\def\R{{\mathbb R}}
\def\C{{\mathbb C}}
\def\Q{{\mathbb Q}}
\def\Z{{\mathbb Z}}
\def\F{{\mathbb F}}
\def\Der{{\hbox{\bf Der}}}
\def\Inn{{\hbox{\bf Inn}}}
\def\Out{{\hbox{\bf Out}}}
\def\Hom{{\hbox{\bf Hom}}}
\def\Ob{{\hbox{\bf Ob}}}
\def\Out{{\hbox{\bf Out}}}
\def\cH{{\cal H}}
\newtheorem{theorem}{Theorem}[section]
\newtheorem{lemma}{Lemma}[section]
\newtheorem{proposition}{Proposition}[section]
\newtheorem{definition}{Definition}[section]
\newtheorem{corollary}{Corollary}[theorem]
\newtheorem*{fact}{Statement}
\DeclareMathOperator{\im}{im}
\newcommand{\bigplus}{%
  \DOTSB\mathop{\mathpalette\mattos@bigplus\relax}\slimits@
}
\newcommand\mattos@bigplus[2]{%
  \vcenter{\hbox{%
    \sbox\z@{$#1\sum$}%
    \resizebox{!}{0.9\dimexpr\ht\z@+\dp\z@}{\raisebox{\depth}{$\m@th#1+$}}%
  }}%
  \vphantom{\sum}%
}
\title{Derivations of group rings for finite and FC groups}
\author{A.A.Arutyunov\footnote{Department of Higher Mathematics, Moscow Institute of Physics and Technology, andronick.arutyunov@gmail.com}\footnote{V.A. Trapeznikov Institute of Control Sciences of RAS,
65 Profsoyuznaya str., Moscow 117997, Russia};
L.M.Kosolapov\footnote{Higher School of Economics ,  Moscow, 101000, Russia, 
leokosol@gmail.com}
\footnote{Skolkovo Institute of Science and Technology, Moscow, 121205, Russia
}}
\begin{document}

\maketitle
\begin{abstract}
In this paper we establish decomposition theorems for derivations of group rings. We provide a topological technique for studying derivations of a group ring $A[G]$ in case $G$ has finite conjugacy classes.  As a result, we describe all derivations of algebra $A[G]$ for the case when $G$ is a finite group, or $G$ is an FC-group.   In addition, we describe an algorithm to explicitly calculate all derivations of a group ring $A[G]$ in case $G$ is finite. As examples, derivations of $\Z_4[S_3]$ and $\F_{2^m}D_{2n}$ are considered. 
\end{abstract}
{\bf Keywords}
{Derivations, group algebras, group rings, finite groups, FC-groups, }

{\bf AMS codes}
13N15, 16S34, 20C07
\section{Introduction}



In this paper we solve the derivation problem for group algebras of finite groups. Our results provide constructive description of derivation algebra $\Der(\F[G])$ for the case when $\F$ is a finite field and $G$ is a finite group. We discuss examples when outer derivations algebra is nontrivial, and provide some criterions for its triviality.

Our theorems admit a natural generalization to the case when $\F$ is an arbitrary commutative unital ring, and $G$ is an FC-group. In these cases our technique provides decomposition theorems for derivation algebra.

Our method opens the way for practical applications of derivation algebras. As an example, the explicit description of derivation algebras is useful for construction of binary codes (see \cite{Creedon-Hughes}).
\subsection{History of the topic and motivation for this research}

In this paper we apply the the character technique approach, which is proposed in  \cite{Arutyunov_1}, to the case when group has finite conjugacy classes. As a consequence, the character technique gives tools to prove a decomposition theorem for $\Der(A[G])$ in case $G$ is an FC-group and $A$ is a commutative unital ring. As a corollary, we prove a theorem describing all inner and outer derivations of a group ring of a finite group. In addition, theorems about character complexes for finite conjugacy classes may be useful for studying Derivation problem for groups which have nontrivial finite conjugacy classes.

Derivations of group rings have been a topic for studies since late 1970s.
Smith (see \cite{Smith}) was one of the first to study the derivations in group rings. In certain papers (see \cite{Burkov, Ferrero_Giambruno_Milies,  Spiegel}) were studied the properties of group rings which have no outer derivations. 

There is a lot of papers concerning derivations of group rings: derivations of group rings and polynomial rings were studied at \cite{Bavula_2, Burkov, Burkov_2,Kharchenko_1, Kiss_Laczkovich}, derivations of particular group algebras (incindence, Grassman, Novikov, nilpotent algebras, etc.) were studied in \cite{ Bai_Meng_He, Bavula_1, Kaygorodov_Popov_1}, and there is a lot of topics connected with derivations of group rings (see \cite{ Lanski_Montgomery, Spiegel}). In recent papers \cite{ Artemovych_2,Artemovych_5, Artemovych_4}, properties of generalized inner derivations, central derivations and derivations of group rings over finite rings were studied.


The derivation problem asks, in essense, whether all derivations in a group ring are inner. This question for Banach algebra $L_1(G)$ was formulated in \cite{Dales} (Q5.6.B, p.746). Results for some special cases can be found in \cite{Johnson-2001,Losert-2008}.

In recent papers \cite{Arutyunov_1,Arutyunov_2,Arutyunov_3,Arutyunov_4, Arutyunov_5,Mis} a categorical method of studying derivations was introduced. The method is based upon a correspondence between derivations and characters (additive functions on groupoid of adjoint action of a group). This method proved to be useful in studying derivation problem (i.e. finding whether outer derivation space is trivial). 

  Application of characters technique for these problems is described in \cite{Arutyunov_1, Arutyunov_2, Arutyunov_3}. Moreover, in recent paper \cite{Arutyunov_5}  character technique proved to be useful for studying $(\sigma,\tau)$-derivations. Derivations, and especially $(\sigma,\tau)$-derivations of group rings have various applications in coding theory \cite{ Boucher-Ulmer, Creedon-Hughes}.

In general, information on group rings, derivations, endomorphisms of group rings, Lie rings (Lie algebras over rings) and Lie algebras can be found in \cite{Bovdi_book,Fuchs_book,Herstein_book,Jacobson, Robinson_book}.

In our paper we prove the decomposition theorems which provide the way to explicitly describe all derivations of a finite group ring. This can be useful for coding theory applications (see \cite{ Boucher-Ulmer, Creedon-Hughes}). The decomposition theorems we prove also give rise to the criterion for solving the problem of outer derivations' existence for finite group rings. In addition, our technique is applicable to FC-group rings, which can be useful for studying their derivations.

\subsection{Main results}

Let $G$ be a noncommutative group, $A$ - commutative unital ring. We study the $A$-module of derivations over a group ring $A[G]$. Note that group ring $A[G]$ comprises of finite sum of elements from $G$ with coefficients from $A$. Thus, endomorphisms of $A[G]$ are linear operators, which map finite sums to finite sums.
\begin{definition}
Operator $d:A[G]\to A[G]$ is called an endomophism of group ring $A[G]$, if:
\begin{itemize}
\item $  d(x+y)=d(x)+d(y)\text{ for } \forall x,y\in A[G],$
\item $ r\in A, d(rx)=rd(x)\text{ for }\forall x\in A[G].$
\end{itemize}
\end{definition}

Of course, by the definition of $A[G]$, any such operator $d$ maps finite sums (of group's elements) to finite sums. 

\begin{definition}
Derivations of $A[G]$ are defined as endomorphisms of $A[G]$ that are subject to the Leibnitz rule. I.e. $d\in \Der(A[G])$, if $d$ is an endomorphism of $A[G]$ and
$$d(xy)=d(x)y+xd(y)\text{ for }\forall x,y\in A[G]$$
\end{definition}

\begin{definition}
Let $a\in A[G]$. Adjoint derivation $\textbf{ad}_a$ acts on any $x\in A[G]$ in the following way:
$$ \textbf{ad}_a(x)= [a,x]\equiv ax-xa $$Inner derivations are defined as usual:
	$$\Inn= \{d =\sum_{a \in G}C_a \textbf{ad}_a \text{ - finite sum }\}$$
    \end{definition}

It is a well-known fact (see \cite{Jacobson}) that $(\Der, [.,.])$ form a Lie algebra with respect to commutator, and $\Inn$ is an ideal in $\Der$. This makes quotient Lie algebra correctly defined:
    $$\Out:=\Der/\Inn.$$

In the article we consider the case when $G$ is a group with finite conjugacy classes -- so called FC-group (see \cite{Robinson_book}). In particular $G$ is a finite group. Also we will consider that $A$ is an unital commutative ring. 

{\bf Theorem \ref{th-FC-decomp}.}
{\it
Let $G$ be a finitely generated FC-group, $A$ be a unital commutative ring. Then 
\begin{equation*}
\Der(A[G])\cong \Inn(A[G])\oplus \bigoplus_{[u]\in G^G} \Hom_{Ab}(Z(u),A).
\end{equation*}}

Here (see definition \ref{def-Hom_ab}) $\Hom_{Ab}$ is the set of additive homomorphisms from the centralizer $Z(u)$ of fixed element $u\in G$ to the ring $A$.

This result can be specified for description of outer derivation whether $G$ is a finite group.

{\bf Theorem \ref{finite_result}.}{\it
There is a way to describe all derivations of $A[G]$ in case $G$ is finite:
\begin{enumerate}
\item $\Der(A[G])\cong\Inn(A[G]) \oplus \big(\bigoplus_{[u]\in G^G}   \Hom_{Ab}(Z(u), A)\big),$
\item $\Out(A[G])\cong \bigoplus_{[u]\in G^G}   \Hom_{Ab}(Z(u), A).$
\end{enumerate}}

Generally speaking algebra of outer derivations isn't trivial. But for special cases spaces $\Hom_{Ab}(Z(u),A)$ can be degenerated. 

First case is easily deduced from well-known results.

{\bf Corollary \ref{corol-torsion-free}.} {\it
If $A$ is a torsion-free ring, $G$ is a finite group, then all derivations are inner $\Out(A[G])=0$. As a consequence, if $A=\Z,\Q,\R,\C$, then $\Out(A[G])=0$.}

But the second case apparently was not previously studied and important for applications.

If the ring $A$ is finite then as an abelian group the following decomposition holds
$$A\cong\Z_{p_1^{i_1}}\oplus ... \oplus\Z_{p_n^{i_n}}.$$
 Also for finite group $G$ we get  
$$G/[G,G]\cong\Z_{q_1^{j_1}}\oplus ... \oplus\Z_{q_m^{j_m}}$$be the primary decomposition of $G/[G,G]$.

{\bf Theorem \ref{th-deriv-problem}.}
{\it For finite group $G$ and finite ring $A$ all derivations $\Der(A[G])$ are inner ($\Der(A[G])=\Inn(A[G])$ )
if and only if  $\{p_1,...,p_n\}\cap\{q_1,...,q_m\}=\emptyset$.}

The results obtained can be used in applications, some of which are described in the section \ref{ex-app}.
In particular we will consider the case of an algebra $\Z_4[S_3]$ (see results of section \ref{ex-z4-s3}) and $\F_{2^m}D_{2n}$ (see section \ref{ex-F2m-D2n}). The last example is important for applications especially for cryptography. So we will give a new combinatorial way to get such results from \cite{Creedon-Hughes,Conrad3,Conrad4}.

\section{Derivations of group rings}

\subsection{Groupoid of adjoint action and characters}

In this section we definte a category of adjoint action similarly to papers \cite{Arutyunov_1, Arutyunov_3}. A groupoid $\Gamma$ based on a noncommutative group G is defined in the following way:
\begin {itemize}
\item $\Ob(\Gamma) = G$
\item $\Hom (a, b) = \{(u, v)\in G \times G | v^{-1}u = a, uv^{-1} = b\}$ for every $a$, $b \in Obj(\Gamma)$
\item Composition of maps $\varphi = (u_1, v_1) \in$ \textbf{Hom} $(a, b)$, $\psi = (u_2, v_2) \in \Hom(b, c)$ is a map $\varphi \circ \psi \in \Hom(a, c)$ such that:
\begin{center}
	$\varphi \circ \psi = (u_2v_1, v_2v_1)$
\end{center}

\end {itemize}

Hereinafter for $x\in G$, the corresponding conjugacy class is $[x] = \{g x g^{-1}|g\in G\}$.

\begin{definition}
Define $\Gamma_{[u]}$ as a subgroupoid of $\Gamma$, such that:
\begin{itemize}
\item $\Ob(\Gamma_{[u]})=[u]$
\item $\Hom(\Gamma_{[u]})=\{\varphi\in \Hom(\Gamma)|t(\varphi)\in [u]\}$.
\end{itemize}
In other words, $\Gamma_{[u]}$ is a full subcategory in $\Gamma$.
\end{definition}

Denote $G^G=\{[u_1],\ldots, [u_n]\}$ as a set of conjugacy classes in $G$.  

\begin{fact} It is easy to check by direct calculation that:
\begin{itemize}
\item Two objects are connected in $\Gamma$ iff they are conjugated elements of $G$;
\item For any morphism $\varphi\in \Hom(\Gamma)$, we have $t(\varphi)\in [u]\iff s(\varphi)\in [u]$;
\item  For any conjugacy class $[u]$ in $G^G$, the corresponding subgroupoid $\Gamma_{[u]}$ is a connected component in $\Gamma$;
\item Hence $\Gamma$ can be represented as a disjoint union
$$\Gamma = \bigsqcup_{[u] \in G^G} \Gamma_{[u]}.$$
\end{itemize}
\end{fact}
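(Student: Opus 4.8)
The plan is to reduce all four bullet points to a single explicit computation that describes exactly when $\Hom(a,b)$ is nonempty. First I would take a morphism $\varphi = (u,v) \in \Hom(a,b)$ and unpack the defining relations $v^{-1}u = a$ and $uv^{-1} = b$. Solving the first relation as $u = va$ and substituting into the second gives $b = vav^{-1}$, so the mere existence of a morphism $a \to b$ forces $b$ to be conjugate to $a$. Conversely, given $b = vav^{-1}$ for some $v \in G$, setting $u := va$ produces a pair $(u,v)$ satisfying both relations, hence a genuine element of $\Hom(a,b)$. This establishes the first bullet: two objects are joined by a morphism, and therefore (since $\Gamma$ is a groupoid, where connectivity means existence of a chain of morphisms) lie in the same connected component, precisely when they are conjugate in $G$.

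The remaining bullets then follow formally. For the second, I would observe that any $\varphi$ has $s(\varphi)$ and $t(\varphi)$ conjugate by the computation above; since membership in a conjugacy class $[u]$ is conjugation-invariant, $t(\varphi) \in [u] \iff s(\varphi) \in [u]$. For the third, the objects of $\Gamma_{[u]}$ form the single class $[u]$, whose elements are pairwise conjugate and hence pairwise connected, while by the first bullet no object of $[u]$ is connected to any object outside $[u]$; thus $\Gamma_{[u]}$ is exactly a connected component. The fourth bullet is the assembly step: the conjugacy classes partition $\Ob(\Gamma) = G$, every morphism has source and target in the same class, and each class is a connected component, so $\Gamma$ is the disjoint union $\bigsqcup_{[u] \in G^G} \Gamma_{[u]}$.

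I do not anticipate a genuine obstacle, since the statement is combinatorial bookkeeping once the source and target relations are inverted. The only point demanding slight care is the direction convention for $s$ and $t$ (which of $a$, $b$ is the source versus the target) together with the consistency of the composition rule $\varphi \circ \psi = (u_2 v_1, v_2 v_1)$ with these relations. I would verify once that composing $\varphi \in \Hom(a,b)$ with $\psi \in \Hom(b,c)$ indeed lands in $\Hom(a,c)$ — a direct check reduces to $v_1^{-1}(u_1 v_1^{-1})v_1 = v_1^{-1}u_1 = a$ and $u_2 v_2^{-1} = c$ — thereby confirming that connectivity is transitive and that the decomposition into subgroupoids is closed under composition.
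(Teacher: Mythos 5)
Your proof is correct and is precisely the ``direct calculation'' the paper alludes to: inverting the defining relations $v^{-1}u=a$, $uv^{-1}=b$ to get $b=vav^{-1}$, and conversely constructing $(va,v)\in\Hom(a,b)$ from a conjugating element $v$, after which the remaining three bullets follow by conjugation-invariance and partitioning into classes; the paper itself writes out no argument, deferring instead to \cite{Arutyunov_3, Arutyunov_5}. Your extra check that $\varphi\circ\psi=(u_2v_1,v_2v_1)$ indeed lands in $\Hom(a,c)$ is also correct and closes the only convention-sensitive point.
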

Proof of these facts can be found in previous papers \cite{Arutyunov_3, Arutyunov_5}. 

The next definition is important (see \cite{Arutyunov_1}, section 3).
\begin{definition}
A function $\chi:\Hom(\Gamma) \to A$, such that $$\chi(\varphi\circ\psi)=\chi(\varphi)+\chi(\psi),$$is called a character on $\Gamma$. We denote the space (actually, an $A$-module) of such characters on $\Gamma$ as $X_1(\Gamma)$.
\end{definition}
Generally, this is not the same as the space of locally-finite characters.
\subsection{Character modules and derivations}
Let $d$ be a derivation of $A[G]$. 

For any basis element $g$ in the group algebra $A[G]$, we have

\begin{equation}
d(g) = \sum\limits_{h\in G}d_g^h h
\end{equation}
where $d^g_h\in A$ -- coefficients that depend only on the derivation $d$.
An arbitrary element $u\in A[G]$ can be represented as a finite sum $u = \sum\limits_{g\in G}\lambda^g g$, with coefficients $\lambda^g\in A$. Then the element $d(u)$ can be represented as 
\begin{equation}
\label{eqDER}
  d(u) =\sum\limits_{g\in G}\lambda^g d(g)= \sum\limits_{g\in G}\sum\limits_{h\in G} \lambda^g d^h_g h.
\end{equation}

Following \cite{Arutyunov_1,Arutyunov_3} the correspondence between derivations and characters is given by the usual definition:
\begin{equation} 
\label{d^g_h}
  \chi_d ((u,v)) = d^u_v.
\end{equation}
If $d\in \Der$, then $\forall g, d(g)=\sum d_g^h h =\sum \chi((h,g)) h$ is a finite sum. So we can say, that a character $\chi$ corresponds to a derivation, if and only if  for each $g$ exists just a finite number of elements $h$, such that $\chi((h,g))\neq 0$. In other words, $\forall g,\chi((h,g))= 0 $ for cofinitely many $h$'s. Such a character is called a locally finite character.

\begin{definition}
Following papers \cite{Arutyunov_1, Arutyunov_3}, a 1-character $\chi$ such that for every $v\in G$
  $$
    \chi(x,v) = 0
  $$
for almost all $x\in G$ is called locally finite.
\end{definition}
In other words, a character $\chi$ is locally finite, if for any $g\in G$, there is only a finite number of elements $x\in G$, such that $\chi(x,v)\neq 0$.
\begin{definition}
Define $X_1^{fin}$ as a set of locally finite 1-characters on the $\Gamma$.

\end{definition}

As an $A$-module, $X_1^{fin}$ is obviously isomorphic to $\Der$. However, we need to establish a Lie algebra isomorphism between $X_1^{fin}$ and $\Der$. 

\begin{theorem} 
If $d_1,d_2\in \Der$, then their commutator $[d_1, d_2]$  corresponds to 1-character $\chi_{[d_1, d_2]} $, which can be calculated by this formula
\begin{equation}
	\chi_{[d_1, d_2]}(a, g)  = \sum\limits_{h\in G} \chi_{d_1}(a, h)\chi_{d_2}(h, g) - \chi_{d_2}(a, h)\chi_{d_1}(h, g).
\end{equation}
\end{theorem}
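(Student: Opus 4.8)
The plan is to carry out the entire computation at the level of linear operators (endomorphisms of $A[G]$) and only at the end read off matrix coefficients, exploiting the fact that the correspondence $\chi_d(u,v)=d^u_v$ identifies a derivation with its table of coefficients in the basis $G$. First I would record, for any endomorphism $d$ and any basis element $g$, the expansion $d(g)=\sum_{a\in G} d^a_g\,a=\sum_{a\in G}\chi_d(a,g)\,a$. Since $\Inn$ is an ideal and $(\Der,[\cdot,\cdot])$ is a Lie algebra (as recalled above following \cite{Jacobson}), the commutator $[d_1,d_2]=d_1d_2-d_2d_1$ is again a derivation and hence corresponds to a locally finite $1$-character; it therefore suffices to identify the coefficients $([d_1,d_2])^a_g$.

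The computational heart is to evaluate the operator product $d_1d_2$ on a basis element $g$. I would apply $d_2$ first, expand $d_2(g)=\sum_{h}\chi_{d_2}(h,g)\,h$, then apply the linear operator $d_1$ termwise using $d_1(h)=\sum_{a}\chi_{d_1}(a,h)\,a$. Collecting the coefficient of $a$ yields
\[
(d_1d_2)^a_g=\sum_{h\in G}\chi_{d_1}(a,h)\,\chi_{d_2}(h,g).
\]
At this point I would check that the inner sum is in fact finite: since $\chi_{d_2}$ is locally finite, for the fixed $g$ only finitely many $h$ satisfy $\chi_{d_2}(h,g)\neq 0$, so the convolution is well defined. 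The symmetric computation gives $(d_2d_1)^a_g=\sum_h\chi_{d_2}(a,h)\,\chi_{d_1}(h,g)$, and subtracting the two expressions produces exactly the claimed formula for $\chi_{[d_1,d_2]}(a,g)=([d_1,d_2])^a_g$.

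I expect the only genuine obstacle to be the bookkeeping of indices and composition order: keeping straight that $(a,h)$ and $(h,g)$ are the morphisms carrying the coefficients $d^a_h$ and $d^h_g$, and that applying $d_2$ before $d_1$ is what produces the first term $\chi_{d_1}(a,h)\chi_{d_2}(h,g)$, matching $d_1d_2$ in $[d_1,d_2]=d_1d_2-d_2d_1$. I would also stress that although the individual products $d_1d_2$ and $d_2d_1$ are merely endomorphisms and need not be derivations, their coefficients are still well defined and the computation above is valid at the endomorphism level; it is only their difference that is guaranteed to be a derivation, and hence to define a $1$-character. With these points settled, the additivity $\chi(\varphi\circ\psi)=\chi(\varphi)+\chi(\psi)$ need not be verified by hand, as it follows automatically from $[d_1,d_2]$ being a derivation under the established correspondence.
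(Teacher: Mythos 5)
Your proposal is correct and follows essentially the same route as the paper's own proof: expand $d_2(g)=\sum_h\chi_{d_2}(h,g)h$, apply $d_1$ termwise, collect the coefficient of $a$, and subtract the symmetric expression for $d_2d_1$. Your added observations — that the convolution sum is finite by local finiteness of $\chi_{d_2}$, and that $d_1d_2$ alone is only an endomorphism while the character property of the difference follows from $[d_1,d_2]$ being a derivation — are refinements the paper leaves implicit, but the argument is the same.
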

Denote $$\{\chi_{d_1}, \chi_{d_2}\}:=	\chi_{[d_1, d_2]}.$$
\begin{proof}
	Let $g \in G$, then we have
    	$$d_1(g) = \sum\limits_{h\in G} \chi_{d_1}(h, g)h,\quad
        d_2(g) = \sum\limits_{h\in G} \chi_{d_2}(h, g)h.$$
        We are going to calculate the commutator $[d_1, d_2] = d_1d_2 - d_2d_1$. It's easy to see that
    
        $$ d_1d_2(g) = \sum\limits_{h\in G} \chi_{d_2}(h, g)(\sum\limits_{a\in G} \chi_{d_1}(a, h)a),$$
        $$ d_2d_1(g) = \sum\limits_{h\in G} \chi_{d_1}(h, g)(\sum\limits_{a\in G} \chi_{d_2}(a, h)a).$$
    
    Changing the sum order in these expressions we get
    
    \begin{center}
        $[d_1, d_2](h) = \sum\limits_{a\in G} (\sum\limits_{h\in G} \chi_{d_2}(h, g)\chi_{d_1}(a, h) - \chi_{d_1}(h, g)\chi_{d_2}(a, h))a.$
    \end{center}
    
    The formula for $\{\chi_{d_1}, \chi_{d_2}\}(a, h)$ is a coefficient of $a$, i.e.
    
    \begin{center}
    	$\{\chi_{d_1}, \chi_{d_2}\}(a, g) = \sum\limits_{h\in G} \chi_{d_1}(a, h)\chi_{d_2}(h, g) - \chi_{d_2}(a, h)\chi_{d_1}(h, g)$.
    \end{center}
    
\end{proof}
This theorem is an analogue of Proposal 4.3 from \cite{Arutyunov_4}.
\begin{corollary}
  The formula \eqref{d^g_h} defines the canonical isomorphism
$$(\Der, [\cdot,\cdot])\cong(X_1^{fin},\{\cdot,\cdot \}).$$ 
\end{corollary}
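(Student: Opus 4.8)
The plan is to exhibit the explicit map $\Phi\colon \Der \to X_1^{fin}$ given by formula~\eqref{d^g_h}, namely $d \mapsto \chi_d$ with $\chi_d((u,v)) = d_v^u$, and to verify in turn that it is well defined, $A$-linear, bijective, and bracket-preserving. Since the preceding theorem already establishes $\chi_{[d_1,d_2]} = \{\chi_{d_1}, \chi_{d_2}\}$, the bracket-preserving property comes for free, and the substance of the argument lies in checking that $\Phi$ is a bijection of $A$-modules onto exactly the locally finite characters.

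First I would check that $\chi_d$ is a genuine locally finite character. Local finiteness is immediate: since $d(g) = \sum_{h} d_g^h h$ lies in $A[G]$, only finitely many $h$ have $d_g^h \neq 0$, so $\chi_d((h,g)) = 0$ for almost all $h$. For the additivity (character) condition I would translate the Leibniz rule into the language of the groupoid $\Gamma$. Comparing the coefficient of a fixed group element $w$ on both sides of $d(gh) = d(g)h + g\,d(h)$ yields
$$\chi_d((w, gh)) = \chi_d((wh^{-1}, g)) + \chi_d((g^{-1}w, h)).$$
The key observation is that, taking $\varphi = (g^{-1}w, h)$ and $\psi = (wh^{-1}, g)$, these morphisms are composable and, by the rule $\varphi \circ \psi = (u_2 v_1, v_2 v_1)$, their composite is exactly $(w, gh)$. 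Hence the Leibniz identity is literally the character condition $\chi_d(\varphi \circ \psi) = \chi_d(\varphi) + \chi_d(\psi)$.

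Next I would establish that $\Phi$ is an $A$-module isomorphism. $A$-linearity is clear from $(d_1 + d_2)(g) = d_1(g) + d_2(g)$ and $(rd)(g) = r\,d(g)$, so the coefficients add and scale accordingly. Injectivity follows since $\chi_d = 0$ forces $d(g) = 0$ on every basis element, hence $d = 0$ by $A$-linearity. For surjectivity I would take an arbitrary locally finite character $\chi$, define $d$ on basis elements by $d(g) = \sum_{h} \chi((h,g))\, h$ (a finite sum, by local finiteness) and extend $A$-linearly to $A[G]$; running the coefficient computation of the previous paragraph in reverse shows that the character condition for $\chi$ forces the Leibniz rule for $d$, so $d \in \Der$ and $\Phi(d) = \chi$.

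Finally, combining the $A$-module isomorphism with the bracket identity $\chi_{[d_1,d_2]} = \{\chi_{d_1}, \chi_{d_2}\}$ from the preceding theorem shows that $\Phi$ is an isomorphism of Lie algebras, which is the claim. I expect the main obstacle to be the bookkeeping in the first step: correctly matching the source/target conventions of $\Gamma$ and the composition rule $(u_2 v_1, v_2 v_1)$ against the two Leibniz terms. Once composability and the value $(w, gh)$ of the composite are verified, the remaining steps are purely formal.
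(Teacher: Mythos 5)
Your proposal is correct and takes essentially the same route as the paper: the bracket-preservation is exactly the preceding theorem, and the $A$-module correspondence $d\leftrightarrow\chi_d$ is the part the paper declares ``obvious'' (deferring to the cited earlier work), which you verify directly, including the key point that the Leibniz rule is precisely additivity of $\chi_d$ under the composition $(g^{-1}w,h)\circ(wh^{-1},g)=(w,gh)$. Nothing further is needed.
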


\subsection{Inner derivations and their characters}
Recall that 	$$\Inn= \{d =\sum_{a \in G}C_a \textbf{ad}_a \text{ - finite sum }\}.$$

The next statement follows from \cite{Arutyunov_1}, Theorem 2.  
\begin{fact}
An adjoint derivation $\textbf{ad}_a: x\mapsto [a,x]$ corresponds to the character

\begin{equation}
\chi_{\textbf{ad}_a}((h,g))=\begin{cases}
0, &  g^{-1}h=hg^{-1}=a\\
1, & hg^{-1} = a, g^{-1}h \neq a\\
-1, &   g^{-1}h=a,hg^{-1}\neq a\\
0, & \text{else}
\end{cases}
\end{equation}

\end{fact}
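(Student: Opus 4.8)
The plan is to compute the image of a basis element under $\textbf{ad}_a$ directly and then read off its character via the defining relation \eqref{d^g_h}. Recall that for a basis element $g\in G$ the coefficients of a derivation are recovered from $\textbf{ad}_a(g)=\sum_{h\in G}\chi_{\textbf{ad}_a}((h,g))\,h$, so it suffices to expand $\textbf{ad}_a(g)$ as an element of $A[G]$ and identify the coefficient of each $h\in G$.

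First I would write $\textbf{ad}_a(g)=[a,g]=ag-ga$. Since $a,g\in G$ and $G$ is a group, both $ag$ and $ga$ are single basis elements of $A[G]$, so the expansion of $\textbf{ad}_a(g)$ has at most two nonzero terms. The coefficient of $h$ in $\textbf{ad}_a(g)$ is therefore the indicator of $h=ag$ minus the indicator of $h=ga$, and by definition this coefficient equals $\chi_{\textbf{ad}_a}((h,g))$.

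Next I would rewrite the two conditions in terms of the source and target of the morphism $(h,g)\in\Hom(g^{-1}h,\,hg^{-1})$. Multiplying $h=ag$ on the right by $g^{-1}$ gives $hg^{-1}=a$ (the target of $(h,g)$), while multiplying $h=ga$ on the left by $g^{-1}$ gives $g^{-1}h=a$ (its source). The case analysis then splits exactly as in the claimed formula: if $hg^{-1}=a=g^{-1}h$ then $ag=ga=h$ and the two terms cancel, yielding $0$; if only $hg^{-1}=a$ holds we obtain $+1$; if only $g^{-1}h=a$ holds we obtain $-1$; and otherwise neither term contributes, giving $0$.

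There is no serious obstacle here, since the whole argument is a one-line expansion followed by bookkeeping. The only point demanding care is the degenerate case $ag=ga$ (equivalently, $a$ commutes with $g$), where the $+1$ and $-1$ contributions land on the same basis element $h$ and must be summed to $0$ rather than recorded separately; isolating this case is precisely what forces the formula to have a distinct first branch. It is also worth noting, as a sanity check, that the resulting character is supported only on morphisms whose source or target equals $a$, which is consistent with $\textbf{ad}_a$ being an inner derivation attached to the single element $a$.
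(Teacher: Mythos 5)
Your proof is correct and complete: expanding $\textbf{ad}_a(g)=ag-ga$, reading off the coefficient of $h$ as $\chi_{\textbf{ad}_a}((h,g))$ per the paper's convention $d(g)=\sum_h \chi_d((h,g))\,h$, translating $h=ag$ and $h=ga$ into $hg^{-1}=a$ and $g^{-1}h=a$, and isolating the commuting case where the two contributions cancel is exactly the right bookkeeping, and your identification of these conditions with the target and source of $(h,g)\in\Hom(g^{-1}h,\,hg^{-1})$ is consistent with the groupoid's definition. The paper itself gives no proof of this statement (it only cites Theorem 2 of an earlier work), so your self-contained computation is the natural argument and there is no internal proof to diverge from.
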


Now we can write down definitions of inner derivation submodule in terms of characters. 
\begin{definition}
	$$\Inn= \{d\in \Der| \chi_d=\sum_a C_a \chi_{\textbf{ad}_a} \text{ - finite sum}\}$$

\end{definition}
We can clearly see that character of any $d\in \Inn$ is trivial on loops. 
\begin{fact}\label{character_properties}
Here are some basic facts about how trivial-on-loops characters on $\Gamma$ behave:
\begin{itemize}
\item If a character on a groupoid is trivial on loops, then it is constant on any $\Hom(a,b)$;
\item If a character $\chi$ on a groupoid is trivial on loops, then it is a potential character, i.e. $\exists p: \Ob(\Gamma)\to A=\C: \chi= \partial p$. The latest term means that $\forall \varphi\in \Hom(\Gamma), \chi(\varphi)=p(t(\varphi))-p(s(\varphi))$;
\item For any identity morphism $(a,e)$ its character $\chi(a,e)=0$;
\item If $\alpha\in \Hom(a,a), \beta\in \Hom(b,b), \exists \varphi\in \Hom(a,b): \alpha=\varphi^{-1}\circ\beta\circ\varphi$, then $\forall \chi\in X_1$, $\chi(\alpha)=\chi(\beta)$.
\end{itemize}
These facts are all quite obvious and their proofs are neglected for the sake of conciseness.
\end{fact}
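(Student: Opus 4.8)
The plan is to reduce all four items to two elementary consequences of additivity and then treat each bullet in turn. The two basic facts I would establish first are: (i) every identity morphism has character zero, and (ii) the character of an inverse is the negative of the character. For (i), note from the composition rule that $(a,e)\circ(a,e)=(a,e)$, so additivity gives $\chi(a,e)=2\chi(a,e)$, whence $\chi(a,e)=0$; this already disposes of the third bullet and, crucially, requires no hypothesis on loops. For (ii), since $\Gamma$ is a groupoid, $\varphi\circ\varphi^{-1}$ is an identity morphism, so $\chi(\varphi)+\chi(\varphi^{-1})=0$, i.e. $\chi(\varphi^{-1})=-\chi(\varphi)$ for every $\chi\in X_1(\Gamma)$.

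For the first bullet (constancy on $\Hom(a,b)$) I would argue as follows: suppose $\chi$ is trivial on loops and let $\varphi,\psi\in\Hom(a,b)$. Then $\varphi\circ\psi^{-1}$ is a loop at $a$ (its source and target are both $a$), so $0=\chi(\varphi\circ\psi^{-1})=\chi(\varphi)+\chi(\psi^{-1})=\chi(\varphi)-\chi(\psi)$ by (ii); hence $\chi(\varphi)=\chi(\psi)$. This is the only place where triviality on loops is genuinely used, and it is the fact that feeds the potential construction.

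For the second bullet (potentiality) I would build $p:\Ob(\Gamma)\to A$ componentwise. In each connected component of $\Gamma$ fix a base object $a_0$, set $p(a_0)=0$, and for every other object $b$ in that component choose a morphism $\varphi_b\in\Hom(a_0,b)$ and put $p(b)=\chi(\varphi_b)$. Bullet one guarantees this value is independent of the chosen $\varphi_b$, so $p$ is well defined. To verify $\chi=\partial p$, take any $\psi\in\Hom(b,c)$: the composite $\varphi_b\circ\psi$ lies in $\Hom(a_0,c)$, so by bullet one and additivity $\chi(\varphi_b)+\chi(\psi)=\chi(\varphi_b\circ\psi)=p(c)=p(b)+\chi(\psi)$ rearranges to $\chi(\psi)=p(c)-p(b)$. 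The restriction $A=\C$ in the statement is inessential, since no division is used; the construction works over any commutative $A$, needing only a choice of base objects.

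For the fourth bullet (conjugation invariance), which holds for every $\chi\in X_1(\Gamma)$ with no loop hypothesis, if $\alpha=\varphi^{-1}\circ\beta\circ\varphi$ then additivity applied twice together with (ii) gives $\chi(\alpha)=\chi(\varphi^{-1})+\chi(\beta)+\chi(\varphi)=\chi(\beta)+\bigl(\chi(\varphi)+\chi(\varphi^{-1})\bigr)=\chi(\beta)$, where commutativity of the additive group $A$ lets me regroup the three terms freely. The only real obstacle throughout is keeping the somewhat unusual source/target conventions of $\Gamma$ straight — in particular confirming that $\varphi\circ\psi^{-1}$ and $\varphi\circ\varphi^{-1}$ really are loops at the expected object — together with the well-definedness of $p$, which rests squarely on bullet one; everything else is a one-line application of additivity, which is why the authors can reasonably call these facts obvious.
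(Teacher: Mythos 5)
Your proof is correct: all four items follow exactly as you argue, and the key verifications (that $(a,e)\circ(a,e)=(a,e)$, that $\varphi\circ\varphi^{-1}$ is the identity $(a,e)$, and that $\varphi\circ\psi^{-1}$ is a loop for $\varphi,\psi\in\Hom(a,b)$) all check out against the paper's composition rule. The paper itself omits any proof, declaring the facts obvious, and where it does carry out such an argument later --- the potential construction in the proof of Lemma~\ref{weakly_inner}, $(1\Rightarrow 2)$ --- it uses the same componentwise base-point construction you give, so your write-up is essentially the proof the authors had in mind, with the added (correct) observation that the restriction $A=\C$ in the statement is unnecessary.
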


We have already said above that a trivial-on-loops character may not have its corresponding derivation be inner. As an example, in some cases (infinite $G$, $A=\C$) there are locally finite trivial-on-loops characters, which do not correspond to an inner derivation. Counterexamples can be found in \cite{Arutyunov_2} (result for nilpotent groups in section 3.2 and more specific result for Heisenberg group in section 3.3).

\begin{definition}
Let us define a submodule of derivations, such that their characters are trivial on loops:
$$\Inn^*= \{d\in \Der| \chi_d\text{ is trivial on loops} \}$$
We call such derivations \textbf{weakly inner}. \end{definition}
This definition was proposed in \cite{Arutyunov_5} (see section 2.5) where this type of derivations was called quasi-inner.

It's easy to see that all inner derivation are weakly inner
 $$\Inn\subseteq \Inn^*.$$
 Converse is not true (see \cite{Arutyunov_2}, results of section 3.3).
\begin{definition}

We introduce definitions of character modules consisting only of trivial-on-loops characters:

$$X_0=\{\chi\in X_1|\chi \text{ is trivial on loops}\};$$
$$X^{fin}_0=\{\chi\in X_1|\chi \text{ is locally finite and trivial on loops}\}.$$
    
\end{definition}
Immediately from definition we get that $(X_0^{fin}, \{\cdot,\cdot\})$ is a Lie algebra. And moreover $(X_0^{fin}, \{\cdot,\cdot\})\cong (\Inn^*,[\cdot,\cdot])$. 

The following theorem is an analogue of the Theorem 2.2. for semigroup algebras from \cite{Arutyunov_5}.

\begin{lemma}\label{weakly_inner}
These conditions on character are equivalent:
\begin{enumerate}
\item $\chi $ is trivial on loops;
\item $\chi $ is potential;
\item $\chi$ can be expressed as a (possibly infinite) sum of adjoint derivations' characters: $ \chi=\sum_{g \in G}C_g \chi_{\textbf{ad}_g}, C_g\in A$.
\end{enumerate}

\end{lemma}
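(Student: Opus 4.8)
The plan rests on one computation: each adjoint character $\chi_{\textbf{ad}_a}$ is itself potential, with potential the indicator function $\delta_a:G\to A$ defined by $\delta_a(x)=1$ for $x=a$ and $\delta_a(x)=0$ otherwise. Recalling that a morphism $\varphi=(h,g)$ has source $s(\varphi)=g^{-1}h$ and target $t(\varphi)=hg^{-1}$, I would read off the four cases of the defining formula for $\chi_{\textbf{ad}_a}$ and check directly that in every case $\chi_{\textbf{ad}_a}(\varphi)=\delta_a(t(\varphi))-\delta_a(s(\varphi))$; that is, $\chi_{\textbf{ad}_a}=\partial\delta_a$. For instance, the two loop cases $hg^{-1}=g^{-1}h=a$ and $hg^{-1}=g^{-1}h\neq a$ both give $0=\delta_a(t(\varphi))-\delta_a(s(\varphi))$, matching the table. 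This identity is the hinge of the whole argument.

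I would then establish $(1)\Leftrightarrow(2)$. The implication $(2)\Rightarrow(1)$ is immediate: if $\chi=\partial p$, then on a loop $\alpha$ we have $s(\alpha)=t(\alpha)$, so $\chi(\alpha)=p(t(\alpha))-p(s(\alpha))=0$. For $(1)\Rightarrow(2)$ I would build the potential $p$ one connected component at a time. By the Statement above, the components of $\Gamma$ are exactly the subgroupoids $\Gamma_{[u]}$ indexed by conjugacy classes. In each component fix a base object $u_0$, set $p(u_0)=0$, and for any other object $b$ choose—by connectedness—a morphism $\varphi\in\Hom(u_0,b)$ and put $p(b):=\chi(\varphi)$. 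This is well defined: a second choice $\psi\in\Hom(u_0,b)$ yields a loop $\varphi\circ\psi^{-1}\in\Hom(u_0,u_0)$, so $\chi(\varphi)-\chi(\psi)=\chi(\varphi\circ\psi^{-1})=0$ by triviality on loops. Finally, for arbitrary $\varphi\in\Hom(a,b)$ pick $\alpha\in\Hom(u_0,a)$; additivity on $\alpha\circ\varphi\in\Hom(u_0,b)$ gives $p(b)=\chi(\alpha)+\chi(\varphi)=p(a)+\chi(\varphi)$, i.e. $\chi(\varphi)=p(t(\varphi))-p(s(\varphi))$. This is exactly the generalization of the second bullet of Fact \ref{character_properties} from $A=\C$ to arbitrary $A$.

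The equivalence $(2)\Leftrightarrow(3)$ is then essentially formal via the hinge identity. For any family $(C_g)_{g\in G}$ in $A$ the expression $\sum_g C_g\chi_{\textbf{ad}_g}$ is automatically a well-defined character, since evaluating at a morphism $\varphi$ collapses to at most the two terms $g=t(\varphi)$ and $g=s(\varphi)$, namely $\big(\sum_g C_g\chi_{\textbf{ad}_g}\big)(\varphi)=C_{t(\varphi)}-C_{s(\varphi)}=\partial C(\varphi)$ where $C(g):=C_g$. Hence condition (3) says precisely $\chi=\partial C$, which is condition (2); conversely $\chi=\partial p$ gives $\chi=\sum_g p(g)\chi_{\textbf{ad}_g}$ by taking $C_g:=p(g)$, a possibly infinite sum as allowed. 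In particular $(3)\Rightarrow(1)$ drops out, since each $\chi_{\textbf{ad}_g}=\partial\delta_g$ vanishes on loops and therefore so does any such sum.

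The one genuinely delicate point I expect is the well-definedness of $p$ in $(1)\Rightarrow(2)$: one must verify independence of the connecting morphisms uniformly across each (possibly infinite) connected component, and keep the diagrammatic composition convention $\varphi\circ\psi$ straight throughout. Everything else reduces to the bookkeeping identity $\chi_{\textbf{ad}_a}=\partial\delta_a$, after which the three conditions become three readings of the single statement $\chi=\partial p$.
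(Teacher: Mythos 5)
Your proposal is correct and follows essentially the same route as the paper: a component-by-component construction of the potential $p$ for $(1)\Rightarrow(2)$, and the observation that a possibly infinite sum $\sum_{g\in G} C_g \chi_{\textbf{ad}_g}$ collapses at each morphism $\varphi$ to $C_{t(\varphi)}-C_{s(\varphi)}$, identifying the coefficients $C_g$ with the values of the potential, for $(2)\Leftrightarrow(3)$, with $(3)\Rightarrow(1)$ immediate. Your explicit hinge identity $\chi_{\textbf{ad}_a}=\partial\delta_a$, the loop-based check that $p$ is well defined, and the final verification that $\chi=\partial p$ on arbitrary morphisms only spell out details the paper delegates to Fact~\ref{character_properties}.
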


\begin{proof}
\textit{(1$\implies$2)}

If $\chi $ is trivial on loops, then for any $[u]\in G^G$, fix $u_0\in [u]$, and choose any value of $p(u_0)$, for example, $p(u_0)=0$. Then we can define potential $p$ on $\Gamma_{[u]}$ through the following procedure:
if $\varphi: u_0\to u, u\in [u_0]$, then $p(u)=p(u_0)+\chi(\varphi)$. On one hand, if such $\varphi$ exists, then $p(u)$ does not depend on the choice of $\varphi$ due to properties of trivial on loops characters. On the other hand, $\exists g: u=gu_0g^{-1} $. Hence $\exists  (gu_0,g): u_0\to u$, which is an example of such $\varphi$ above.

As a result, we obtain $p$ defined on the whole $\Gamma$, and $\chi=\partial (p)$.

\textit{(2$\implies$3)}

We can clearly see now that $\forall \varphi \in \Hom(\Gamma)$
$$\chi_d(\varphi)=p(t(\varphi))\chi_{\textbf{ad}_{t(\varphi)}}(\varphi)+\chi_{\textbf{ad}_{s(\varphi)}}p(s(\varphi))=p(t(\varphi))-p(s(\varphi)).$$
This gives us the correspondence between $C_g$ and the values of $p$.

\textit{(3$\implies$1)}

$\forall g, \chi_{\textbf{ad}_{g}}$ is trivial on loops. Any linear combination of trivial-on-loops characters is trivial on loops.
\end{proof}

\begin{lemma}
$\Inn^*$ is an ideal in $\Der$.
\end{lemma}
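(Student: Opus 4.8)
The plan is to prove the stronger statement $[\Der,\Inn^*]\subseteq \Inn^*$. Since $\Inn^*$ is by definition a submodule of $\Der$ and is already known to be closed under the bracket (it is isomorphic as a Lie algebra to $(X_0^{fin},\{\cdot,\cdot\})$), establishing $[\Der,\Inn^*]\subseteq\Inn^*$ is exactly what is needed to conclude that it is an ideal.

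The first step is to settle the case of a single adjoint derivation. For any $d\in\Der$ and any $a\in G$, a direct application of the Leibniz rule yields the classical identity
\[
[d,\textbf{ad}_a]=\textbf{ad}_{d(a)},
\]
obtained by expanding $d(ax-xa)$ and cancelling the two mixed terms $a\,d(x)$ and $d(x)\,a$. Since $d(a)=\sum_b\mu_b b$ is a \emph{finite} element of $A[G]$, the derivation $\textbf{ad}_{d(a)}=\sum_b\mu_b\,\textbf{ad}_b$ is inner, and so, by the explicit formula for $\chi_{\textbf{ad}_\bullet}$, its character $\chi_{[d,\textbf{ad}_a]}$ is trivial on loops. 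In particular $\{\chi_d,\chi_{\textbf{ad}_a}\}$ vanishes on every loop of $\Gamma$.

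The second step passes from adjoint characters to an arbitrary weakly inner derivation $d^*\in\Inn^*$. By Lemma \ref{weakly_inner} we may write $\chi_{d^*}=\sum_{g\in G}C_g\,\chi_{\textbf{ad}_g}$; note that for a fixed morphism $(u,v)$ this expansion has at most two nonzero terms, since $\chi_{\textbf{ad}_g}(u,v)\neq 0$ forces $g\in\{uv^{-1},v^{-1}u\}$, so the sum is pointwise finite and no convergence issue arises. Fixing a loop $\alpha=(a,g)$ (that is, $ag=ga$) and using $\chi_{[d,d^*]}=\{\chi_d,\chi_{d^*}\}$, I substitute this expansion into the defining expression $\sum_h \chi_d(a,h)\chi_{d^*}(h,g)-\chi_{d^*}(a,h)\chi_d(h,g)$. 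By local finiteness of $\chi_d$ and of $\chi_{d^*}$ the sum over $h$ has finite support, and combined with the at-most-two-terms bound on the adjoint expansion the whole expression becomes a finite double sum; hence I may interchange summations to get $\chi_{[d,d^*]}(\alpha)=\sum_g C_g\,\{\chi_d,\chi_{\textbf{ad}_g}\}(\alpha)$. By Step 1 every term vanishes on the loop $\alpha$, so $\chi_{[d,d^*]}$ is trivial on loops, i.e. $[d,d^*]\in\Inn^*$.

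The only delicate point, and the one I expect to require care, is precisely this manipulation of the possibly infinite expansion of $\chi_{d^*}$: one must justify substituting $\sum_g C_g\chi_{\textbf{ad}_g}$ into the bilinear bracket and then pulling the summation outside. This is exactly where the two finiteness facts are used — local finiteness of $\chi_d$ and $\chi_{d^*}$ cuts the $h$-summation down to finitely many nonzero terms along a loop, while the pointwise finiteness of the adjoint expansion keeps the inner summation finite — after which bilinearity of the bracket and the observation that a sum of trivial-on-loops characters is again trivial on loops complete the argument. (One could instead substitute the potential form $\chi_{d^*}=\partial p$ and verify the vanishing on loops by a direct computation, but that route is noticeably more cumbersome.)
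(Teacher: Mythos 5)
Your proof is correct and takes essentially the same route as the paper's: both rest on the identity $[d,\textbf{ad}_a]=\textbf{ad}_{d(a)}$ together with the expansion $\chi_{d^*}=\sum_{g}C_g\chi_{\textbf{ad}_g}$ from Lemma \ref{weakly_inner}, followed by linearity to reduce to the adjoint case. The only difference is one of presentation: you run the linearity step at the character level and justify the interchange of the (pointwise finite) summations explicitly, whereas the paper performs the same interchange directly on derivations, writing $[d,\sum_g C_g\textbf{ad}_g]=\sum_g C_g[d,\textbf{ad}_g]$ without further comment.
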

\begin{proof}
At first, let $d\in \Der$, $d_0=\textbf{ad}_x$

It is a well-known fact that $[d, \textbf{ad}_g](x)=\textbf{ad}_{d(g)}(x)$
\begin{multline*}
[d, \textbf{ad}_g](x)=d( \textbf{ad}_g(x)) - \textbf{ad}_g( d(x))=d(gx-xg)- [g, d(x)]=\\
=d(g)x+gd(x)-d(x)g-xd(g)- [g, d(x)]=[d(g),x]+[x, d(g)]=\textbf{ad}_{d(g)}(x).
\end{multline*}

Let $d_0\in \Inn^*$, i.e. $\chi_{d_0}$ is trivial on loops. Then $ \chi_{d_0}=\sum_{g \in G}C_g \chi_{\textbf{ad}_g}$, hence $d_0$ can be expressed in the form of an arbitrary sum $d_0=\sum_{g\in G} C_g \textbf{ad}_g$ - an arbitrary sum of adjoint derivations (which, nevertheless, maps any element of $G$ to a finite sum). Then
$$[d, d_0](x)=[d,\sum_{g\in G} C_g \textbf{ad}_g](x)=\sum_{g\in G}C_g  [d,\textbf{ad}_g](x)=\sum_{g\in G}\textbf{ad}_{d(g)}(x).$$
We know that for $\forall g$, $\textbf{ad}_{d(g)}$ has a trivial-on-loops character. Hence  $[d, d_0]$ is trivial on loops.
\end{proof}

\subsection{Characters on subgroupoids}
\begin{definition}
Let $\chi\in X_1(\Gamma)$. We define support of character $\chi $ as
$$supp(\chi)=\{\varphi\in \Hom(\Gamma)|\chi(\varphi)\neq 0\}$$
\end{definition}
Now we can look at character submodules consisting only of characters supported on some particular subgroupoid.
\begin{definition}
\begin{itemize}
\item $\Der_{[u]}=\{d\in \Der| supp(\chi_d)\subset \Gamma_{[u]}\}$;
\item $X_1(\Gamma_{[u]})=\{\chi\in X_1(\Gamma)|supp(\chi)\subset \Gamma_{[u]} \}$;
\item $X_1^{fin}(\Gamma_{[u]})=\{\chi\in X_1(\Gamma)|supp(\chi)\subset \Gamma_{[u]} , \chi\text{ is locally finite}\}$;
\item $X_0^{fin}(\Gamma_{[u]})=\{\chi\in X_1(\Gamma)|supp(\chi)\subset \Gamma_{[u]} , \chi\text{ is locally finite and trivial on loops}\}$.
\end{itemize}

\end{definition}
Properties of characters on subgroupoids have been studied in-depth in the article \cite{Arutyunov_2}. (In fact, there was only considered the case when $A=\C$. Nevertheless, all properties of characters on subgroupoids coincide in both cases, with their proofs totally identical to the case when $A=\C$.)
\begin{fact}
Some trivial facts:
\begin{itemize}
\item $X_1(\Gamma_{[u]})$ is a submodule in $X_1(\Gamma)$, $X_1^{fin}(\Gamma_{[u]})$ is a submodule in $X_1^{fin}(\Gamma)$. Similarly for $X_0$ and $X_0^{fin}$;
\item $\Der_{[u]}\cong X_1^{fin}(\Gamma_{[u]})$ as $A$-modules;
\item As a Lie algebra w.r.t. commutator, $\Der_{[u]}$ is not a subalgebra in $\Der$. In addition, if $X\in \Der_{[u]}, Y\in \Der_{[v]}$, then $[X,Y]$ is not always $0$;
\item Let  $G$ be \textbf{finite}. It follows that there is only finite number of objects and morphisms in $\Gamma$. Since a function over a disjoint union of a finite number of subdomains can be represented in a form of a direct sum, we have the following $A$-module isomorphisms:
\begin{enumerate}
\item  $X_1(\Gamma)=\bigoplus_{[u]\in G^G} X_1(\Gamma_{[u]})$,
\item $X_0(\Gamma)=\bigoplus_{[u]\in G^G} X_0(\Gamma_{[u]})$,
\item It follows that $\Der(A[G])\cong \bigoplus_{[u]\in G^G}\Der_{[u]}$.
\end{enumerate}
\end{itemize}

\end{fact}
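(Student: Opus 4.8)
The plan is to treat the four bullet points in turn, since each follows quickly from the machinery already set up, reserving the only genuine (though still modest) work for the negative claims in the third bullet. First I would dispose of the submodule assertions: because the support of a sum is contained in the union of the supports and $supp(r\chi)\subseteq supp(\chi)$ for $r\in A$, the set of characters whose support lies in the fixed subgroupoid $\Gamma_{[u]}$ is closed under addition and under the $A$-action, so $X_1(\Gamma_{[u]})$ is a submodule of $X_1(\Gamma)$. The variants $X_1^{fin}$, $X_0$, $X_0^{fin}$ are cut out by intersecting with the local-finiteness and trivial-on-loops conditions, each of which is itself preserved by the module operations, so the submodule property descends to all four cases. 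The isomorphism $\Der_{[u]}\cong X_1^{fin}(\Gamma_{[u]})$ then needs nothing new: the canonical $A$-module isomorphism $\Der\cong X_1^{fin}$ (the Corollary above) sends $d$ to $\chi_d$, and by definition $d\in\Der_{[u]}$ precisely when $supp(\chi_d)\subset\Gamma_{[u]}$, i.e. exactly when $\chi_d\in X_1^{fin}(\Gamma_{[u]})$; restricting an isomorphism to a submodule and its image is again an isomorphism.

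Next I would establish the decomposition for finite $G$. The essential input is the earlier Statement that $\Gamma=\bigsqcup_{[u]\in G^G}\Gamma_{[u]}$ is a disjoint union of connected components, so that no morphism of $\Gamma$ joins objects of two different classes and composable morphisms always remain within a single $\Gamma_{[u]}$. Consequently any character splits as $\chi=\sum_{[u]}\chi_{[u]}$, where $\chi_{[u]}$ agrees with $\chi$ on $\Hom(\Gamma_{[u]})$ and vanishes elsewhere; each $\chi_{[u]}$ is again a character, since additivity under composition is checked within one component (and all three morphisms involved in any composition lie in the same $\Gamma_{[u]}$), and the decomposition is unique. When $G$ is finite there are only finitely many classes, so this is a genuine \emph{direct sum} rather than a product, giving $X_1(\Gamma)=\bigoplus_{[u]}X_1(\Gamma_{[u]})$; intersecting with the (component-local) trivial-on-loops condition yields the analogous statement for $X_0$. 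Finally, finiteness of $G$ forces every character to be locally finite, whence $X_1=X_1^{fin}$ and $X_1(\Gamma_{[u]})=X_1^{fin}(\Gamma_{[u]})$, and combining $\Der\cong X_1^{fin}=X_1=\bigoplus_{[u]}X_1(\Gamma_{[u]})=\bigoplus_{[u]}X_1^{fin}(\Gamma_{[u]})\cong\bigoplus_{[u]}\Der_{[u]}$ gives the claimed $\Der(A[G])\cong\bigoplus_{[u]}\Der_{[u]}$.

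The one place that needs genuine content is the third bullet, whose claims are negative, and here I would read the obstruction directly off the commutator formula $\{\chi_{d_1},\chi_{d_2}\}(a,g)=\sum_{h}\chi_{d_1}(a,h)\chi_{d_2}(h,g)-\chi_{d_2}(a,h)\chi_{d_1}(h,g)$. A morphism $(h,g)$ has target $hg^{-1}$, so if both $\chi_{d_1}$ and $\chi_{d_2}$ are supported on $\Gamma_{[u]}$, a surviving summand forces $ah^{-1}\in[u]$ and $hg^{-1}\in[u]$, whence the target $ag^{-1}=(ah^{-1})(hg^{-1})$ of the output morphism lies in the product set $[u][u]$, which in a nonabelian group need not be contained in $[u]$. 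Thus the commutator can be supported outside $\Gamma_{[u]}$, so $\Der_{[u]}$ is not a subalgebra; the same formula, applied to $X\in\Der_{[u]}$ and $Y\in\Der_{[v]}$, produces output morphisms with target in $[u][v]$ and no structural reason to cancel, so $[X,Y]$ need not vanish. The only real task is to certify that these potentially-nonzero terms do not accidentally cancel, which I would settle by exhibiting one explicit small witness — most naturally a pair of elementary characters on $\Gamma$ for $G=S_3$ — and this bookkeeping, rather than any conceptual difficulty, is the main (and still minor) obstacle.
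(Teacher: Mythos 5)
Your proposal is sound, and it is worth noting at the outset that the paper gives no proof of this Statement at all: the facts are declared trivial and the reader is referred to \cite{Arutyunov_2} (and \cite{Arutyunov_1}) for properties of characters on subgroupoids. So there is no in-paper argument to compare against, and your write-up supplies what the paper omits. Your handling of the positive claims is correct: supports are subadditive under addition and shrink under the $A$-action, giving the submodule assertions; restricting the canonical isomorphism $\Der\cong X_1^{fin}$ to the submodule cut out by the support condition gives $\Der_{[u]}\cong X_1^{fin}(\Gamma_{[u]})$; and the component decomposition $\Gamma=\bigsqcup_{[u]\in G^G}\Gamma_{[u]}$, together with the observation that composable morphisms never leave a component (so each restriction of a character, extended by zero, is again a character) and the finiteness of $G^G$, yields the direct-sum statements and hence $\Der(A[G])\cong\bigoplus_{[u]}\Der_{[u]}$.

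The one genuinely unfinished point is the third bullet. ``Not a subalgebra'' and ``$[X,Y]$ is not always $0$'' are existential claims; your product-set argument ($[u][u]$, resp.\ $[u][v]$) shows only that nothing forces the commutator to stay in $\Gamma_{[u]}$ or to vanish, and you explicitly defer the witness. That witness is cheaper than the elementary-character bookkeeping you anticipate: take inner derivations. For $A=\Z$, $G=S_3$, both $\textbf{ad}_{(12)}$ and $\textbf{ad}_{(13)}$ lie in $\Der_{[(12)]}$, since the character of $\textbf{ad}_a$ is supported on morphisms whose source or target equals $a$, hence on $\Gamma_{[a]}$; but
$$[\textbf{ad}_{(12)},\textbf{ad}_{(13)}]=\textbf{ad}_{(12)(13)-(13)(12)}=\textbf{ad}_{(132)-(123)},$$
whose character is supported on $\Gamma_{[(123)]}$, a component disjoint from $\Gamma_{[(12)]}$, and is nonzero because $(132)-(123)$ is not central in $\Z[S_3]$ (the center is spanned by class sums, and matching coefficients of $(123)$ and $(132)$ would force $2=0$). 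Hence $\Der_{[(12)]}$ is not closed under the bracket. Likewise $\textbf{ad}_{(12)}\in\Der_{[(12)]}$ and $\textbf{ad}_{(123)}\in\Der_{[(123)]}$ give $[\textbf{ad}_{(12)},\textbf{ad}_{(123)}]=\textbf{ad}_{(23)-(13)}\neq 0$, since $(23)-(13)$ is not central over any nontrivial ring. With these two lines inserted, your proof is complete.
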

Other facts about derivations on subgroupoids can be found in \cite{Arutyunov_1,Arutyunov_2}. 
\subsection{Locally-finite characters complex}
Consider an $A$-module complex from the character modules
\begin{equation}\label{finses}
0  \xrightarrow[]{} X_0^{fin}(\Gamma_{[u]}) \xrightarrow[]{\partial_0} X_1^{fin}(\Gamma_{[u]}) \xrightarrow[]{\partial_1} X_1^{fin}(\Hom(u,u)) \xrightarrow[]{} 0.
\end{equation}

The first differential $\partial_0:X_0^{fin}(\Gamma_{[u]})\to X_1^{fin}(\Gamma_{[u]})$ is defined as an identical inclusion of functions on $\Gamma$. 

The second differential $\partial_1: X_1^{fin}(\Gamma_{[u]})\to X_1^{fin}(\Hom(u,u))$ is a projection on a group of loops over $u$
 $$\partial_1(\chi)(\varphi)=\begin{cases}
\chi(\varphi), &\varphi \in \Hom(u,u)\\
 0,&  else
 \end{cases}$$
 Talking simply, $\partial_1$ is just a restriction of character on a $\Hom(u,u)$.
 
 \begin{theorem}
 The sequence 
 \begin{equation}\label{ses_groupoids}
0  \xrightarrow[]{} X_0^{fin}(\Gamma_{[u]}) \xrightarrow[]{\partial_0} X_1^{fin}(\Gamma_{[u]}) \xrightarrow[]{\partial_1} X_1^{fin}(\Hom(u,u)) 
\end{equation}
is an exact sequence of $A$-modules.
 \end{theorem}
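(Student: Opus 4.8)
The plan is to verify exactness only at the two nontrivial spots, namely at $X_0^{fin}(\Gamma_{[u]})$ and at $X_1^{fin}(\Gamma_{[u]})$; exactness at the last term is \emph{not} claimed, since the sequence \eqref{ses_groupoids} carries no trailing zero and $\partial_1$ need not be surjective. Exactness at the first spot --- injectivity of $\partial_0$ --- is immediate, because $\partial_0$ is by definition the identity inclusion of functions, so $\ker\partial_0 = 0$. All the content is therefore in proving $\im\partial_0 = \ker\partial_1$.

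First I would dispatch the easy inclusion $\im\partial_0 \subseteq \ker\partial_1$. If $\chi \in X_0^{fin}(\Gamma_{[u]})$, then $\chi$ is trivial on loops by definition, so in particular it vanishes on every $\varphi \in \Hom(u,u)$; hence $\partial_1(\partial_0(\chi)) = 0$. Thus $\partial_1\circ\partial_0 = 0$ and $\im\partial_0 \subseteq \ker\partial_1$.

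The substance is the reverse inclusion $\ker\partial_1 \subseteq \im\partial_0$. Take $\chi \in X_1^{fin}(\Gamma_{[u]})$ with $\partial_1(\chi) = 0$, i.e. $\chi$ vanishes on all loops based at the distinguished object $u$. I must upgrade this to the statement that $\chi$ vanishes on \emph{every} loop $\alpha \in \Hom(a,a)$ for \emph{every} object $a \in [u]$, which is exactly triviality on loops and hence $\chi \in X_0^{fin}(\Gamma_{[u]})$, giving $\chi = \partial_0(\chi) \in \im\partial_0$. To do this I would use that $\Gamma_{[u]}$ is a connected component (the fact stated and referenced earlier), so for each $a \in [u]$ there is a connecting morphism $\varphi \in \Hom(a,u)$. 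Through $\varphi$ the loop $\alpha$ is conjugate to a loop $\beta \in \Hom(u,u)$ based at $u$, with $\alpha = \varphi^{-1}\circ\beta\circ\varphi$; by the last item of Statement \ref{character_properties} any $1$-character takes equal values on conjugate loops, so $\chi(\alpha) = \chi(\beta)$. Since $\beta \in \Hom(u,u)$ and $\partial_1(\chi) = 0$, we get $\chi(\beta) = 0$, hence $\chi(\alpha) = 0$. As $a$ and $\alpha$ were arbitrary, $\chi$ is trivial on loops; local finiteness is inherited directly from the hypothesis on $\chi$, so indeed $\chi \in X_0^{fin}(\Gamma_{[u]})$.

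The main obstacle is exactly this conjugation step: one must be certain that every loop anywhere in the component $\Gamma_{[u]}$ is conjugate, through a connecting morphism, to a loop based at the single chosen object $u$, and that this conjugacy transports to character values. Both ingredients are already available --- connectedness of $\Gamma_{[u]}$ and the loop-conjugation invariance of $1$-characters from Statement \ref{character_properties} --- so once they are invoked correctly the argument closes with no estimate or extra hypothesis. The only routine care needed is to match the composition convention of $\Gamma$ so that the conjugate $\varphi^{-1}\circ\beta\circ\varphi$ genuinely lands back in $\Hom(u,u)$ and recovers $\alpha$.
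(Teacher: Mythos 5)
Your proof is correct and follows essentially the same route as the paper: injectivity of the inclusion $\partial_0$, the trivial composite $\partial_1\circ\partial_0=0$, and the key inclusion $\ker\partial_1\subseteq\im\partial_0$ obtained by transporting an arbitrary loop in $\Gamma_{[u]}$ to a loop at $u$ via connectedness and the conjugation-invariance of characters from Statement \ref{character_properties}. The only difference is that you spell out the conjugation argument which the paper compresses into a citation of those character properties, and you explicitly note the absence of a trailing zero --- both welcome clarifications, not deviations.
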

\begin{proof}

Firstly, $\partial_1\circ \partial_0=0$. Since a restriction of a trivial-on-loops character on a loop group on some particular vertex is zero.

Secondly,we will prove that the sequence \eqref{ses_groupoids} is exact:
\begin{enumerate}
\item $\ker(\partial_0)=\{0\}$, since if character is trivial on loops, and equals 0 on non-loops, then it vanishes everywhere;
\item $\im\partial_0\equiv X_0^{fin}$;
\item Let us prove that $\ker \partial_1\subset \im \partial_0$. By definition, $\chi \in \ker\partial_1 \iff \chi$ is locally-finite,  and  $\forall \varphi\in \Hom(u,u), \chi(\varphi)=0$. It follows that $\chi$ is trivial on any loop in $\Gamma_{[u]}$ due to character properties \eqref{character_properties}. Hence  $\chi\in \im\partial_0$. 
\end{enumerate}

\end{proof} 
 
Contrary to exactness of the sequence \eqref{ses_groupoids}, the sequence \eqref{finses} is not always exact. 
 
 However, we can prove that sequence of the form \eqref{finses} becomes exact in case $[u]$ has finite size.
 \subsection{Non-locally-finite characters complex}
There are similar character modules and a similar sequence of characters for non locally-finite characters. In case $A=\C$ this sequence has been studied in papers \cite{Arutyunov_1, Arutyunov_2}.
  \begin{definition}
  \begin{itemize}
      \item  $X_1(\Gamma)$ is a set of 1-characters on groupoid $\Gamma$;
      \item $X_0(\Gamma)$ is a set of trivial on loops 1-characters on groupoid $\Gamma$.
  \end{itemize}

  \end{definition}
 Then there is a (generally, non-exact) sequence of characters.
  
\begin{equation}\label{genses}
0  \xrightarrow[]{} X_0(\Gamma_{[u]}) \xrightarrow[]{\partial_0} X_1(\Gamma_{[u]}) \xrightarrow[]{\partial_1} X_1(\Hom(u,u)) \xrightarrow[]{} 0.
\end{equation}
For some particular classes of groups $G$, this sequence coincides with \eqref{ses_groupoids}. In addition, it can be described in terms of cohomology of 2-category (2-groupoid) defined similarly to $\Gamma$. Details on this approach can be found in paper \cite{Arutyunov_4} (see Theorem 2.1 and Proposal 4.4).

 \subsection{Characters on subgroupoid corresponding to finite conjugacy class}
 
In this section we study the $A$-module of characters over $\Gamma_{[u]}$, when the conjugacy class $[u]$ is finite. This provides some techinques, which are useful for studying $\Der(A[G])$ in case the group $G$ is a finite group or an FC-group. (Detailed description of FC-groups and their properties can be found in \cite{Robinson_book}). 

In addition, these techniques provide a way to solve a Derivation problem (i.e. find out whether $\Out(A[G])=0$ ) in some cases when $G$ \textit{has} at least one finite conjugacy class. A special example is the case when $G$ is an FC-group, i.e. all conjugacy classes of $G$ are finite.
\begin{lemma}\label{finiteness_intersection}
Let $g\in G$, and let $|u|<\infty$. Then there are only finitely many morphisms of the form $(*,g)$ in $\Hom(\Gamma_{[u]})$. 
\end{lemma}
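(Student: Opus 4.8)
The plan is to reduce the finiteness claim to a direct count using the target map of the groupoid $\Gamma$. Recall that a morphism $(x,g)\in\Hom(\Gamma)$ has source $s((x,g))=g^{-1}x$ and target $t((x,g))=xg^{-1}$. By the definition of the full subgroupoid $\Gamma_{[u]}$, a morphism whose second coordinate is the fixed element $g$, namely one of the form $(x,g)$, belongs to $\Hom(\Gamma_{[u]})$ if and only if its target satisfies $t((x,g))=xg^{-1}\in[u]$.

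First I would rewrite this membership condition as a constraint on the first coordinate. The relation $xg^{-1}\in[u]$ is equivalent to $x\in[u]g:=\{yg\mid y\in[u]\}$. Since right multiplication by $g$ is a bijection of $G$, the set $[u]g$ is in bijection with $[u]$ and hence has exactly $|[u]|$ elements. By hypothesis the conjugacy class $[u]$ is finite, so only finitely many $x$ can occur.

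Consequently, the morphisms of the form $(*,g)$ inside $\Hom(\Gamma_{[u]})$ are precisely the pairs $(yg,g)$ with $y\in[u]$, and there are exactly $|[u]|<\infty$ of them, which proves the lemma. The argument is elementary and I do not anticipate a genuine obstacle; the only point requiring care is to apply the correct source/target convention, so that fixing the second coordinate $g$ together with the requirement $t(\varphi)\in[u]$ confines the first coordinate to the finite set $[u]g$.
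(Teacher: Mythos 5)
Your proof is correct and takes essentially the same approach as the paper: both reduce the count of morphisms $(*,g)$ in $\Hom(\Gamma_{[u]})$ to a translate of the finite conjugacy class $[u]$, yielding exactly $|[u]|$ morphisms. The only cosmetic difference is that you use the target condition $xg^{-1}\in[u]$ (so $x\in[u]g$), whereas the paper writes both the source and target conditions, observes they are equivalent under conjugation by $g$, and counts via $h\in g[u]$.
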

\begin{proof}
For any $g\in G$,
\begin{multline*}
|h:(h,g)\in \Hom(\Gamma_{[u]})|=
|\{h:g^{-1}h,hg^{-1}\in [u]\}|=\\
(\text{since }hg^{-1}\in [u]\iff g^{-1}hg^{-1}g=g^{-1}h\in [u])
\\=|\{h:g^{-1}h\in [u]\}|=|\{h:h\in g[u]\}|
=|g[u]|=|[u]|<\infty.
\end{multline*}
\end{proof}
In terms of paper \cite{Arutyunov_3} (section \textbf{2.3}, p.7), this means $|[u]|<\infty\implies|\cH_g\cap \Gamma_{[u]} |<\infty$.
\begin{corollary}
If $|u|<\infty$, then any character with support in $\Gamma_{[u]}$ is automatically locally finite, i.e. $ X_1^{fin}(\Gamma_{[u]})\equiv X_1(\Gamma_{[u]})$, $X_0^{fin}(\Gamma_{[u]})\equiv X_0(\Gamma_{[u]})$.
\end{corollary}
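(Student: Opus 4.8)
The plan is to obtain this as an immediate consequence of Lemma~\ref{finiteness_intersection}. Recall that a character $\chi$ is locally finite precisely when, for each fixed $v\in G$, the value $\chi((x,v))$ vanishes for all but finitely many $x\in G$. Since both inclusions $X_1^{fin}(\Gamma_{[u]})\subseteq X_1(\Gamma_{[u]})$ and $X_0^{fin}(\Gamma_{[u]})\subseteq X_0(\Gamma_{[u]})$ hold trivially from the definitions (a locally finite character supported on $\Gamma_{[u]}$ is in particular a character supported on $\Gamma_{[u]}$), it suffices to establish the reverse inclusions, i.e. that every character supported on $\Gamma_{[u]}$ is \emph{automatically} locally finite when $|[u]|<\infty$.

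So I would take $\chi\in X_1(\Gamma_{[u]})$ and fix an arbitrary $v\in G$. Because $supp(\chi)\subseteq \Gamma_{[u]}$, any morphism $(x,v)$ with $\chi((x,v))\neq 0$ must lie in $\Hom(\Gamma_{[u]})$. By Lemma~\ref{finiteness_intersection}, applied with $g=v$, there are only finitely many morphisms of the form $(*,v)$ belonging to $\Hom(\Gamma_{[u]})$ (in fact exactly $|[u]|<\infty$ of them). Hence the set $\{x\in G:(x,v)\in\Hom(\Gamma_{[u]})\}$ is finite, and a fortiori only finitely many $x$ satisfy $\chi((x,v))\neq 0$. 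As $v$ was arbitrary, $\chi$ is locally finite, which gives $X_1(\Gamma_{[u]})\subseteq X_1^{fin}(\Gamma_{[u]})$ and therefore equality.

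The identical argument, now restricted to characters that are additionally trivial on loops, yields $X_0(\Gamma_{[u]})=X_0^{fin}(\Gamma_{[u]})$, since the trivial-on-loops condition is simply carried along and plays no role in the finiteness count. The only point requiring any care—and it is the place I would double-check rather than a genuine obstacle—is to align the variable roles in the definition of local finiteness with the counting statement of the lemma: namely, to confirm that the lemma bounds the number of admissible first coordinates $x$ for each fixed second coordinate $v$, which is exactly the quantifier structure local finiteness demands. Once the finiteness of $\{x:(x,v)\in\Hom(\Gamma_{[u]})\}$ is in hand, local finiteness follows with nothing further to prove.
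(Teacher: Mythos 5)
Your proof is correct and follows essentially the same route as the paper: both arguments deduce the result directly from Lemma~\ref{finiteness_intersection}, observing that a character supported on $\Gamma_{[u]}$ can be nonzero only on morphisms $(x,v)$ lying in $\Hom(\Gamma_{[u]})$, of which there are finitely many for each fixed $v$. Your additional care about the quantifier structure and the explicit treatment of the $X_0$ case are fine refinements but do not change the substance of the argument.
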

\begin{proof}
Let a character $\chi$ be supported in $\Gamma_{[u]}$. Then for any $g$ we know that  all morphisms of the form $(*,g)$, such that $\chi(*,g)\neq 0$, lie in in $\Hom(\Gamma_{[u]})$. Due to the lemma above, there can only be a finite number of such morphisms in $\Hom(\Gamma_{[u]})$. Thus, $\chi$ is locally finite.
\end{proof}
The following lemma is an important technical tool in this paper. Its consequences will be used later to prove decomposition theorems for derivations of finitely generated FC-group rings and finite group rings.
\begin{lemma}\label{main}
Let the conjugacy class $[u]$ be finite. Then the sequence 
 \begin{equation}
0  \xrightarrow[]{} X_0^{fin}(\Gamma_{[u]}) \xrightarrow[]{\partial_0} X_1^{fin}(\Gamma_{[u]}) \xrightarrow[]{\partial_1} X_1^{fin}(\Hom(u,u))  \xrightarrow[]{} 0
\end{equation}
is a split exact sequence of $A$-modules.
 \end{lemma}
 \begin{proof}
 $ $\newline
1. \textit{Exactness part}

For exactness, we need only $\im\partial_1=X_1^{fin}(\Hom(u,u)) $. 

So, let $\chi_u\in X_1^{fin}(\Hom(u,u))$. Then we can define $\chi\in \partial_1^{-1}(\chi_u)$ in the following way:\begin{equation}\label{section}
\chi(\varphi)=
\begin{cases}
\chi_u(\varphi) & \text{ if } \varphi\in \Hom(u,u),\\
0  & \text{ if } \varphi \text{ is not a loop in }\Gamma[u],\\
\chi_u(\theta^{-1}\circ\varphi\circ\theta) & \text{ if }\varphi\in \Hom(u^\prime,u^\prime).
\end{cases}
\end{equation}

In the last case, $\theta$  is any morphism in $\Hom( u^\prime, u)$. Obviously, due to characters' properties \eqref{character_properties}, $\chi(\varphi)$ does not depend on the choice of $\theta$. In addition,
\begin{multline*}
\chi(\varphi\psi)=\chi_u(\theta^{-1}\varphi\psi\theta)=\chi_u(\theta^{-1}\varphi\theta\theta^{-1}\psi\theta)=\\
=\chi_u(\theta^{-1}\varphi\theta)+\chi_u(\theta^{-1}\psi\theta)=
\chi(\varphi)+\chi(\psi)
\end{multline*} This means that character $\chi$ is defined correctly.

\textit{We have to prove that $\chi$ is locally finite (on $\Gamma_{[u]}$).}

Let $g\in G$, then there must be finite number of $h$'s, such that $(h,g)\in \Hom(\Gamma_{[u]})$ and $ \chi((h,g))\neq 0$. 

On one hand, $(h,g)\in \Hom(\Gamma_{[u]})\implies g^{-1}h, hg^{-1}\in [u]$. 

On the other hand $\chi((h,g))\neq 0\implies (h,g)$ is a loop $\implies g^{-1}h=hg^{-1}$. We get that
$$|\{h:h\in Z(g),hg^{-1}\in [u]\}|=|\{h:h\in Z(g)\cap [u]g\}|\leq |[u]g|<\infty$$
hence $\chi$ is locally-finite.

2. \textit{Split part}

Denote  $ id_{X_1^{fin}(\Hom(u,u)) }$ as the identity map on $X_1^{fin}(\Hom(u,u))$. The definition above defines a section for $\partial_1$, i.e. the map $f:\chi_u\mapsto\chi$ , has the property: $\partial_1\circ f= id_{X_1^{fin}(\Hom(u,u)) }$. Hence  the sequence \eqref{ses_groupoids} splits.
\end{proof} 
\begin{definition}
Denote the map described above \eqref{section} as $f: \chi_u\mapsto \chi$. This map will be used later
\end{definition}
\begin{corollary}\label{decomp_subgroupoid}
It follows that $\Der_{[u]}$ is isomorphic to
$X_0^{fin}(\Gamma_{[u]}) \oplus X_1^{fin}(\Hom(u,u))$.
\end{corollary}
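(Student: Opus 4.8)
The plan is to read the decomposition straight off the split exact sequence of Lemma \ref{main}, after identifying $\Der_{[u]}$ with the module of locally finite characters supported on the connected component $\Gamma_{[u]}$. First I would recall the $A$-module isomorphism $\Der_{[u]}\cong X_1^{fin}(\Gamma_{[u]})$ noted earlier, which is just the character correspondence \eqref{d^g_h} restricted to characters whose support lies in $\Gamma_{[u]}$. This reduces the claim to showing that the middle term of the sequence decomposes as the direct sum of its two outer terms.

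Next I would invoke the standard splitting lemma for short exact sequences of modules. Lemma \ref{main} asserts that
$$0 \to X_0^{fin}(\Gamma_{[u]}) \xrightarrow{\partial_0} X_1^{fin}(\Gamma_{[u]}) \xrightarrow{\partial_1} X_1^{fin}(\Hom(u,u)) \to 0$$
is exact and that $\partial_1$ admits the explicit section $f$ built there, so that $\partial_1\circ f = \mathrm{id}$. Given such a section, every $\chi\in X_1^{fin}(\Gamma_{[u]})$ decomposes uniquely as $\chi = \big(\chi - f(\partial_1\chi)\big) + f(\partial_1\chi)$, where the first summand lies in $\ker\partial_1$ and the second in $\im f$. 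Exactness gives $\ker\partial_1 = \im\partial_0 \cong X_0^{fin}(\Gamma_{[u]})$ (the map $\partial_0$ being the injective inclusion), while injectivity of $f$ gives $\im f \cong X_1^{fin}(\Hom(u,u))$. Hence
$$X_1^{fin}(\Gamma_{[u]}) = \ker\partial_1 \oplus \im f \cong X_0^{fin}(\Gamma_{[u]}) \oplus X_1^{fin}(\Hom(u,u)),$$
and composing with $\Der_{[u]}\cong X_1^{fin}(\Gamma_{[u]})$ yields the stated isomorphism.

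I do not expect a genuine obstacle: the substantive work is already contained in Lemma \ref{main}, and this corollary is a formal consequence of the splitting lemma. The only point meriting care is to confirm that the isomorphism $\Der_{[u]}\cong X_1^{fin}(\Gamma_{[u]})$ is an honest $A$-module isomorphism and that the section $f$ is $A$-linear, so that the decomposition holds at the level of $A$-modules and not merely of abelian groups; both are immediate from the explicit formulas already recorded.
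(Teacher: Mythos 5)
Your proof is correct and takes exactly the route the paper intends: Corollary \ref{decomp_subgroupoid} is stated without proof precisely because it is the standard splitting-lemma consequence of Lemma \ref{main}, combined with the previously recorded $A$-module identification $\Der_{[u]}\cong X_1^{fin}(\Gamma_{[u]})$. Your explicit decomposition $\chi = \bigl(\chi - f(\partial_1\chi)\bigr) + f(\partial_1\chi)$ via the section $f$ just spells out what the paper leaves implicit.
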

\begin{corollary}
If $|u|<\infty$, then
 \begin{equation}
0  \xrightarrow[]{} X_0(\Gamma_{[u]}) \xrightarrow[]{\partial_0} X_1(\Gamma_{[u]}) \xrightarrow[]{\partial_1} X_1(\Hom(u,u))  \xrightarrow[]{} 0
\end{equation}
is a split exact sequence of $A$-modules. Here the differentials are same as for the sequence above.
\end{corollary}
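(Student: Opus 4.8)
The plan is to reduce this statement to the split exact sequence already established in Lemma \ref{main}, by showing that under the hypothesis $|u|<\infty$ each module appearing here coincides with its locally finite analogue, so that the two sequences are literally the same sequence of $A$-modules with the same differentials. The corollary of Lemma \ref{finiteness_intersection} already supplies two of the three required identifications: since $[u]$ is finite, every character supported in $\Gamma_{[u]}$ is automatically locally finite, whence $X_0(\Gamma_{[u]}) = X_0^{fin}(\Gamma_{[u]})$ and $X_1(\Gamma_{[u]}) = X_1^{fin}(\Gamma_{[u]})$. Moreover $\partial_0$ and $\partial_1$ are given by the same formulas in both settings, so nothing changes on the level of maps.

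The remaining point is the third term, where I would verify that $X_1(\Hom(u,u)) = X_1^{fin}(\Hom(u,u))$. This holds for a reason independent of the size of $[u]$: the loop groupoid $\Hom(u,u)$ has a single object, and a morphism $(h,g)$ in it must satisfy $g^{-1}h = hg^{-1} = u$, which forces $g\in Z(u)$ and $h = gu$. Thus for each fixed second coordinate $g$ there is exactly one morphism of the form $(\ast,g)$ in $\Hom(u,u)$, so the local finiteness condition (that $\chi(x,g)=0$ for all but at most one $x$) is automatically satisfied by every character. In particular this identification remains valid even when $Z(u)$ is infinite, which is exactly the situation one must keep in mind for FC-groups, where $[u]$ is finite but the centralizer need not be.

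With all three modules identified with their locally finite counterparts and the differentials unchanged, the sequence of this corollary becomes term-by-term identical to the sequence of Lemma \ref{main}, which was shown there to be split exact; the same section $f$ of \eqref{section} then provides the splitting here as well. Consequently no new computation is required. The only place demanding care is the third module: one must observe that it is the single-object structure of $\Hom(u,u)$, rather than the finiteness of the conjugacy class, that makes local finiteness automatic there, so that the equality of the locally finite and non-locally-finite loop modules is not a consequence of $|u|<\infty$ but of the geometry of the loop groupoid.
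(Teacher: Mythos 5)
Your proof is correct and takes essentially the paper's own (largely implicit) route: the corollary is meant to follow at once from Lemma~\ref{main} combined with the identifications $X_0^{fin}(\Gamma_{[u]})\equiv X_0(\Gamma_{[u]})$ and $X_1^{fin}(\Gamma_{[u]})\equiv X_1(\Gamma_{[u]})$ supplied by the corollary of Lemma~\ref{finiteness_intersection}, with the same differentials and the same section $f$ providing the splitting. Your extra verification that $X_1^{fin}(\Hom(u,u))=X_1(\Hom(u,u))$ --- because every morphism of $\Hom(u,u)$ has the form $(gu,g)$ with $g\in Z(u)$, so each second coordinate admits at most one morphism and local finiteness is vacuous --- correctly fills in a detail the paper uses without comment.
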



\begin{definition}
\label{def-Hom_ab}
Let $H$ be some group, $A$ be a ring. We denote $\Hom_{Ab}(H,A)$ as the set of additive homomorphisms from the group $H$ to the ring $A$. 
\end{definition}
\begin{lemma}\label{loop_group}
The following isomorphism holds
$$X_1(\Hom(u,u))\cong \Hom_{Ab}(Z(u), A).$$
\end{lemma}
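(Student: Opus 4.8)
## Proof Proposal

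The plan is to exhibit an explicit isomorphism between the module of characters on the loop group $\Hom(u,u)$ and the module of additive homomorphisms $\Hom_{Ab}(Z(u),A)$. The crucial observation is that loops at $u$ are morphisms $\varphi=(a,b)\in\Hom(u,u)$ satisfying $b^{-1}a=ab^{-1}=u$; writing $g=ab^{-1}=u$ we see that $a=ub$, so a loop is determined by the condition $b^{-1}ub\cdot$ — more precisely, the loop condition forces the element $z:=ab^{-1}$ to commute appropriately, and one checks by direct computation that $\Hom(u,u)$ is parametrized by the centralizer $Z(u)=\{g\in G : gu=ug\}$. So first I would pin down this parametrization: I would show that the map sending a loop $\varphi\in\Hom(u,u)$ to the corresponding centralizing element $z\in Z(u)$ is a bijection, and — this is the key structural point — that composition of loops corresponds to multiplication in $Z(u)$. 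In other words, I would verify that $\Hom(u,u)$ is isomorphic \emph{as a group} to $Z(u)$.

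Once that group isomorphism $\Hom(u,u)\cong Z(u)$ is established, the lemma becomes almost formal. A character $\chi\in X_1(\Hom(u,u))$ is by definition a function satisfying $\chi(\varphi\circ\psi)=\chi(\varphi)+\chi(\psi)$, i.e. $\chi$ is precisely a group homomorphism from $\Hom(u,u)$ into the additive group $(A,+)$. Transporting along the isomorphism of the previous step, such $\chi$ correspond bijectively to additive homomorphisms $Z(u)\to A$, which is exactly $\Hom_{Ab}(Z(u),A)$ by Definition~\ref{def-Hom_ab}. I would then check that this correspondence respects the $A$-module structure: scaling $\chi$ by $r\in A$ corresponds to scaling the associated homomorphism, and addition of characters corresponds to addition of homomorphisms, both of which are immediate since the module operations are defined pointwise on the common target $A$.

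The main obstacle — really the only genuine content — is the first step: correctly setting up the correspondence between loops and centralizer elements and verifying that groupoid composition matches the group law of $Z(u)$. Here one must be careful with the composition convention from the definition, $\varphi\circ\psi=(u_2v_1,v_2v_1)$, and with the direction of the bijection, since a naive guess may send composition to multiplication in the opposite order (giving an anti-isomorphism). I expect the clean choice is to associate to $\varphi=(a,b)\in\Hom(u,u)$ the element $z=b$ (or $z=ab^{-1}$, depending on which makes composition additive in the right order), and to confirm $z\in Z(u)$ using $b^{-1}u b = u$, which follows from $v^{-1}u=a$ and $uv^{-1}=b$ being equal to $u$ on a loop. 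Everything after that is bookkeeping, so I would keep the exposition of steps two and three brief and concentrate the argument on making the loop-to-centralizer dictionary precise.
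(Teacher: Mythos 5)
Your proposal is correct and takes essentially the same route as the paper's (much terser) proof: identify $\Hom(u,u)$ as a group canonically isomorphic to the centralizer $Z(u)$, then observe that characters, being additive under composition, are precisely the additive homomorphisms $Z(u)\to A$. One small slip: your parenthetical alternative $z=ab^{-1}$ cannot work, since $ab^{-1}=u$ is constant on loops; your primary choice $z=b$ (the second coordinate, which commutes with $u$) is the right dictionary, and the order-reversal you flag is harmless because the target $A$ is abelian.
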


\begin{proof}
It is easy to notice that $\Hom(u,u)$ is a group with respect to morphism composition. Obviously, this group is canonically isomorphic to centralizer subgroup $Z(u)$. In other words, characters on $\Hom(u,u)$ are nothing but additive homomorphisms from $Z(u)$ to $A$. 
\end{proof}

\section{Derivations of FC-group rings over finitely generated FC groups}
 Recall that a group $G$ is called an FC-group (see \cite{Robinson_book}), if all conjugacy classes of $G$ are of a finite size. 
 \begin{lemma}\label{FC1}
Let $G$ be a finitely generated FC-group, $\Gamma$ be its groupoid of adjoint action. Then $X_1^{fin}$ can be decomposed into a direct sum of submodules

 \begin{equation}
X_1^{fin}(\Gamma)=\bigoplus_{[u]\in G^G} X_1^{fin}(\Gamma_{[u]}).
 \end{equation}

 \end{lemma}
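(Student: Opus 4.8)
The plan is to build the isomorphism from the restriction map and to locate all the real content in a single finiteness-of-support step. Since $\Gamma=\bigsqcup_{[u]\in G^G}\Gamma_{[u]}$, any $\chi\in X_1^{fin}(\Gamma)$ restricts to a character $\chi|_{\Gamma_{[u]}}$ on each component, and because $G$ is an FC-group every class $[u]$ is finite, so by the corollary following Lemma~\ref{finiteness_intersection} each $\chi|_{\Gamma_{[u]}}$ is automatically locally finite, i.e. lies in $X_1^{fin}(\Gamma_{[u]})$. This yields an injective $A$-module map $\chi\mapsto(\chi|_{\Gamma_{[u]}})_{[u]}$ into the direct product $\prod_{[u]}X_1^{fin}(\Gamma_{[u]})$, with inverse given by summation over components. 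The only nontrivial point is to show that this map lands in the direct \emph{sum}, that is, that for each fixed $\chi$ only finitely many restrictions $\chi|_{\Gamma_{[u]}}$ are nonzero; this is exactly where finite generation enters.

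To prove finite support I would first reinterpret morphisms. From $\Hom(a,b)=\{(u,v):v^{-1}u=a,\ uv^{-1}=b\}$ one gets $u=va$ and $b=vav^{-1}$, so the second coordinate $v$ is precisely the conjugator carrying the source to the target, and a short check of the composition law $\varphi\circ\psi=(u_2v_1,v_2v_1)$ shows that composing morphisms multiplies their second coordinates. Fix a finite generating set $S=\{s_1,\dots,s_k\}$ of $G$, closed under inverses. Writing the conjugator $v$ of an arbitrary morphism $\varphi\colon a\to b$ as a word $s_{i_m}\cdots s_{i_1}$ and iterating the additivity $\chi(\varphi\circ\psi)=\chi(\varphi)+\chi(\psi)$, I would express $\chi(\varphi)$ as a finite sum of values of $\chi$ on morphisms whose second coordinate is a single generator and whose source is one of the successive conjugates $(s_{i_{j-1}}\cdots s_{i_1})\,a\,(s_{i_{j-1}}\cdots s_{i_1})^{-1}$ of $a$. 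All these intermediate sources lie in the class $[a]$, hence in the same component. Consequently, if $\chi|_{\Gamma_{[u]}}\neq 0$, then $\chi$ is nonzero on at least one morphism of $\Gamma_{[u]}$ whose second coordinate lies in $S$ and whose source lies in $[u]$.

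It then remains to count. For each generator $s_i$, local finiteness says $\chi(x,s_i)=0$ for all but finitely many $x$, so the total set of morphisms with second coordinate in $S$ on which $\chi$ is nonzero is finite; their sources form a finite subset of $G$, meeting only finitely many conjugacy classes. By the previous paragraph, every component with $\chi|_{\Gamma_{[u]}}\neq 0$ contains such a morphism, so at most finitely many components support $\chi$. Hence $\chi=\sum_{[u]}\chi|_{\Gamma_{[u]}}$ is a finite sum, the restriction map is an isomorphism onto $\bigoplus_{[u]}X_1^{fin}(\Gamma_{[u]})$, and directness is immediate from the disjointness of the components. The main obstacle is exactly this finiteness-of-support step: local finiteness alone controls $\chi$ only one fixed second coordinate at a time, and it is finite generation that converts finitely many such constraints, one per generator, into a global bound on the number of active conjugacy classes. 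Without finite generation the argument breaks, and one should expect $X_1^{fin}(\Gamma)$ to become the direct product rather than the direct sum.
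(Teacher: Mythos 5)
Your proof is correct and follows essentially the same route as the paper's: both decompose an arbitrary morphism $(u,v)$ into a composition of morphisms whose second coordinates are generators (all lying in the same component), then use additivity of $\chi$ together with local finiteness at each of the finitely many generators to conclude that $\operatorname{supp}(\chi)$ meets only finitely many subgroupoids $\Gamma_{[u]}$. The only quibble is your closing speculation that without finite generation $X_1^{fin}(\Gamma)$ "becomes the direct product" --- in general it sits strictly between the sum and the product, since gluing locally finite characters from infinitely many components need not be locally finite --- but this aside is not part of the argument and does not affect its validity.
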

 We identify character supported on $\Gamma_{[u]}$ with its extension on the whole $\Gamma$, such that the extended character has zero value on any morphisms outside of $\Gamma_{[u]}$.
 \begin{proof}
1) The inclusion $ \bigoplus_{[u]\in G^G} X_1^{fin}(\Gamma_{[u]})\subset X_1^{fin}(\Gamma)$ is obvious.
 
 In fact, if $ \chi\in \bigoplus_{[u]\in G^G} X_1^{fin}(\Gamma_{[u]})$, then by definition of infinite direct sum of modules, $\chi$ is a finite sum of locally-finite characters. This implies that $\chi$ is locally-finite.
 
2) Let us prove that $ X_1^{fin}(\Gamma)\subset\bigoplus_{[u]\in G^G} X_1^{fin}(\Gamma_{[u]}) $. This is actually equivalent to the following statement: if $\chi$ is locally finite, then $supp(\chi)$ lies in a finite union of subgroupoids $\bigsqcup_{i=1}^m \Gamma_{[u_i]} $ for some $m$. So, let us prove this.

By assumption, $G$ is finitely generated. Let us choose and fix its presentation: $$G=\langle s_1, \ldots, s_n| R \rangle\equiv \langle S| R \rangle.$$This means that any element $v$ can be (perhaps non-uniquely) represented in the form (see \cite{lyndon-schupp-1980}):  $$v=a_1\ldots a_k,$$
where $a_1,\ldots, a_k$ are generating elements of $G$ or their inverses
$$a_1=s_{i_1}^{\pm 1},\ldots,a_k= s_{i_k}^{\pm 1}.$$

In terms of groupoid of adjoint action, it means that any morphism $  (u,v)=(u,a_1\ldots a_k)\in \Hom(\Gamma)$ can be decomposed into a composition of morphisms in the following way
\begin{multline}\label{decomp}
(u,a_1\ldots a_k)=(u , a_1)\circ (a_1^{-1}u , a_2)\circ\ldots\circ\\
\circ(a_{k-2}^{-1}\ldots a_1^{-1} u, a_{k-1})\circ(a_{k-1}^{-1}\ldots a_1^{-1} u, a_k).
\end{multline}
 Now consider a locally-finite character $\chi\in X_1^{fin}(\Gamma)$. Since $\chi$ is locally-finite, then for any generating element $s_i$ there is only a finite number of morphisms of the form $(h,s_i)$, such that $\chi$ does not vanish on these morphisms (similarly to \cite{Arutyunov_1}). Hence  there can only be a finite number of subgroupoids $\Gamma_{[u_i]}$ containing morphisms of the form $(h,s_i)$, such that $\chi((h,s_i))\neq 0$. 
 
 Similarly for any generating element's inverse $s_i^{-1}$.

Since $G$ is finitely generated, all morphisms $(*,g) $ such that $g\in S\sqcup S^{-1}$ and $\chi((*,g))\neq 0$ are contained in a finite union of  a finite unions of subgroupoids $  \Gamma_{[u_i]}$.  This implies that all such morphisms are contained in a finite union of subgroupoids, and without any loss of generality we can denote it as $\bigsqcup_{i=1}^m \Gamma_{[u_i]} $.

Now recall that any morphism $(u,v)$ can be decomposed in the form \eqref{decomp}. We conclude that if $\chi((u,v))\neq 0$, then $(u,v)\in \Hom(\bigsqcup_{i=1}^m \Gamma_{[u_i]}) $. Hence  support of $\chi$ lies in $\bigsqcup_{i=1}^m \Gamma_{[u_i]}$.

\end{proof}
\begin{corollary}\label{FC2}
 The following decomposition holds
 \begin{equation}
X_0^{fin}(\Gamma)=\bigoplus_{[u]\in G^G} X_0^{fin}(\Gamma_{[u]}).
 \end{equation}
\end{corollary}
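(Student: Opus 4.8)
The plan is to derive this decomposition directly from Lemma \ref{FC1}, which already establishes the analogous statement for $X_1^{fin}$. Since $X_0^{fin}(\Gamma)$ is by definition the submodule of $X_1^{fin}(\Gamma)$ consisting of those locally-finite characters that are trivial on loops, and likewise $X_0^{fin}(\Gamma_{[u]})=X_0^{fin}(\Gamma)\cap X_1^{fin}(\Gamma_{[u]})$, the entire task reduces to checking that the ``trivial on loops'' condition is compatible with the direct sum decomposition furnished by Lemma \ref{FC1}.

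First I would take an arbitrary $\chi\in X_0^{fin}(\Gamma)$ and invoke Lemma \ref{FC1} to write it uniquely as a finite sum $\chi=\sum_{[u]\in G^G}\chi_{[u]}$ with $\chi_{[u]}\in X_1^{fin}(\Gamma_{[u]})$. The crucial observation is that every loop lives in a single connected component: if $\varphi\in\Hom(u',u')$ for some $u'\in[u]$, then $\varphi$ is a morphism of $\Gamma_{[u]}$, and each summand $\chi_{[v]}$ with $[v]\neq[u]$ vanishes on $\varphi$ by construction (it is supported on $\Gamma_{[v]}$). Hence $\chi(\varphi)=\chi_{[u]}(\varphi)$ for every such loop. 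This is exactly the place where the connectedness of $\Gamma_{[u]}$ (the third bullet of the Statement after the definition of $\Gamma_{[u]}$) is used.

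Next I would exploit that $\chi$ is trivial on loops: for any loop $\varphi$ in $\Gamma_{[u]}$ the previous identity gives $\chi_{[u]}(\varphi)=\chi(\varphi)=0$, so each $\chi_{[u]}$ is itself trivial on loops, i.e. $\chi_{[u]}\in X_0^{fin}(\Gamma_{[u]})$. This yields the inclusion $X_0^{fin}(\Gamma)\subseteq\bigoplus_{[u]\in G^G}X_0^{fin}(\Gamma_{[u]})$. The reverse inclusion is immediate: a finite sum of characters that are each locally finite and trivial on loops is again locally finite (as in part~1 of the proof of Lemma \ref{FC1}) and trivial on loops, since any linear combination of trivial-on-loops characters is trivial on loops by Fact \ref{character_properties}.

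Finally, because the summands $X_0^{fin}(\Gamma_{[u]})$ are realized as submodules of the components $X_1^{fin}(\Gamma_{[u]})$ whose sum was shown to be direct in Lemma \ref{FC1}, the sum $\sum_{[u]}X_0^{fin}(\Gamma_{[u]})$ is automatically direct, and the two inclusions combine to the claimed equality. I do not anticipate a serious obstacle here: the only substantive point is the locality of loops to connected components, after which everything is a formal consequence of Lemma \ref{FC1}.
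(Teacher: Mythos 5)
Your proof is correct, but it reaches the corollary by a different route than the paper does. The paper's proof is a one-line instruction to repeat the argument of Lemma \ref{FC1} verbatim with $X_0^{fin}$ in place of $X_1^{fin}$: one re-runs the finite-generation argument (decomposing morphisms into generators, bounding the support) to show that a locally finite, trivial-on-loops character is supported in finitely many components, whose restrictions are again trivial on loops. You instead use Lemma \ref{FC1} as a black box: you decompose $\chi\in X_0^{fin}(\Gamma)\subseteq X_1^{fin}(\Gamma)$ into its components $\chi_{[u]}\in X_1^{fin}(\Gamma_{[u]})$ via the already-established direct sum, and then check that the trivial-on-loops condition descends to each component. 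The key point you supply --- that every loop lies in a single connected component $\Gamma_{[u]}$, so $\chi(\varphi)=\chi_{[u]}(\varphi)$ for any loop $\varphi$ in $\Gamma_{[u]}$, forcing $\chi_{[u]}(\varphi)=0$ --- is exactly right, and directness of the sum of the submodules $X_0^{fin}(\Gamma_{[u]})\subseteq X_1^{fin}(\Gamma_{[u]})$ is indeed inherited from Lemma \ref{FC1}; the reverse inclusion follows from Fact \ref{character_properties} as you say. Your approach buys economy and generality: it never re-invokes the finite generation of $G$, and it shows the corollary is a purely formal consequence of the $X_1^{fin}$ decomposition together with the fact that ``trivial on loops'' is a condition local to connected components. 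The paper's approach buys only self-containedness by repetition; your argument can be read as the honest justification of the paper's ``the proof is completely the same'' remark.
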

\begin{proof}
The proof for this statement is completely the same as for the statement above.
\end{proof}
\begin{theorem}\label{FC3}
If $G$ is a finitely generated FC-group, all weakly-inner derivations of $A[G]$ are inner.
\end{theorem}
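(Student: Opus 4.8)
The statement asserts that $\Inn^{*}=\Inn$ for a finitely generated FC-group, i.e.\ that every weakly-inner derivation is actually inner. By definition a weakly-inner $d$ has character $\chi_{d}\in X_{0}^{fin}(\Gamma)$: it is locally finite (being the character of a derivation) and trivial on loops. By Lemma \ref{weakly_inner} such a $\chi_{d}$ is automatically potential and can be written as a \emph{possibly infinite} sum $\sum_{g\in G}C_{g}\chi_{\textbf{ad}_{g}}$; the single extra ingredient needed for $d$ to be inner is that this sum be \emph{finite}. So the whole plan is to show that, under the finitely-generated FC hypothesis, the coefficient function $g\mapsto C_{g}$ has finite support.

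First I would invoke Corollary \ref{FC2} to write $\chi_{d}=\sum_{i=1}^{m}\chi_{u_{i}}$ as a \emph{finite} sum, with each $\chi_{u_{i}}\in X_{0}^{fin}(\Gamma_{[u_{i}]})$ supported on a single subgroupoid $\Gamma_{[u_{i}]}$; finiteness of the number of summands is exactly what finite generation buys us, via Lemma \ref{FC1} and its corollary. Consequently $supp(\chi_{d})\subseteq\bigsqcup_{i=1}^{m}\Gamma_{[u_{i}]}$, and the only objects of $\Gamma$ touched by $\chi_{d}$ lie in $\bigcup_{i=1}^{m}[u_{i}]$. Because $G$ is an FC-group, each conjugacy class $[u_{i}]$ is finite, so this set of objects is a \emph{finite} subset of $G$.

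Next I would make the potential explicit. Following the $(1\Rightarrow 2)$ construction in Lemma \ref{weakly_inner}, I choose a potential $p:\Ob(\Gamma)\to A$ for $\chi_{d}$ that vanishes at the chosen base point of each component and vanishes identically on every component outside $\bigsqcup_{i=1}^{m}\Gamma_{[u_{i}]}$; then $p$ is supported on the finite set $\bigcup_{i=1}^{m}[u_{i}]$. The key algebraic identity is that the adjoint character $\chi_{\textbf{ad}_{a}}$ is the coboundary $\partial\delta_{a}$ of the indicator function $\delta_{a}$ of the single object $a$, which is immediate from the case distinction defining $\chi_{\textbf{ad}_{a}}$ together with $t(h,g)=hg^{-1}$, $s(h,g)=g^{-1}h$. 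Writing $p=\sum_{a}p(a)\,\delta_{a}$ and applying $\partial$ then gives
\begin{equation*}
\chi_{d}=\partial p=\sum_{a\in G}p(a)\,\chi_{\textbf{ad}_{a}},
\end{equation*}
so the coefficients are simply $C_{a}=p(a)$. Since $p$ has finite support, only finitely many $C_{a}$ are nonzero, the sum is finite, and hence $d\in\Inn$; this proves $\Inn^{*}\subseteq\Inn$, and the reverse inclusion $\Inn\subseteq\Inn^{*}$ is already known.

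The main obstacle is precisely the finite-support claim for the coefficient function, and it is genuinely where both hypotheses enter: finite generation (through Corollary \ref{FC2}) guarantees that the support of $\chi_{d}$ meets only finitely many components, while the FC condition guarantees that each such component has only finitely many objects. Either hypothesis alone would leave room for an honestly infinite coefficient sequence — exactly the gap illustrated by the infinite-$G$, $A=\C$ counterexamples cited after the definition of weakly-inner derivations — so the argument must combine them.
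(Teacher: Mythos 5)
Your proof is correct and takes essentially the same route as the paper's: finite generation enters through Corollary \ref{FC2} (via Lemma \ref{FC1}) to confine $\chi_d$ to finitely many components, the FC hypothesis makes each component have only finitely many objects, and Lemma \ref{weakly_inner} converts trivial-on-loops into a sum of adjoint characters whose coefficients are potential values. The only cosmetic difference is that you identify the coefficients globally through the coboundary identity $\chi_{\textbf{ad}_a}=\partial\delta_a$, whereas the paper proves a component-wise lemma (a weakly inner derivation supported on a single finite class is inner, since only $g\in[u]$ can contribute) and then assembles finitely many finite sums — the same finiteness bookkeeping in slightly different packaging.
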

In order to prove this theorem, we need the following lemma.
\begin{lemma}
If $|[u]|<\infty$, then all weakly inner derivations whose characters are supported on $\Gamma_{[u]}$ are inner derivations.
\end{lemma}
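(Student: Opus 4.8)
The plan is to leverage Lemma \ref{weakly_inner}, which already characterizes weakly-inner derivations as exactly those whose character is potential, together with the finiteness hypothesis on $[u]$. The only gap between \emph{weakly inner} ($\Inn^*$) and \emph{inner} ($\Inn$) is that the former permits an infinite sum $\chi_d = \sum_{g} C_g \chi_{\textbf{ad}_g}$, whereas the latter demands that this sum be finite. So the whole task reduces to showing that, when $supp(\chi_d) \subset \Gamma_{[u]}$ and $|[u]| < \infty$, a representing sum can be chosen finite.

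First I would fix a weakly-inner derivation $d$ with $supp(\chi_d) \subset \Gamma_{[u]}$. Being weakly inner, $\chi_d$ is trivial on loops, so by Lemma \ref{weakly_inner} it is potential: there is a function $p : \Ob(\Gamma) \to A$ with $\chi_d = \partial p$. I would then record the bookkeeping identity $\chi_{\textbf{ad}_g} = \partial \delta_g$, where $\delta_g$ is the indicator of the object $g$; this is immediate from the explicit formula for $\chi_{\textbf{ad}_g}$, and it shows that writing $p = \sum_g C_g \delta_g$ yields $\chi_d = \partial p = \sum_g C_g \chi_{\textbf{ad}_g}$ with $C_g = p(g)$. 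Thus the number of nonzero coefficients $C_g$ equals the size of the support of $p$, and it suffices to arrange that $p$ has finite support.

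The crux is then the choice of $p$. Using $\Gamma = \bigsqcup_{[v] \in G^G} \Gamma_{[v]}$, note that on every component $\Gamma_{[v]}$ with $[v] \neq [u]$ the character $\chi_d$ vanishes, so $\partial p = 0$ there and $p$ is forced to be constant; since the potential is determined only up to an additive constant on each component, I set that constant to $0$. On $\Gamma_{[u]}$ I normalize $p(u_0) = 0$ at one fixed $u_0 \in [u]$ and extend it by the construction in the proof of Lemma \ref{weakly_inner} $(1 \Rightarrow 2)$. Because $\Gamma$ is a disjoint union, $\partial p$ on any morphism involves only the values of $p$ at its (same-component) endpoints, so this piecewise choice still satisfies $\chi_d = \partial p$ globally. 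Now $p$ vanishes off $[u]$, and since $|[u]| < \infty$, only finitely many $C_g = p(g)$ are nonzero. Hence $d = \sum_{g \in [u]} C_g \textbf{ad}_g$ is a finite sum, so $d \in \Inn$.

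I do not anticipate a serious obstacle: the content is concentrated entirely in the observation that finiteness of the conjugacy class forces finiteness of the support of the potential. The one point needing care is the legitimacy of choosing $p$ to vanish outside $[u]$, i.e. checking that normalizing the component-wise additive constants does not spoil $\chi_d = \partial p$; this is precisely where the disjoint-union decomposition of $\Gamma$ is used.
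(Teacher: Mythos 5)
Your proof is correct and takes essentially the same route as the paper: the paper likewise invokes Lemma \ref{weakly_inner} to write $\chi_d=\sum_{g\in G}C_g\chi_{\textbf{ad}_g}$ and then uses the disjoint-component structure of $\Gamma$ to cut the sum down to $g\in[u]$, which is finite. Your passage through the potential $p$ with $C_g=p(g)$, normalized to vanish on the components other than $\Gamma_{[u]}$, is exactly the mechanism behind the paper's implication $(2\Rightarrow 3)$ of that lemma, so it merely makes explicit a step the paper leaves implicit.
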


\begin{proof}

If we apply lemma \eqref{weakly_inner}, we know that any trivial-on-loops character $\chi$ on  $\Gamma_{[u]}$ can be expressed as a (possibly infinite) sum of adjoint derivation characters
$$\chi=\sum_{g\in G}C_g \chi_{\textbf{ad}_g}.$$

However, if $supp(C_g\chi_{\textbf{ad}_g})\subset \Gamma_{[u]}$, then $g\in \Ob(\Gamma_{[u]})=[u]$. It follows that
$$\chi=\sum_{g\in [u]}C_g \chi_{\textbf{ad}_g}.$$
Since $[u]$ is finite, $\chi$ is a finite sum of adjoints. Hence  it is a character of an inner derivation.
\end{proof}

\begin{proof}
\textit{(Theorem\eqref{FC3})}

Let $d$ be a weakly inner derivation. Then its character $\chi_d$ is locally-finite and trivial on loops. Due to previous lemma \eqref{FC2}, $\chi_d$ lies in a direct sum $X_0^{fin}(\Gamma_{[u]})$. Therefore, it is a finite sum of characters, whose supports lie in particular subgroupoids. It follows that $\chi_d$ is a finite sum of characters, each of those corresponding to a finite sum of adjoints. 
Thus, $\chi_d$ is a finite sum of finite sums of adjoint derivations' characters, and its corresponding derivation $d$ is a finite sum of adjoints. This means that $d$ is an inner derivation.

\end{proof}
Now we can derive the structure theorem for derivations of finite group rings over finitely generated FC-groups.

\begin{theorem}
\label{th-FC-decomp}
Let $G$ be a finitely generated FC-group, $A$ be a unital commutative ring. Then 
\begin{equation}
\Der(A[G])\cong \Inn(A[G])\oplus \bigoplus_{[u]\in G^G} \Hom_{Ab}(Z(u),A).
\end{equation}
\end{theorem}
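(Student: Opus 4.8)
The plan is to assemble the statement entirely from the character dictionary and the structural lemmas already established, so that no fresh computation with derivations is required. The starting point is the canonical $A$-module isomorphism $\Der(A[G])\cong X_1^{fin}(\Gamma)$. First I would invoke Lemma~\ref{FC1}, which holds precisely because $G$ is finitely generated and FC, to split the module of locally finite characters along conjugacy classes:
\[
X_1^{fin}(\Gamma)=\bigoplus_{[u]\in G^G} X_1^{fin}(\Gamma_{[u]}).
\]
This reduces the task to analysing each summand $X_1^{fin}(\Gamma_{[u]})$ on its own.

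For a fixed class $[u]$, the FC hypothesis forces $|[u]|<\infty$, so Lemma~\ref{main} applies and its splitting (recorded in Corollary~\ref{decomp_subgroupoid}) gives
\[
X_1^{fin}(\Gamma_{[u]})\cong X_0^{fin}(\Gamma_{[u]})\oplus X_1^{fin}(\Hom(u,u)).
\]
I would then identify the loop term. A morphism in $\Hom(u,u)$ has the form $(gu,g)$ with $g\in Z(u)$, so for each fixed second coordinate there is at most one loop over $u$; hence every character on $\Hom(u,u)$ is automatically locally finite and $X_1^{fin}(\Hom(u,u))=X_1(\Hom(u,u))$, which by Lemma~\ref{loop_group} equals $\Hom_{Ab}(Z(u),A)$.

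Substituting these per-class isomorphisms into the class-wise decomposition and regrouping the direct sum (using $\bigoplus_i(M_i\oplus N_i)\cong(\bigoplus_i M_i)\oplus(\bigoplus_i N_i)$), I would obtain
\[
\Der(A[G])\cong\Big(\bigoplus_{[u]\in G^G}X_0^{fin}(\Gamma_{[u]})\Big)\oplus\Big(\bigoplus_{[u]\in G^G}\Hom_{Ab}(Z(u),A)\Big).
\]
It remains to recognise the first factor as $\Inn(A[G])$: Corollary~\ref{FC2} collapses it to $X_0^{fin}(\Gamma)$, the module of locally finite trivial-on-loops characters corresponding to the weakly inner derivations $\Inn^*$, and Theorem~\ref{FC3} then gives $\Inn^*=\Inn$ in the finitely generated FC setting. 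This yields the claimed decomposition.

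The step I expect to require the most care is not any single computation but the compatibility of the decompositions: one must check that the class-wise splitting of Lemma~\ref{main} for each $[u]$ is compatible with the global splittings of Lemma~\ref{FC1} and Corollary~\ref{FC2}, so that regrouping the direct sum is legitimate and the assembled inner part is genuinely the sum of the pieces $X_0^{fin}(\Gamma_{[u]})$. Finite generation is exactly what makes this work, since it is the hypothesis underlying both Lemma~\ref{FC1} and Theorem~\ref{FC3}; without it a locally finite character could have support meeting infinitely many classes, and neither the decomposition nor the identification $\Inn^*=\Inn$ would be available.
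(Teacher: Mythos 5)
Your proposal is correct and follows essentially the same route as the paper's proof: reduce to characters via $\Der(A[G])\cong X_1^{fin}(\Gamma)$, split along conjugacy classes by Lemma~\ref{FC1}, apply the split exact sequence of Lemma~\ref{main} (Corollary~\ref{decomp_subgroupoid}) classwise, identify the loop term with $\Hom_{Ab}(Z(u),A)$ via Lemma~\ref{loop_group}, and recognise the trivial-on-loops part as $\Inn$ using Corollary~\ref{FC2} and Theorem~\ref{FC3}. Your direct observation that loops over $u$ are in bijection with $Z(u)$ (one per second coordinate, hence automatic local finiteness) is a harmless variant of the paper's appeal to the corollary of Lemma~\ref{finiteness_intersection}.
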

\begin{proof}

As $A$-modules (not as Lie algebras):
$$\Der(A[G])\cong X_1^{fin}(\Gamma).$$

Following the lemma \eqref{FC1},
$$X_1^{fin}(\Gamma)=\bigoplus_{[u]\in G^G} X_1^{fin}(\Gamma_{[u]})$$
Following the theorem \eqref{main}, we can combine the decomposition of character modules on subgroupoids \eqref{decomp_subgroupoid} 
$$X_1^{fin}(\Gamma_{[u]})\cong X_0^{fin}(\Gamma_{[u]}) \oplus X_1^{fin}(\Hom(u,u))$$
From the corollary of lemma \eqref{finiteness_intersection}, we know that all characters on $\Gamma_{[u]}$ are locally-finite. In addition, we know that there is an explicit desription of characters on loop hom-sets \eqref{loop_group}. Therefore, we have:
$$X_1^{fin}(\Gamma_{[u]})\cong X_0^{fin}(\Gamma_{[u]}) \oplus \Hom_{Ab}(Z(u), A)$$

Now recall that we have an $A$-module isomorphism between $\Inn^*$ and $X_0^{fin}$. If we combine it with corollary \eqref{FC2}, we obtain the isomorphism:
$$\Inn^*\cong X_0^{fin}= \bigoplus_{[u]\in G^G} X_0^{fin}(\Gamma_{[u]}).$$
Now recall that due to the theorem \eqref{FC3}, we have $\Inn^*= \Inn$. Regrouping terms in the equation above, we obtain the theorem claim:
\begin{multline}
\Der(A[G])\cong X_1^{fin}(\Gamma)=\bigoplus_{[u]\in G^G} X_1^{fin}(\Gamma_{[u]})\cong \\ 
\cong \bigoplus_{[u]\in G^G} \big(X_0^{fin}(\Gamma_{[u]}) \oplus \Hom_{Ab}(Z(u), A)\big) 
\cong\Inn^*(A[G])\bigoplus_{[u]\in G^G} \Hom_{Ab}(Z(u), A)\cong \\ \cong \Inn(A[G])\bigoplus_{[u]\in G^G} \Hom_{Ab}(Z(u), A).
\end{multline}

\end{proof}

In the next section, we will be interested in the case of finite groups. So the following corollary will be important.

 \begin{corollary}\label{corol-finitegroup}
In case $G$ is finite, $\Inn^*(A[G])=\Inn(A[G])$.
\end{corollary}

\begin{proof}
This is a special case of theorem \eqref{FC3}.
\end{proof}

\section{Finite group case}
In this section group $G$ is assumed to be finite. In this case we see that its groupoid of adjoint action $\Gamma$ has  both finite number of objects and morphisms. Thus, any character has finite support, so any character on $\Gamma$ is a locally-finite character.

Thus, taking in account the corollary \ref{corol-finitegroup} we get
\begin{proposition}\label{prop-finite-outer}
$\Out^*(A[G])\cong \Out(A[G])$.
\end{proposition}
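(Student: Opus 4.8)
The plan is to reduce the statement to the already-proved coincidence of the inner and weakly-inner submodules. By analogy with the definition $\Out = \Der/\Inn$, the algebra $\Out^*(A[G])$ is the quotient $\Der(A[G])/\Inn^*(A[G])$; this quotient is a well-defined Lie algebra precisely because the lemma above established that $\Inn^*$ is an ideal in $\Der$.

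First I would invoke Corollary \ref{corol-finitegroup}, which states that for finite $G$ one has the equality $\Inn^*(A[G]) = \Inn(A[G])$ of submodules of $\Der(A[G])$. This is essentially the entire content of the proposition, and it is itself a special case of Theorem \ref{FC3}: every weakly inner derivation of a finitely generated FC-group ring, and in particular of a finite group ring, is inner. The finiteness of $G$ enters only through this corollary, since it guarantees that the (a priori possibly infinite) sum of adjoints expressing a trivial-on-loops character is supported on the finite conjugacy class and hence is a genuine finite sum.

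Since $\Inn^*$ and $\Inn$ coincide as subsets of $\Der$, and not merely up to isomorphism, the quotients $\Der/\Inn^*$ and $\Der/\Inn$ are literally the same object, so $\Out^*(A[G]) = \Der(A[G])/\Inn^*(A[G]) = \Der(A[G])/\Inn(A[G]) = \Out(A[G])$ and the canonical comparison map is the identity. Consequently there is no genuine obstacle: the whole difficulty is packaged into Theorem \ref{FC3}, and the only point worth recording here is that the coincidence $\Inn^* = \Inn$ is an honest set-theoretic equality, so no further comparison of the quotient structures is required.
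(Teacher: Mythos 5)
Your proof is correct and follows essentially the same route as the paper: the paper likewise deduces the proposition directly from Corollary \ref{corol-finitegroup} (itself a special case of Theorem \ref{FC3}), the point being that $\Inn^*(A[G])=\Inn(A[G])$ holds as an actual equality of submodules of $\Der(A[G])$, so the quotients $\Der/\Inn^*$ and $\Der/\Inn$ coincide. Your additional remark that well-definedness of the quotient rests on $\Inn^*$ being an ideal in $\Der$ is consistent with the paper's earlier lemma and adds nothing divergent.
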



Let us refine the theorem \ref{th-FC-decomp} for the case of finite group.

\begin{theorem}\label{finite_result}
There is a way to describe all derivations of $A[G]$ in case $G$ is finite:
\begin{enumerate}
\item $\Der(A[G])\cong\Inn(A[G]) \oplus \big(\bigoplus_{[u]\in G^G}   \Hom_{Ab}(Z(u), A)\big),$
\item $\Out(A[G])\cong \bigoplus_{[u]\in G^G}   \Hom_{Ab}(Z(u), A).$
\end{enumerate}
Notice that these isomorphisms are isomorphims of $A$-modules, and they are not canonical.
\end{theorem}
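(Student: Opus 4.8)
The plan is to deduce both statements directly from Theorem \ref{th-FC-decomp}, since a finite group satisfies all of its hypotheses. First I would observe that any finite group $G$ is trivially finitely generated (for instance, by the full set of its elements) and is an FC-group, because every conjugacy class $[u]\subseteq G$ is automatically finite when $G$ is finite. Thus the hypotheses of Theorem \ref{th-FC-decomp} are met, and applying it immediately yields statement (1):
$$\Der(A[G])\cong \Inn(A[G])\oplus \bigoplus_{[u]\in G^G}\Hom_{Ab}(Z(u),A).$$

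For statement (2), I would pass to the quotient $\Out(A[G])=\Der(A[G])/\Inn(A[G])$. Because the isomorphism in (1) realizes $\Inn(A[G])$ as a direct $A$-module summand of $\Der(A[G])$, with complementary summand $\bigoplus_{[u]\in G^G}\Hom_{Ab}(Z(u),A)$, quotienting by $\Inn(A[G])$ collapses the first summand and leaves the second intact. This gives
$$\Out(A[G])\cong \bigoplus_{[u]\in G^G}\Hom_{Ab}(Z(u),A),$$
as required. Here I am implicitly using that for finite $G$ we have $\Inn^*(A[G])=\Inn(A[G])$ (Corollary \ref{corol-finitegroup}), so that the summand split off in Theorem \ref{th-FC-decomp} really is $\Inn$ and not merely $\Inn^*$.

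I would close by emphasizing that these are isomorphisms of $A$-modules, not of Lie algebras, and that they are not canonical. The direct-sum splitting ultimately rests on the section $f\colon\chi_u\mapsto\chi$ built in the proof of Lemma \ref{main}, which depends on the arbitrary choice of a basepoint in each conjugacy class together with connecting morphisms $\theta$; since these choices are not canonical, neither is the resulting complement of $\Inn(A[G])$ inside $\Der(A[G])$. There is no genuine obstacle in this proof: it is a specialization of Theorem \ref{th-FC-decomp}, and the only points requiring care are verifying that a finite group fits the finitely generated FC-group framework and keeping track of the bookkeeping that the summand isomorphic to $\Inn$ is precisely what is annihilated upon forming $\Out$. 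The main thing worth stating explicitly is the non-canonicity, to warn the reader that the complement is a choice rather than a distinguished subobject.
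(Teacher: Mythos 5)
Your proof is correct, and in substance it is the same decomposition the paper uses; the only real difference is organizational. You obtain part (1) by citing Theorem \ref{th-FC-decomp} after checking that a finite group is a finitely generated FC-group, whereas the paper does not invoke that theorem in its proof: it re-runs the character-module argument directly in the finite setting, using that for finite $G$ every character is automatically locally finite ($X_1^{fin}(\Gamma)=X_1(\Gamma)$) and that $X_1(\Gamma)\cong\bigoplus_{[u]\in G^G}X_1(\Gamma_{[u]})$ is elementary when there are finitely many conjugacy classes, so it never needs the finitely-generated-FC argument of Lemma \ref{FC1}; it then applies Lemma \ref{main}, Lemma \ref{loop_group}, and the identification $X_0(\Gamma)\cong\Inn$. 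Your route is shorter and buys logical economy; the paper's buys a self-contained and more elementary proof for the finite case. For part (2) you correctly identify the one delicate point: an abstract isomorphism $\Der\cong\Inn\oplus M$ does not by itself compute $\Der/\Inn$, so one needs that the splitting is internal with the genuine submodule $\Inn$ (not merely $\Inn^*$) as the split summand, which you justify via Corollary \ref{corol-finitegroup}; the paper disposes of this same point by citing Proposition \ref{prop-finite-outer}. Your closing remark on non-canonicity, traced to the choice of basepoints and connecting morphisms $\theta$ in the section $f$ of Lemma \ref{main}, matches the paper's caveat.
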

\begin{proof}
1. As $A$-modules (not as Lie algebras):
$$\Der\cong X_1^{fin}(\Gamma).$$

Due to finiteness of $G$:
$$ X_1^{fin}(\Gamma)= X_1(\Gamma).$$
As an $A$-module of $A$-valued functions over a finite disjoint union of domains, $X_1(\Gamma)$ can be decomposed:

$$ X_1(\Gamma)\cong\bigoplus_{[u]\in G^G} X_1(\Gamma_{[u]}).$$
Now, due to the theorem \eqref{main}
\begin{multline*}
X_1(\Gamma_{[u]})= X_1^{fin}(\Gamma_{[u]})\cong X_0^{fin}(\Gamma_{[u]}) \oplus X_1^{fin}(\Hom(u,u))\equiv\\ 
\equiv X_0(\Gamma_{[u]}) \oplus X_1(\Hom(u,u)).
\end{multline*}
Hence  
$$X_1(\Gamma)\cong\bigoplus_{[u]\in G^G} \big(X_0(\Gamma_{[u]}) \oplus X_1(\Hom(u,u))\big).$$
On one hand, we know that 
$$\bigoplus_{[u]\in G^G}X_0(\Gamma_{[u]})= X_0(\Gamma)\cong \Inn.$$
On the other hand,  $\Hom(u,u)$ is a group with respect to morphism composition, and this group is isomorphic to centralizer $Z(u)$. In other words, characters on finite$\Hom(u,u)$ are nothing but additive homomorphisms from $Z(u)$ to $A$. I.e. we can delineate the following $A$-module homomorphism
 $$X_1(\Hom(u,u))\cong\Hom_{Ab}(Z(u), A).$$
Regrouping terms in the equation above, we obtain that
$$\Der\cong X_1(\Gamma)\cong\Inn \oplus \bigoplus_{[u]\in G^G}   \Hom_{Ab}(Z(u), A).$$
2. Trivially follows from the first part and proposition \ref{prop-finite-outer}.
\end{proof}

\subsection{Inner derivations}
If the group $G$ is finite,  we can suppose that the dimension of the space of inner derivations is the difference between number of generators $ad_g, g\in G$ and  the dimension of centralizer of a group algebra $A[G]$, which is equal to the number of conjugacy classes $|G^G|$ ( \href{https://ysharifi.wordpress.com/2011/02/08/center-of-group-algebras/}{here} is the proof for the case $A=\C$). In this paragraph we prove this formula for arbitrary $A$:
$$dim(\Inn(A[G]))=|G|-|G^G|.$$In order to prove this, we need the following well-known fact (see \cite{Brakenhof}, chapter 2).

For a fixed conjugacy class $[u_i]\in G^G$, denote $K_i:=\sum_{g\in [u_i]}g$.
\begin{fact}\label{center}
An element $x$ lies in the center of $A[G]$ if and only if $x=C_1 K_1+\ldots + C_n K_n$, for some elements $C_i\in A$.
\end{fact}

\begin{theorem}
Inner derivations can be described in terms of an $A$-module presentation
$$\Inn=\langle \textbf{ad}_{g}, g\in G |\sum_{g\in [u]}\textbf{ad}_{g}=0 \text{ for any }[u]\in G^G \rangle$$
\end{theorem}
Details on module presentations can be found at \cite{Herrmanns}. This theorem is equivalent to the following
\begin{corollary}\label{inner_derivations}

Let us choose some $u_1, \ldots, u_n$, such that all the $u_i$'s belong to different conjugacy classes.  Then  $\Inn(A[G])$  is a free $A$-module, with the following basis: $$\big\{\textbf{ad}_{g}|g\in G\setminus \{u_1, \ldots, u_n\}\big\}.$$

\end{corollary}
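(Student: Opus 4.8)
The plan is to establish the module presentation in the Theorem and then extract the free basis asserted in Corollary \ref{inner_derivations}. The key is to understand precisely the kernel of the surjection from the free module on the symbols $\{\textbf{ad}_g : g\in G\}$ onto $\Inn(A[G])$. First I would set up the natural $A$-module surjection $\Phi: F \to \Inn$, where $F$ is the free $A$-module with basis indexed by $G$, sending the basis element $e_g \mapsto \textbf{ad}_g$. By the very definition of $\Inn$ (finite sums $\sum_g C_g\,\textbf{ad}_g$), this map is onto. The whole content of the theorem is the computation $\ker\Phi = \langle \sum_{g\in[u]} e_g : [u]\in G^G\rangle$.

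The main work is showing $\ker\Phi$ is exactly the submodule generated by the class-sum relations. One inclusion is the statement that $\sum_{g\in[u]}\textbf{ad}_g = 0$ for each conjugacy class $[u]$; I would verify this by noting $\sum_{g\in[u]}\textbf{ad}_g = \textbf{ad}_{K_{[u]}}$ where $K_{[u]}=\sum_{g\in[u]}g$, and then invoking Statement \ref{center}: $K_{[u]}$ is central in $A[G]$, so $\textbf{ad}_{K_{[u]}}=0$. For the reverse inclusion, suppose $\sum_g C_g\,\textbf{ad}_g = 0$, i.e. $\textbf{ad}_a = 0$ where $a=\sum_g C_g g$. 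This forces $a\in Z(A[G])$, so by Statement \ref{center} we have $a = \sum_i D_i K_i$ for some $D_i\in A$; comparing coefficients shows the relation vector $(C_g)_g$ is an $A$-combination of the class-sum vectors, hence lies in the claimed submodule. This is the step I expect to be the main obstacle, since it relies entirely on the precise characterization of $Z(A[G])$ over a general commutative ring $A$, and one must be careful that the class sums $K_i$ form an $A$-basis of the center (linear independence is clear since distinct classes are disjoint subsets of $G$).

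With the presentation in hand, the Corollary follows by a direct basis count. The relations $\sum_{g\in[u]}e_g$, one per conjugacy class, are ``triangular'': for each class $[u_i]$ I would use the relation to solve for the distinguished element $\textbf{ad}_{u_i}$ in terms of the remaining generators $\{\textbf{ad}_g : g\in[u_i]\setminus\{u_i\}\}$. Concretely, choosing one representative $u_i$ per class and eliminating it via $\textbf{ad}_{u_i} = -\sum_{g\in[u_i], g\neq u_i}\textbf{ad}_g$, the remaining symbols $\{\textbf{ad}_g : g\in G\setminus\{u_1,\ldots,u_n\}\}$ generate $\Inn$. To see they are $A$-linearly independent (hence a free basis), I would observe that the only relations among all the $\textbf{ad}_g$ are $A$-combinations of the $n$ class-sum relations, and each such relation involves the eliminated generator $\textbf{ad}_{u_i}$ nontrivially; thus no nonzero relation survives among the reduced set. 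Finally, the dimension formula $\dim\Inn(A[G]) = |G| - |G^G|$ drops out immediately, as the free basis has $|G|-n = |G|-|G^G|$ elements.
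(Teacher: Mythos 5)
Your proposal is correct, and at its core it rests on the same two facts as the paper's own proof: that $\textbf{ad}_a=0$ exactly when $a$ lies in the center of $A[G]$, and that the center is the free $A$-module on the class sums $K_i$ (Statement \ref{center}). The organization, however, is genuinely different. The paper argues class by class: it first shows that $\{\textbf{ad}_g \mid g\in[u]\setminus\{g_m\}\}$ generates and is linearly independent inside the module of inner derivations whose characters are supported on $\Gamma_{[u]}$, and then glues these local bases together using the character-theoretic decomposition $\Inn(A[G])=\bigoplus_{[u]\in G^G}\Inn(\Gamma_{[u]})$ (Corollary \ref{FC2}). You instead work globally: you present $\Inn$ as a quotient of the free module $F$ on symbols $e_g$, compute $\ker\Phi$ exactly (the span of the class-sum vectors, which are independent because conjugacy classes are disjoint subsets of $G$), and then run a triangular elimination of one representative per class; independence of the remaining generators follows because any kernel element $\sum_i D_i R_i$ with some $D_i\neq 0$ has nonzero coefficient on $e_{u_i}$, which is excluded from the reduced set. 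Your route buys two things: it needs no character or groupoid machinery at all (pure linear algebra over $A$ plus Statement \ref{center}), and the explicit kernel computation makes rigorous the reverse inclusion that the paper's step 1 treats rather tersely (``combining this with the description of the center \dots we obtain the claim''). What the paper's route buys is integration with its broader framework: the per-class modules $\Inn(\Gamma_{[u]})$ and the support decomposition are objects it uses throughout the article, so its local-to-global proof structure is reused elsewhere, whereas your argument, while cleaner in isolation, is specific to this statement.
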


So let's prove the theorem with the corollary.
\begin{proof}
1)By the definition of $\Inn$, we know that $ \textbf{ad}_{g}, g\in G $ is a generating set for $\Inn$. In addition, we know that $ \textbf{ad}_{x}=0 \iff x$ lies in the center of the group algebra $A[G]$. Combining this with the description of the center of a group algebra, we obtain the claim of the theorem.

In order to prove the corollary, it suffices to prove the rest for any conjugacy class. 

So, we will fix a conjugacy class $[u]=\{g_1,\ldots, g_k\}$. Consider inner derivations, such that their characters are supported in $\Gamma_{[u]}$.  In addition, let $$m\in\{1,\ldots,k\}.$$

2) Firstly, we have to prove that $\{\textbf{ad}_{g}|g\in [u]\setminus g_m\}$ is truly a generating set for inner derivations supported on $\Gamma_{[u]}$. 

Denote $$\Inn(\Gamma_{[u]}):=\{d\in\Inn(A[G])|supp(\chi_d)\subset\Gamma_{[u]}\}.$$

Any inner derivation $d$ supported in $\Gamma_{[u]}$ is a linear combination of adjoints $\textbf{ad}_g$, such that $g\in [u]$. In addition, due to the lemma \eqref{center},$$\sum_{i } g_i\in C_{A[G]}(A[G])\implies 0=\textbf{ad}_{\sum_{i } g_i}=\sum_{i }\textbf{ad}_{g_i}.$$
Thus, $\textbf{ad}_{g_m}$ is a linear combination of others:

$$\textbf{ad}_{g_m}=\sum_{i, i\neq m} -\textbf{ad}_{g_i}.$$

3)Let $$\sum_{i, i\neq m} C_i \textbf{ad}_{g_i}=0$$
for some non-zero coefficients $C_i\in A$. Then for any $x\in A[G]$, we have 
\begin{equation}
0=(\sum_{i, i\neq m} C_i \textbf{ad}_{g_i})(x)=\sum_{i, i\neq m} C_i \textbf{ad}_{g_i}(x)=\sum_{i, i\neq m} C_i [g_i,x]
= [\sum_{i, i\neq m} C_i g_i,x].
\end{equation}
This means that this linear combination must lie in the group ring's center:
$$\sum_{i, i\neq m} C_i g_i\in C_{A[G]}(A[G]).$$ 
We see that this contradicts \eqref{center}. Therefore, all $C_i$'s must vanish, and $$
\{\textbf{ad}_{g}|g\in [u]\setminus g_m\}$$
must be linearly independent.

4) Due to the lemma above(\eqref{FC2}), $$\Inn(A[G])= \bigoplus_{[u]\in G^G} \Inn(\Gamma_{[u]}).$$As a consequence, $\Inn(A[G])$ is a free finitely generated module, spanned by
\begin{multline}
\bigsqcup_i\{\textbf{ad}_{g}|g\in [u_i]\setminus u_i\}=\\
=\{\textbf{ad}_{g}|g\in G\setminus \{u_1,\ldots, u_n|u_i\text{'s belong to different conjugacy classes }\}\}.
\end{multline}

\end{proof}
\begin{corollary}
If $A$ is a field, then $$dim(\Inn)=\sum_{i=1}^n(|[u_i]|-1)=|G|-|G^G|.$$
\end{corollary}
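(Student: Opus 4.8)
The plan is to read off the dimension directly from the free-module basis already established in Corollary \ref{inner_derivations}, followed by a short counting argument. First I would invoke Corollary \ref{inner_derivations}, which asserts that, after fixing representatives $u_1,\ldots,u_n$ of the $n=|G^G|$ distinct conjugacy classes, the module $\Inn(A[G])$ is free on the basis $\{\textbf{ad}_{g}\mid g\in G\setminus\{u_1,\ldots,u_n\}\}$. When $A$ is a field, a free $A$-module is a vector space, and its dimension equals the cardinality of any basis; hence $\dim(\Inn)$ is exactly the number of elements $g\in G$ that are not chosen representatives, namely $|G|-n=|G|-|G^G|$.

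For the intermediate equality I would group the basis elements by conjugacy class. From each class $[u_i]$ exactly one element, the representative $u_i$, is discarded, so that class contributes $|[u_i]|-1$ basis vectors. Summing over the $n$ classes yields $\dim(\Inn)=\sum_{i=1}^n(|[u_i]|-1)$. Since the conjugacy classes partition $G$ we have $\sum_{i=1}^n|[u_i]|=|G|$, and as there are exactly $n=|G^G|$ classes the expression reduces to $\sum_{i=1}^n(|[u_i]|-1)=|G|-|G^G|$, matching the first computation and closing the chain of equalities.

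There is no genuine obstacle here: essentially all the content is packaged in Corollary \ref{inner_derivations}, and what remains is the remark that over a field ``free of rank $r$'' means ``$r$-dimensional'', together with the elementary class-equation count $\sum_i|[u_i]|=|G|$. The only point deserving a word of care is that the basis is indexed by $G\setminus\{u_1,\ldots,u_n\}$ rather than by $G$, i.e.\ that precisely one adjoint generator per conjugacy class is redundant; but this is exactly the content guaranteed by Corollary \ref{inner_derivations}.
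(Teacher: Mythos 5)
Your proposal is correct and follows essentially the same route as the paper: both invoke the free basis $\{\textbf{ad}_{g}\mid g\in G\setminus\{u_1,\ldots,u_n\}\}$ from Corollary \ref{inner_derivations} and use the fact that a finitely generated free module over a field is a vector space whose dimension is the number of basis elements. Your explicit class-by-class count justifying the intermediate equality $\sum_{i=1}^n(|[u_i]|-1)=|G|-|G^G|$ is merely a spelled-out version of what the paper leaves implicit.
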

\begin{proof}
 It's a widely known fact that a free finitely generated module over a field is a vector space. Its dimension is equal to the number of basis elements, which we have calculated in the theorem \eqref{inner_derivations}.
\end{proof}

\subsection{Characters of outer derivations and torsion}
Now we will describe characters corresponding to the loop groups. This gives us criterion to find out whether all derivations are inner.
\begin{lemma}\label{char_condition}
Let $H$ be any subgroup of $G$, $A$ be a ring.
If $\varphi\in \Hom_{Ab}(H,A)$, then  for any $ \forall g\in H$ we have $ ord(g)\varphi (g)=0 \in A$  
\end{lemma}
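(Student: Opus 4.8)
The plan is to unwind the definition of $\Hom_{Ab}(H,A)$ and then exploit the finite-order relation $g^{ord(g)}=e$. Recall that $\varphi\in\Hom_{Ab}(H,A)$ means precisely that $\varphi$ is a homomorphism of abelian groups from the (multiplicatively written) group $H$ into the additive group $(A,+)$, that is, $\varphi(g_1g_2)=\varphi(g_1)+\varphi(g_2)$ for all $g_1,g_2\in H$. The whole statement is then just the image of the relation $g^{ord(g)}=e$ under $\varphi$.

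First I would record the normalization $\varphi(e)=0$, where $e$ is the identity of $H$: applying additivity to the relation $e=e\cdot e$ gives $\varphi(e)=\varphi(e)+\varphi(e)$, and subtracting $\varphi(e)$ yields $\varphi(e)=0$ in $A$. Next, fix $g\in H$ and set $n=ord(g)$, so that $g^n=e$. A straightforward induction on $k$, using additivity at each step, gives $\varphi(g^k)=k\,\varphi(g)$ for every $k\ge 1$, where $k\,\varphi(g)$ denotes the $k$-fold sum of $\varphi(g)$ taken in the additive group of $A$. Specializing to $k=n$ and combining $g^n=e$ with $\varphi(e)=0$ produces $n\,\varphi(g)=\varphi(g^n)=\varphi(e)=0$, which is exactly the desired identity $ord(g)\,\varphi(g)=0\in A$.

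There is essentially no genuine obstacle here; the result is a direct translation of the multiplicative relation $g^{ord(g)}=e$ into an additive one via $\varphi$. The only points requiring a moment of care are to read $\Hom_{Ab}$ correctly, so that the product of $ord(g)$ copies of $g$ is sent to the integer multiple $ord(g)\cdot\varphi(g)$ of a single element of $A$, and to note that the claim is meaningful only for elements of finite order, which is automatic in the finite-group and FC-group settings where this lemma will be applied.
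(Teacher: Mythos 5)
Your proof is correct and follows essentially the same route as the paper's: both reduce the claim to $0=\varphi(e)=\varphi(g^{ord(g)})=ord(g)\varphi(g)$, with your version merely spelling out the auxiliary steps ($\varphi(e)=0$ and $\varphi(g^k)=k\varphi(g)$ by induction) that the paper treats as immediate. Your closing remark that finiteness of $ord(g)$ is needed matches the paper's use of $|G|<\infty$, since this lemma sits in the finite-group section.
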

\begin{proof}

 Firstly, if $ord(g)=k$ in $G$, then $g$ has the same order in $H$.

Secondly, since $|G|<\infty$, $\forall g\in H$,  we have  $ord( g) <\infty$. Then $0=\varphi (e)=\varphi (g^{ord(g)})=ord(g)\varphi (g).$

\end{proof}

\begin{corollary}
Let $G$ be a  finite group, $A$ be a unital commutative ring.

Then $\Der(A[G])\neq \Inn(A[G])$ if and only if there exists $ [u]\in G^G$ and a nontrivial homomorphism $\varphi \in\Hom_{Ab}(Z(u),A)$ such that $\forall g\in Z(u), ord(g)\varphi (g)=0 \in A$.

\end{corollary}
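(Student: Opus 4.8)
The plan is to read the statement off the outer-derivation decomposition already in hand. By part 2 of Theorem \ref{finite_result} there is an $A$-module isomorphism
$$\Out(A[G]) = \Der(A[G])/\Inn(A[G]) \cong \bigoplus_{[u]\in G^G} \Hom_{Ab}(Z(u), A),$$
so $\Der(A[G]) \neq \Inn(A[G])$ holds exactly when the right-hand direct sum is nonzero. Since a direct sum of modules vanishes iff each summand vanishes, this nonvanishing is equivalent to the existence of at least one class $[u]\in G^G$ with $\Hom_{Ab}(Z(u), A) \neq 0$, i.e. for which there is a nontrivial additive homomorphism $\varphi\colon Z(u)\to A$.

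It then remains to fold in the order condition, and I would treat the two implications separately. For the backward direction, given $[u]$ and a nontrivial $\varphi \in \Hom_{Ab}(Z(u), A)$, this $\varphi$ is a nonzero element of the corresponding summand, so the direct sum is nonzero and hence $\Der(A[G]) \neq \Inn(A[G])$; note that here the order condition is not even needed. For the forward direction, if $\Der(A[G]) \neq \Inn(A[G])$ then by the isomorphism above some $\Hom_{Ab}(Z(u), A)$ contains a nontrivial $\varphi$, and applying Lemma \ref{char_condition} with $H = Z(u)$ (a subgroup of the finite group $G$) shows that this $\varphi$ automatically satisfies $ord(g)\varphi(g) = 0$ for every $g \in Z(u)$.

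The point worth emphasizing, and what keeps the corollary from being circular, is that by Lemma \ref{char_condition} the order condition is no restriction at all: every additive homomorphism out of a torsion group satisfies it automatically. Thus the two sides of the equivalence genuinely coincide, while the relation $ord(g)\varphi(g) = 0$ records the precise arithmetic compatibility between the element orders of $Z(u)$ and the torsion of $A$ that must be satisfiable for a nontrivial $\varphi$ to exist. I do not expect any real obstacle here; the entire content is packaged in Theorem \ref{finite_result} and Lemma \ref{char_condition}, and the only care needed is to assemble them and to observe that the order condition holds for free.
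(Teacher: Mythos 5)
Your proof is correct and is essentially the paper's own argument: the paper states this corollary without proof precisely because it follows immediately from Theorem \ref{finite_result} (so $\Der\neq\Inn$ iff some $\Hom_{Ab}(Z(u),A)$ is nonzero) combined with Lemma \ref{char_condition} (the order condition is automatic for homomorphisms out of a subgroup of a finite group). Your observation that the order condition adds no genuine restriction, and merely records the torsion compatibility needed for a nontrivial $\varphi$ to exist, is exactly the intended reading.
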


\begin{corollary}\label{corol-torsion-free}
If $A$ is a torsion-free ring, $G$ is a finite group, then $\Out(A[G])=0$. As a consequence, if $A=\Z,\Q,\R,\C$, then $\Out(A[G])=0$.

\end{corollary}

Hereinafter we use notation $gcd(a,b)$ for the greatest common divisor.

\begin{lemma}\label{th-outer-trivial-divisor}
Let $A=\Z_m$, $G$ - finite. Then $$\forall g\in G, gcd(ord(g), m)=1\implies \Der(A[G])=\Inn(A[G]).$$

\end{lemma}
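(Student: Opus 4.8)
The plan is to reduce the claim to the vanishing of the outer derivation module and then to annihilate each summand using the order condition. By part (2) of Theorem \ref{finite_result}, for finite $G$ we have $\Out(A[G]) \cong \bigoplus_{[u] \in G^G} \Hom_{Ab}(Z(u), A)$, so $\Der(A[G]) = \Inn(A[G])$ holds precisely when $\Hom_{Ab}(Z(u), A) = 0$ for every conjugacy class $[u] \in G^G$. Thus it suffices to fix an arbitrary $[u]$ and show that every additive homomorphism $\varphi : Z(u) \to \Z_m$ is trivial.

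First I would take any $\varphi \in \Hom_{Ab}(Z(u), A)$ and any $g \in Z(u)$. Since $Z(u)$ is a subgroup of the finite group $G$, Lemma \ref{char_condition} applies and gives $ord(g)\,\varphi(g) = 0$ in $A = \Z_m$. The key arithmetic observation is that $g \in Z(u) \subseteq G$, so the hypothesis $gcd(ord(g), m) = 1$ forces the residue of the integer $ord(g)$ to be a unit in the ring $\Z_m$. Multiplying the relation $ord(g)\,\varphi(g) = 0$ by this inverse yields $\varphi(g) = 0$. As $g$ was arbitrary, $\varphi$ is the zero homomorphism, whence $\Hom_{Ab}(Z(u), A) = 0$.

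Assembling these vanishings over all $[u] \in G^G$ collapses the direct sum in Theorem \ref{finite_result} to zero, so $\Out(A[G]) = 0$ and therefore $\Der(A[G]) = \Inn(A[G])$, as required.

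There is no serious obstacle here: the statement is essentially a corollary of the classification in Theorem \ref{finite_result} combined with the torsion constraint of Lemma \ref{char_condition}, and it parallels Corollary \ref{corol-torsion-free}. The only point deserving care is the passage from "$ord(g)$ is coprime to $m$" to "$ord(g)$ is invertible in $\Z_m$", which relies on reducing the integer $ord(g)$ modulo $m$ and invoking the standard fact that the units of $\Z_m$ are exactly the residues coprime to $m$. One should also note that the hypothesis is quantified over \emph{all} $g \in G$, which is precisely what the argument consumes, since each centralizer $Z(u)$ is a subgroup of $G$ and every $g \in Z(u)$ inherits the coprimality condition.
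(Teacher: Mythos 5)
Your proposal is correct and follows essentially the same route as the paper: both reduce the claim to the vanishing of every $\Hom_{Ab}(Z(u),\Z_m)$ via Theorem \ref{finite_result}, invoke Lemma \ref{char_condition} to get $ord(g)\varphi(g)=0$ in $\Z_m$, and then use coprimality of $ord(g)$ and $m$ to force $\varphi(g)=0$. The only (immaterial) difference is that you phrase the last step as invertibility of $ord(g)$ in $\Z_m$, while the paper phrases it as the divisibility implication $m \mid ord(g)\varphi(g) \implies m \mid \varphi(g)$.
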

\begin{proof}

Let $\varphi\in \Hom_{Ab}(Z(u),Z_m)$ for some $u$. Then for any $g\in Z(u)$ we have $$ord(g)\varphi(g)=0\in Z_m\implies m|ord(g)\varphi(g).$$Since $\forall g\in G, gcd(ord(g), m)=1$, we have $$m|ord(g)\varphi(g)\implies m|\varphi(g)\implies \varphi(g)=0\in Z_m.$$
\end{proof}
\subsection{Additive homomorphisms}
The following fact is widely-known amongst group theorists.
\begin{fact}
Let $\varphi:G\to A$ be an additive homomorphism from a non-abelian group $G$ to abelian group $A$. Let us denote $\pi:G\to G/ [G,G]$ as the projection mapping. Then there exists $\hat{\varphi}$, such that $\varphi=\hat{\varphi}\circ \pi$.
\begin{center}
\begin{tikzcd}
G \arrow[rr, "\varphi"] \arrow[rd, "\pi"] &                                               & A \\
                                          & {G/[G,G]} \arrow[ru, "\hat{\varphi}", dashed] &  
\end{tikzcd}
\end{center}
\end{fact}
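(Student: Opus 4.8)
The plan is to observe that an additive homomorphism into an abelian target must annihilate all commutators, and then to invoke the universal property of the quotient group $G/[G,G]$. Throughout, recall that $\varphi:G\to A$ being an additive homomorphism means $\varphi(gh)=\varphi(g)+\varphi(h)$ for all $g,h\in G$, where we regard $A$ as the additive abelian group $(A,+)$; in particular $\varphi(e)=0$ and $\varphi(g^{-1})=-\varphi(g)$.

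First I would compute $\varphi$ on an arbitrary commutator. For $g,h\in G$,
$$\varphi([g,h])=\varphi(ghg^{-1}h^{-1})=\varphi(g)+\varphi(h)+\varphi(g^{-1})+\varphi(h^{-1})=\varphi(g)+\varphi(h)-\varphi(g)-\varphi(h)=0,$$
where the middle equality uses additivity and the last uses commutativity of $(A,+)$. Hence every commutator lies in $\ker\varphi$. Since $\ker\varphi$ is a subgroup of $G$ and the commutator subgroup $[G,G]$ is by definition generated by commutators, it follows that $[G,G]\subseteq\ker\varphi$.

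Next I would use this inclusion to factor $\varphi$ through the projection $\pi:G\to G/[G,G]$. Define $\hat{\varphi}:G/[G,G]\to A$ by $\hat{\varphi}(g[G,G])=\varphi(g)$. This is well defined: if $g[G,G]=g'[G,G]$, then $g^{-1}g'\in[G,G]\subseteq\ker\varphi$, so $\varphi(g')=\varphi(g)+\varphi(g^{-1}g')=\varphi(g)$. Additivity of $\hat{\varphi}$ is inherited directly from that of $\varphi$, and by construction $\hat{\varphi}\circ\pi=\varphi$, which is exactly the claim. If uniqueness is desired, it follows at once because $\pi$ is surjective, so $\hat{\varphi}$ is forced on every coset.

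This argument carries no genuine obstacle; it is simply the universal property of abelianization specialized to additive maps into $A$. The only point deserving a moment's care is the passage from ``commutators map to zero'' to ``the whole subgroup $[G,G]$ maps to zero,'' which is immediate once one recalls that $\ker\varphi$ is a subgroup and that $[G,G]$ is generated by commutators.
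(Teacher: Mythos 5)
Your proof is correct and complete: commutators are annihilated because the target is abelian, hence $[G,G]\subseteq\ker\varphi$, and $\varphi$ then factors through $\pi$ by the universal property of the quotient, with well-definedness of $\hat{\varphi}$ checked exactly as needed. The paper itself gives no proof of this statement --- it is explicitly labeled as widely-known and used as a black box --- and your argument is precisely the standard one being invoked, so there is nothing to compare and no gap to report.
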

If we use the primary decomposition of finite abelian groups, then we can decompose the additive group of ring $A$ 
$$A\cong\Z_{p_1^{i_1}}\oplus ... \oplus\Z_{p_n^{i_n}},$$
and$$G/[G,G]\cong\Z_{q_1^{j_1}}\oplus ... \oplus\Z_{q_m^{j_m}}.$$
\begin{lemma}
There is a nontrivial homomorphism $\varphi: \Z_{p^{i}}\to \Z_{q^{j}}$ if and only if $p=q$.
\end{lemma}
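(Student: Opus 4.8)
The plan is to prove the statement
$$\exists\,\varphi:\Z_{p^i}\to\Z_{q^j}\text{ nontrivial}\iff p=q$$
by analyzing the order constraints that any additive homomorphism must satisfy, exactly in the spirit of Lemma \ref{char_condition}. First I would fix a generator $1\in\Z_{p^i}$ (the cyclic generator of order $p^i$) and observe that any additive homomorphism $\varphi$ is completely determined by the single value $a:=\varphi(1)\in\Z_{q^j}$, since $\varphi(k)=ka$ for all $k$. So the whole question reduces to: for which $a\in\Z_{q^j}$ does the assignment $1\mapsto a$ extend to a well-defined homomorphism, and when can such $a$ be nonzero.

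The key constraint is well-definedness: because $p^i\cdot 1=0$ in $\Z_{p^i}$, we need $p^i a=0$ in $\Z_{q^j}$, i.e. $q^j\mid p^i a$. Conversely, any $a$ satisfying $q^j\mid p^i a$ yields a well-defined homomorphism. Thus a nontrivial $\varphi$ exists if and only if there is some $a\in\{1,\dots,q^j-1\}$ with $q^j\mid p^i a$. I would then split into the two directions. For the direction $p=q\implies$ nontrivial $\varphi$ exists: assuming $p=q$, I would exhibit an explicit nonzero $a$, for instance $a=q^{j-1}$ if $i\geq 1$, or more transparently map the generator to an element of order $p^{\min(i,j)}>1$; one checks $p^i\cdot q^{j-1}=p^{i+j-1}\equiv 0\pmod{q^j}$ since $i+j-1\geq j$, so this is well-defined and nonzero.

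For the contrapositive direction $p\neq q\implies$ only the trivial homomorphism exists: since $p\neq q$ are distinct primes, $\gcd(p^i,q^j)=1$, so $p^i$ is invertible modulo $q^j$. Then the condition $q^j\mid p^i a$ forces $q^j\mid a$ (multiply by the inverse of $p^i$), i.e. $a=0$ in $\Z_{q^j}$, hence $\varphi$ is trivial. This coprimality argument is really the same mechanism as Lemma \ref{th-outer-trivial-divisor}. I expect no serious obstacle here: the only point requiring mild care is bookkeeping the well-definedness condition $q^j\mid p^i a$ correctly and handling the edge cases (e.g. ensuring the exhibited element is genuinely nonzero), but the core is the elementary fact that multiplication by $p^i$ is a bijection of $\Z_{q^j}$ precisely when $\gcd(p,q)=1$.
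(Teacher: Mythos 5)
Your proof is correct, and its overall shape matches the paper's: the forward direction is literally the same (send the generator to $q^{j-1}$). Where you differ is in the mechanism for the converse. You exploit cyclicity of the source: every homomorphism is determined by $a=\varphi(1)$, well-definedness is the divisibility constraint $q^j\mid p^i a$, and invertibility of $p^i$ modulo $q^j$ (coprimality of distinct prime powers) forces $a=0$. The paper instead argues pointwise via element orders: for $g$ of order $p^k$, the relation $p^k\varphi(g)=0$ shows $\mathrm{ord}(\varphi(g))$ divides $p^k$, while by Lagrange's theorem in $\Z_{q^j}$ that order is a power of $q$, and for $p\neq q$ the only common possibility is $1$, so $\varphi$ vanishes identically. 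Both arguments rest on the same coprimality fact, but they buy slightly different things. Your generator-plus-divisibility bookkeeping actually classifies \emph{all} homomorphisms, giving $\Hom_{Ab}(\Z_{p^i},\Z_{q^j})\cong\Z_{\gcd(p^i,q^j)}$, which is sharper than the yes/no statement and is the structure implicitly used in Corollary \ref{corol-Hom-ab}. The paper's order-theoretic version never uses that the source is cyclic, so it applies verbatim with any finite abelian $p$-group in place of $\Z_{p^i}$, and it is the same mechanism as Lemma \ref{char_condition}, which is presumably why the authors phrased it that way.
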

\begin{proof}
1. If $p=q$, then we can always take $\varphi:1\mapsto q^{j-1}$.

2. Let  $\varphi: \Z_{p^{i}}\to \Z_{q^{j}}$  be a homomorphism. Notice that elements of the primary group $\Z_{p^i}$  can only be of order $1, p,p^2,...,p^i$ due to the Lagrange theorem (order of element should divide order of group). Similarly we can say that elements of $\Z_{q^j}$ can only be of order $1,q,...,q^j$. Then for any element $g\in \Z_{p^i}$ of order $p^k$ we have  $0=\varphi(0)=\varphi(p^k g)=p^k\varphi( g)$.  On the other hand we know that $ord(\varphi(g))\in 1,q,...,q^j$. It can only be possible if $p=q$.
\end{proof}
\begin{corollary}
\label{corol-Hom-ab}
Let the primary decompositions of  $A$ and $G/[G,G]$ be 
$$A\cong\Z_{p_1^{i_1}}\oplus ... \oplus\Z_{p_n^{i_n}},G/[G,G]\cong\Z_{q_1^{j_1}}\oplus ... \oplus\Z_{q_m^{j_m}}.$$Then there is a nontrivial homomorphism in $\varphi\in \Hom_{Ab}(G/[G,G],A)$ if and only if there is some $p_i,q_j$ such that $p_i=q_j$.
\end{corollary}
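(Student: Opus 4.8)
The plan is to reduce the question about $\Hom_{Ab}(G/[G,G],A)$ to a statement about each pair of primary cyclic summands, and then invoke the preceding lemma componentwise. First I would use the fact that both $A$ (as an additive abelian group) and $G/[G,G]$ are finite abelian groups, so they split into primary cyclic pieces exactly as written in the statement. The key algebraic input is that $\Hom_{Ab}$ out of a direct sum and into a direct sum decomposes as a biproduct: for finite abelian groups one has the standard isomorphism
\begin{equation*}
\Hom_{Ab}\Big(\bigoplus_{j=1}^m \Z_{q_j^{\,\prime}},\ \bigoplus_{i=1}^n \Z_{p_i^{\,\prime}}\Big)\cong \bigoplus_{j=1}^m\bigoplus_{i=1}^n \Hom_{Ab}\big(\Z_{q_j^{\,\prime}},\Z_{p_i^{\,\prime}}\big),
\end{equation*}
where I abbreviate the prime powers for readability. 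This is because a homomorphism out of a direct sum is determined by its restriction to each summand, and a homomorphism into a direct sum is the tuple of its coordinate projections.

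Next I would observe that a nontrivial homomorphism exists on the left-hand side if and only if at least one of the summands on the right-hand side is nontrivial, since the total $\Hom$ group is the direct sum of the individual $\Hom$ groups and a direct sum is nonzero precisely when one of its factors is nonzero. This step is purely formal once the decomposition above is in place.

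Then I would apply the preceding lemma, which states that $\Hom_{Ab}(\Z_{p^i},\Z_{q^j})$ contains a nontrivial homomorphism if and only if $p=q$. Feeding this into the biproduct decomposition, the right-hand side has a nontrivial summand if and only if there is some pair of indices $(i,j)$ with $q_j=p_i$, i.e. precisely when the prime sets $\{p_1,\dots,p_n\}$ and $\{q_1,\dots,q_m\}$ share a common prime. Chaining the two equivalences together yields the corollary.

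I expect the only genuine subtlety, rather than an obstacle, to be bookkeeping about the direction of the $\Hom$ functor: the lemma is phrased for $\Hom_{Ab}(\Z_{p^i},\Z_{q^j})$, so I must be careful that in $\Hom_{Ab}(G/[G,G],A)$ the source is $G/[G,G]$ (with primes $q_j$) and the target is $A$ (with primes $p_i$), matching the lemma with the roles of $p$ and $q$ swapped; but since the lemma's criterion $p=q$ is symmetric, this causes no difficulty. Everything else is the routine additivity of $\Hom$ over finite direct sums, so no step should present a real difficulty.
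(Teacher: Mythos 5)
Your proof is correct and follows exactly the route the paper intends: the paper states this corollary with no written proof, treating it as immediate from the preceding lemma via the standard additivity of $\Hom_{Ab}$ over finite direct sums, which is precisely the biproduct decomposition you spell out. Your remark on the direction of the $\Hom$ functor (source vs.\ target primes) is a reasonable piece of care, and as you note it is harmless because the lemma's criterion $p=q$ is symmetric.
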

Now we can obtain the criterion for the triviality of the outer derivations algebra.

Let the ring $A$ be such that
$$A\cong\Z_{p_1^{i_1}}\oplus ... \oplus\Z_{p_n^{i_n}}$$
is the primary decomposition for additive group of the ring $A$. Also let 
$$G/[G,G]\cong\Z_{q_1^{j_1}}\oplus ... \oplus\Z_{q_m^{j_m}}$$be the primary decomposition of $G/[G,G]$.

\begin{theorem}
\label{th-deriv-problem}
For finite group $G$ and finite ring $A$ all derivations $\Der(A[G])$ are inner ($\Der(A[G])=\Inn(A[G])$ )
if and only if  $\{p_1,...,p_n\}\cap\{q_1,...,q_m\}=\emptyset$. 
\end{theorem}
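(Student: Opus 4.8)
The plan is to convert the equality $\Der(A[G])=\Inn(A[G])$ into a statement about finitely many finite abelian groups and then read it off the primary decompositions. By part~2 of Theorem~\ref{finite_result} one has $\Out(A[G])\cong\bigoplus_{[u]\in G^G}\Hom_{Ab}(Z(u),A)$, so $\Der(A[G])=\Inn(A[G])$ is equivalent to $\Hom_{Ab}(Z(u),A)=0$ for \emph{every} conjugacy class $[u]\in G^G$. Since the additive group of $A$ is abelian, each additive homomorphism $Z(u)\to A$ kills commutators and therefore factors through the abelianization $Z(u)/[Z(u),Z(u)]$; thus I must decide, class by class, when $\Hom_{Ab}\big(Z(u)/[Z(u),Z(u)],A\big)=0$.

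For the implication $\{p_1,\dots,p_n\}\cap\{q_1,\dots,q_m\}\neq\emptyset\Rightarrow\Der(A[G])\neq\Inn(A[G])$ I would use only the identity class. Because $Z(e)=G$, the corresponding summand of $\Out(A[G])$ is exactly $\Hom_{Ab}(G,A)\cong\Hom_{Ab}(G/[G,G],A)$, and Corollary~\ref{corol-Hom-ab} says this module is nonzero precisely when some $p_i$ equals some $q_j$. A shared prime therefore produces a nonzero summand, so $\Out(A[G])\neq 0$ and the derivations cannot all be inner. This half uses nothing beyond the $u=e$ term.

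For the converse $\{p_1,\dots,p_n\}\cap\{q_1,\dots,q_m\}=\emptyset\Rightarrow\Der(A[G])=\Inn(A[G])$ I would rerun the primary-decomposition argument underlying Corollary~\ref{corol-Hom-ab} with $G/[G,G]$ replaced by each $Z(u)/[Z(u),Z(u)]$: a nonzero $\varphi\in\Hom_{Ab}(Z(u),A)$ exists iff some prime $p_i$ dividing $|A|$ also occurs in $Z(u)/[Z(u),Z(u)]$. Lemma~\ref{char_condition} packages the same information through orders, since any $\varphi$ obeys $ord(g)\varphi(g)=0$; combined with Lemma~\ref{th-outer-trivial-divisor} this already yields the clean sufficient condition that $\Out(A[G])=0$ whenever no prime dividing $|A|$ divides the order of any element of $G$, i.e.\ whenever $\{p_i\}$ is disjoint from the primes of $|G|$.

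The hard part will be bridging the gap between that order-based condition and the stated criterion: the primes occurring in the element orders of $G$ are exactly the primes dividing $|G|$, whereas the theorem is phrased in terms of $G/[G,G]$. The crux is therefore to show that disjointness of $\{p_i\}$ from the primes of $G/[G,G]$ already forces $\Hom_{Ab}(Z(u),A)=0$ for \emph{all} $u$, and not merely for $u=e$. I would try to establish the key claim that every prime appearing in some abelianized centralizer $Z(u)/[Z(u),Z(u)]$ already appears in $G/[G,G]$ --- reducing, via the primary decomposition $A\cong\bigoplus_k\Z_{p_k^{i_k}}$, to the vanishing of each $\Hom_{Ab}(Z(u),\Z_{p^{i}})$ for $p\notin\{q_j\}$ --- and then assemble these componentwise vanishings over all classes and all primary components of $A$. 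Controlling the abelianizations of all centralizers simultaneously, rather than just that of $G$ itself, is exactly the delicate point on which the entire converse rests.
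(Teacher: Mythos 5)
Your reduction via Theorem~\ref{finite_result} and your treatment of the ``shared prime $\Rightarrow$ outer derivations exist'' half are correct, and essentially coincide with the paper's part 2 (the paper additionally realizes the nonzero class as a central derivation $d_\varphi^e$, which is cosmetic). The genuine gap is in the converse, exactly at the point you flagged --- and it cannot be closed, because the key claim you propose is false. Counterexample: $G=S_3$, $u=(123)$. Then $Z(u)=\langle(123)\rangle\cong\Z_3$ is abelian, so its abelianized centralizer is $\Z_3$ itself, and the prime $3$ occurs there but not in $G/[G,G]\cong\Z_2$. Concretely, take $A=\Z_3$: the prime sets are $\{3\}$ and $\{2\}$, disjoint, yet Theorem~\ref{finite_result} gives $\Out(\Z_3[S_3])\cong\Hom_{Ab}(S_3,\Z_3)\oplus\Hom_{Ab}(\Z_2,\Z_3)\oplus\Hom_{Ab}(\Z_3,\Z_3)\cong\Z_3\neq 0$. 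So disjointness of the primes of $A$ from those of $G/[G,G]$ does \emph{not} force $\Hom_{Ab}(Z(u),A)=0$ for all $u$, and the ``if'' direction of Theorem~\ref{th-deriv-problem} as stated is actually inconsistent with Theorem~\ref{finite_result}.

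You should also know that the paper's own proof commits precisely the oversight you were trying to avoid: its part 1 deduces only $\Hom_{Ab}(G,A)=0$, i.e.\ the vanishing of the single summand attached to $[e]$, and never addresses the summands $\Hom_{Ab}(Z(u),A)$ for $u\neq e$. Your instinct that controlling the abelianizations of \emph{all} centralizers simultaneously is the crux was exactly right; it is a hole in the theorem itself, not merely in your write-up. What does survive is the statement you extracted from Lemmas~\ref{char_condition} and~\ref{th-outer-trivial-divisor}, applied componentwise to the primary decomposition of $A$: if the primes of $A$ are disjoint from the primes of $|G|$ (equivalently, from the primes of element orders, by Cauchy and Lagrange), then every $\mathrm{ord}(g)$ is invertible against every primary component of $A$, so all $\Hom_{Ab}(Z(u),A)$ vanish and $\Der(A[G])=\Inn(A[G])$ follows from Theorem~\ref{finite_result}. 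A correct criterion must therefore be phrased through $|G|$ (or through all centralizer abelianizations), not through $G/[G,G]$.
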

\begin{proof}

1)Let $\{p_1,...,p_n\}\cap\{q_1,...,q_m\}=\emptyset$. Due to corollary \eqref{corol-Hom-ab} we know that this implies $\Hom(G/[G,G], A)=0$. We know that for any homomorphism $\varphi:G\to A$ there is $\hat{\varphi}\in \Hom(G/[G,G], A)$ such that $\varphi=\hat{\varphi}\circ \pi$.  Thus $\varphi=0\circ \pi$ must be the zero homomorphism.

2)
Let $$\{p_1,...,p_n\}\cap\{q_1,...,q_m\}=r.$$ 
Without any loss of generality we can say that $r=p_k=q_k$. If it's not we can just reorder primary components. Then we have a nontrivial homomorphism $$\varphi_k=\Z_{r^{i_k}}\to \Z_{r^{j_k}},\varphi_k:1\mapsto r^{j_k-1}$$
between primary subgroups  $\Z_{r^{i_k}}\subset G/[G,G]$ and $\Z_{r^{j_k}}\subset A$ correspondingly. 

Firstly, we can extend $\varphi_k$ onto the whole $G/[G,G]$ by zeros on all primary components except for $\Z_{r^{i_k}}$. Thus we have obtained $\hat{\varphi}: G/[G,G]\to A$.

Secondly, we can just compose $\hat{\varphi}$ with the projection map $\pi$:
$$\varphi=\hat{\varphi}\circ \pi.$$Now we can clearly see that there is a nontrivial central derivation $d_\varphi^e$. Since central derivations are isomorphic to a certain subalgebra of $\Out(A[G])$, then in this case $\Out(A[G])\neq 0$.
\end{proof}


\section{Examples and applications}
\label{ex-app}

\subsection{Explicit isomorphism map for outer derivations}

In the previous paragraphs we built two isomorphisms of $A$-modules:  "outer derivations" $\leftrightarrow$ "characters on loops" and "characters on loops" $\leftrightarrow$ "additive homomorphisms on centralizers". Now we will provide the explicit description of the composition of these isomorphisms. Problem is that such isomorphism is not canonical, because it depends on choice of elements in conjugacy classes. In this paragraph we build one such $A$-module isomorphism explicitly.

Fix conjugacy classes of $G$ as $$G^G=\{[u_1],[u_2],\ldots, [u_n]\}.$$
Choose representatives of conjugacy classes: $u_1\in [u_1], \ldots, u_n\in [u_n].$

Denote $$F_{u_1, \ldots, u_n}:\bigoplus_{i=1}^n   \Hom_{Ab}(Z(u_i), A)\xrightarrow{\sim} \Out(A[G])
$$
as the $A$-module isomorphism we built above. Let $$\varphi\in \bigoplus_{[u]\in G^G}   \Hom_{Ab}(Z(u), A).$$

\begin{theorem}\label{explicit_map}
Let $\varphi\in \Hom_{Ab}(Z(u_i), A)$ for some $i$. The $A$-module isomorphism mapping loop groups to outer derivations is described by the following formula:
$$F_{u_1, \ldots, u_n}(\varphi)=D(\varphi)+\Inn$$
\begin{multline*}
(D(\varphi))_v^u=
\begin{cases}
\varphi(gvg^{-1})&\text{ if }v^{-1}u=uv^{-1}=g^{-1}u_i g \text{ for some }g\in G\\
0,& \text{else }
\end{cases}.
\end{multline*}
The isomorphism $F_{u_1, \ldots, u_n}(\varphi))$ is then extended on $\bigoplus_{[u]\in G^G}   \Hom_{Ab}(Z(u), A)$ by linearity.
\end{theorem}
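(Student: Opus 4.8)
The plan is to recognize $F_{u_1,\ldots,u_n}$ not as a new object but as the explicit unwinding of the composite isomorphism already assembled in Theorem~\ref{finite_result}. That composite sends $\Hom_{Ab}(Z(u_i),A)$ to $\Out(A[G])$ in three steps: first the identification of Lemma~\ref{loop_group}, $\Hom_{Ab}(Z(u_i),A)\cong X_1(\Hom(u_i,u_i))$; second the splitting section $f$ of Lemma~\ref{main}, which extends a character on the loop group $\Hom(u_i,u_i)$ to a character $\chi\in X_1(\Gamma_{[u_i]})$ supported on the component; and third the canonical identification $X_1(\Gamma)\cong\Der$ of \eqref{d^g_h} followed by passage to the quotient $\Out=\Der/\Inn$. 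I would fix a single homogeneous $\varphi\in\Hom_{Ab}(Z(u_i),A)$, push it through these three steps, and read off the coefficients $(D(\varphi))_v^u$ of the resulting derivation; linearity over $\bigoplus_i$ then finishes.

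For the first two steps, under Lemma~\ref{loop_group} the loops at $u_i$ are exactly the morphisms $(vu_i,v)$ with $v\in Z(u_i)$, and the composition law matches multiplication in $Z(u_i)$, so $\varphi$ corresponds to the loop character $\chi_{u_i}$ with $\chi_{u_i}((vu_i,v))=\varphi(v)$. Applying the section \eqref{section}, the extended character $\chi=f(\chi_{u_i})$ vanishes off the loops of $\Gamma_{[u_i]}$ and, on a loop at $u'=g^{-1}u_ig$, is given by $\chi(\psi)=\chi_{u_i}(\theta^{-1}\circ\psi\circ\theta)$ for any $\theta\in\Hom(u',u_i)$.

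The computational heart is step three. By \eqref{d^g_h}, $(D(\varphi))_v^u=\chi((u,v))$, which vanishes unless $(u,v)$ is a loop, i.e. $w:=v^{-1}u=uv^{-1}$, and unless $w\in[u_i]$, say $w=g^{-1}u_ig$; in that case $v\in Z(w)$ and $u=vw$. I would take the explicit conjugating morphism $\theta=(gw,g)\in\Hom(w,u_i)$, compute its inverse $\theta^{-1}=(wg^{-1},g^{-1})$, and carry out the two groupoid compositions (remembering the paper's diagrammatic, left-to-right convention $\varphi\circ\psi=(u_2v_1,v_2v_1)$) to obtain $\theta^{-1}\circ(u,v)\circ\theta=((gvg^{-1})u_i,\,gvg^{-1})$, a loop at $u_i$. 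Hence $\chi((u,v))=\chi_{u_i}((gvg^{-1}u_i,gvg^{-1}))=\varphi(gvg^{-1})$, matching the asserted formula exactly.

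Two points need care, and I expect the bookkeeping of the composition law to be the main obstacle: getting the direction of conjugation ($gvg^{-1}$ rather than $g^{-1}vg$) right depends delicately on the nonstandard left-to-right convention, so I would double-check each composition against the identity and inverse morphisms. The second point is well-definedness: if $g_1,g_2$ both conjugate $u_i$ to $w$, then $c:=g_2g_1^{-1}\in Z(u_i)$ and $g_2vg_2^{-1}=c(g_1vg_1^{-1})c^{-1}$; because $A$ is commutative and $\varphi$ is additive, $\varphi$ is a class function on $Z(u_i)$, indeed $\varphi(cxc^{-1})=\varphi(c)+\varphi(x)-\varphi(c)=\varphi(x)$, so $\varphi(gvg^{-1})$ is independent of the choice of $g$. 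This is precisely the conjugation-invariance already invoked in Lemma~\ref{main} to guarantee that the section \eqref{section} is well defined, so no new input is required. Since $D(\varphi)+\Inn$ is thus literally the image of $\varphi$ under the isomorphism $F_{u_1,\ldots,u_n}$ of Theorem~\ref{finite_result}, extending by linearity over the direct sum completes the proof.
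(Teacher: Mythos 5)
Your proposal is correct and follows essentially the same route as the paper's own proof: it unwinds the composite isomorphism (Lemma~\ref{loop_group}, the section \eqref{section} of Lemma~\ref{main}, and the character--derivation correspondence \eqref{d^g_h}), conjugates the loop $(u,v)$ back to the basepoint $u_i$ via the very same morphisms $\theta=(u_ig,g)$, $\theta^{-1}=(g^{-1}u_i,g^{-1})$, and reads off $\varphi(gvg^{-1})$. Your explicit check that the value is independent of the choice of $g$ (via conjugation-invariance of additive maps into the abelian group $A$) is in fact a cleaner justification than the paper's one-line appeal at that step.
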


\begin{proof}
If $\varphi\in \bigoplus_{[u]\in G^G}   \Hom_{Ab}(Z(u), A)$, then $\varphi$ is a direct sum of homomorphisms: 
$$\varphi=( \varphi_1,\ldots ,\varphi_n),\varphi_i\in \Hom_{Ab}(Z(u_i), A).$$

We have an outer derivation $F_{u_1, \ldots, u_n}(\varphi)$ as an outer derivation, and we wish to calculate its coefficients. Then 
\begin{multline}\label{coeffouter}
(F_{u_1, \ldots, u_n}(\varphi))_v^u=(\sum_i^n \chi_i)((u,v))=\\
\text{ assume that }(u,v)\in \Gamma_{[u_i]}
\\= \chi_i((u,v))=
\begin{cases}
\chi_{\varphi_i}(\psi),& \psi\text{ is a loop, conjugated to }(u,v)\\
0,& \text{else }
\end{cases}
\end{multline}

If $\psi$ is a loop conjugated to $(u,v)$, then we have $\chi_{\varphi_i}(\psi)=\chi_{\varphi_i}((u,v))$. Since $(u,v)$ is a loop in $\Gamma_{[u_i]}$, we know that exists $g$ such that $v^{-1}u=uv^{-1}=g^{-1}u_i g$. Let us find such a $\psi$:

\begin{displaymath}
\begin{tikzcd} u_i 
\arrow["\psi"', loop, distance=2em, in=125, out=55]
\arrow[rr, "{(g^{-1}u_i,g^{-1})}"{anchor=south},
 bend left] &  & 
g^{-1}u_ig 
\arrow[ll, "{(u_i g,g)}"{anchor=north},
 bend left] 
\arrow["{(u,v)}"', loop, distance=2em, in=125, out=55] 
\end{tikzcd}
\end{displaymath}
Since the character $\chi_{\varphi_i}$ is potential, $\chi_{\varphi_i}(\psi)$ does not depend on the choice of $g$. Hence  we have:
\begin{multline*}
\chi_{\varphi_i}((u,v))=\chi_{\varphi_i}(\psi)=\chi_{\varphi_i}((u_ig,g)\circ(u,v)\circ(g^{-1}u_i, g^{-1}))=\\
=\chi_{\varphi_i}((u_ig,g)\circ(vg^{-1}u_i, vg^{-1}))
=\chi_{\varphi_i}((gvg^{-1}u_i, gvg^{-1}))=\\=\varphi_i(gvg^{-1}).
\end{multline*}

Combining this formula with the formula \eqref{coeffouter}, we obtain the statement of the theorem.
\end{proof}

As a result, we can describe outer derivations as an $A$-module presentation. (Details on module presentations can be found at \cite{Herrmanns}). Denote the generating set of $\Out$ as $S$, and relations as $T$. Its relations are inherited from $\bigoplus_{[u]\in G^G}   \Hom_{Ab}(Z(u), A)$  (just usual relations for addition and scalar multiplication on $\Hom$-sets with $A$-module structure). In terms of the theorem above,
\begin{multline}
\Out=<S|T>, \text{ where} \\
S=\bigsqcup_{[u]\in G^G}S_u\\
\text{Here }S_u=\{F_{u_1, \ldots, u_n}(\varphi)| \varphi\in \Hom_{Ab}(Z(u), A)\}\\
T=\{F_{u_1, \ldots, u_n}(\varphi)+F_{u_1, \ldots, u_n}(\psi)=F_{u_1, \ldots, u_n}(\varphi+\psi),  \\F_{u_1, \ldots, u_n}(a \varphi)=a F_{u_1, \ldots, u_n}(\varphi)\}\\
\text{ for any }a\in A \text{ and } \varphi,\psi \in\bigoplus_{[u]\in G^G}   \Hom_{Ab}(Z(u), A)
\end{multline}
Obviously, sets $\{S_u| [u]\in G^G\}$ are disjoint.

 \subsection{Example: derivations of $\Z_4[S_3]$}
 \label{ex-z4-s3}
Let us consider the following case: $A=\Z_4, G=S_3$. 


In order to describe outer derivations, we need to calculate  
$\bigoplus_{[u]\in G^G}   \Hom_{Ab}(Z(u), A)$ and then map it to $\Out(\Z_4(S_3))$  by the means of theorem \eqref{explicit_map}. Thus we can calculate explicit form of outer derivations' elements. So we begin with $\bigoplus_{[u]\in G^G}   \Hom_{Ab}(Z(u), A)$.

There are three conjugacy classes in $S_3$: $[e], [(12)],[(123)]$. 

For any permutation $\sigma$ its centralizer is defined as $Z(\sigma)=\{g\in G| \sigma=g^{-1}\sigma g\}$. Since centralizer is a subgroup, due to Lagrange's theorem on subgroup indices it must be of size 1,2,3, or 6. On the other hand, conjugation leaves the cycle type intact. Therefore,
\begin{center}
\begin{tabular}{ |c|c|c|c| } 
 \hline
\text{conjugacy class $[u]$}&[e]& [(12)]&[(123)]\\
\text{centralizer $Z(u)$ up to isomorphism}&$S_3$&$\Z_2$ &$\Z_3$\\
 \hline
\end{tabular}
\end{center}
Now we have to describe all the additive homomorphisms mapping these centralizers to $\Z_4$.
\begin{proposition}
Additive hom-sets have the following form:
\begin{enumerate}
\item $\Hom_{Ab}(S_3, \Z_4)=\{0, \varphi_1:(ij)\mapsto 2\text{ for }\forall (ij)\},$
\item $\Hom_{Ab}(\Z_2, \Z_4)=\{0, \varphi_2:1\mapsto 2\},$
\item $\Hom_{Ab}(\Z_3, \Z_4)=\{0\}.$
\end{enumerate}

\end{proposition}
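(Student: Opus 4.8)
The plan is to treat the three hom-sets separately, exploiting that $\Z_2$ and $\Z_3$ are cyclic and that $S_3$ reduces to the cyclic case via its abelianization. Throughout I would use Lemma \ref{char_condition}, which tells us that any $\varphi \in \Hom_{Ab}(H, \Z_4)$ satisfies $ord(g)\varphi(g) = 0$ in $\Z_4$ for every $g \in H$; on a cyclic group this order constraint is not merely necessary but, together with a single consistency check, sufficient to enumerate all homomorphisms.

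For part (3), a homomorphism out of $\Z_3$ is determined by the image of a generator $g$ of order $3$. The order condition forces $3\varphi(g) = 0$ in $\Z_4$; since $3$ is a unit modulo $4$ (indeed $3\cdot 3 = 1$), this gives $\varphi(g) = 0$, so the map is trivial and $\Hom_{Ab}(\Z_3, \Z_4) = \{0\}$. Equivalently this is the $p \neq q$ case of the preceding lemma on homomorphisms $\Z_{p^{i}} \to \Z_{q^{j}}$. For part (2), a homomorphism out of $\Z_2$ is likewise determined by the image of its generator, an element of order $2$, and the order condition gives $2\varphi(1) = 0$ in $\Z_4$, so $\varphi(1) \in \{0, 2\}$. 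Conversely both assignments extend to genuine homomorphisms: $1 \mapsto 0$ is the zero map, and $1 \mapsto 2$ is consistent since $2 \cdot 2 = 0$ in $\Z_4$. This yields exactly the two maps $0$ and $\varphi_2$.

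For part (1), I would invoke the standard fact recalled just above, that any additive homomorphism $\varphi : S_3 \to \Z_4$ factors through the projection $\pi : S_3 \to S_3/[S_3, S_3]$. Since $[S_3, S_3] = A_3$ and $S_3/A_3 \cong \Z_2$, this identifies $\Hom_{Ab}(S_3, \Z_4)$ with $\Hom_{Ab}(\Z_2, \Z_4)$, already computed in part (2). Translating back, the nonzero map $\hat{\varphi} : \Z_2 \to \Z_4$ composed with $\pi$ sends every transposition (which projects to the generator of $\Z_2$) to $2$ and every element of $A_3$ to $0$; this is precisely $\varphi_1$.

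The computations here are routine, so I do not expect a genuine obstacle. The only points requiring care are verifying that the candidate assignments are genuine homomorphisms rather than arbitrary functions — which is exactly where the consistency check $2\cdot 2 = 0$ in part (2) is needed — and correctly identifying $S_3/[S_3,S_3] \cong \Z_2$ so that part (1) collapses cleanly onto part (2).
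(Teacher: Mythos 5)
Your proof is correct, but for part (1) it takes a genuinely different route from the paper. The paper handles $\Hom_{Ab}(S_3,\Z_4)$ by a direct element-wise computation: the order constraint $ord(g)\varphi(g)=0$ forces the $3$-cycles to $0$ and the transpositions into $\{0,2\}$, and then an explicit manipulation of the relations among $(12),(13),(23)$ (assuming WLOG $\varphi(12)=2$, deriving $\varphi(23)=\varphi(13)$, and ruling out the value $0$ because it would force $\varphi(123)=2$, contradicting $ord(123)=3$) pins down $\varphi_1$ as the unique nontrivial map. You instead invoke the factorization of any homomorphism into an abelian group through the abelianization --- a fact the paper itself records in its subsection on additive homomorphisms, with $[S_3,S_3]=A_3$ and $S_3/A_3\cong\Z_2$ --- which collapses part (1) onto part (2). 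Your route is shorter, avoids the case analysis and the slightly informal WLOG step, and generalizes immediately (it is exactly the mechanism behind the paper's Theorem \ref{th-deriv-problem}); the paper's computation, on the other hand, is self-contained and makes visible how the order constraints and the relations of $S_3$ interact, without needing to know $[S_3,S_3]$. Parts (2) and (3) you treat essentially as the paper does (the paper dismisses them as obvious from the order argument), with the minor merit that you explicitly check the candidate assignment $1\mapsto 2$ really is a homomorphism rather than just compatible with the necessary order condition.
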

\begin{proof}
The second and the third parts are obvious, because image of any element in the power of its order should be equal to zero.

Now let us prove the first part of our statement. 
Consider an arbitrary additive homomorphism $\varphi\in\Hom_{Ab}(S_3, \Z_4)$. 

For any element $g$ of $S_3$ we need $ord(g)\varphi(g)=0\in \Z_4$ to hold true. So, only $(12),(13), (23)$ can be mapped by $\varphi$ to nonzero. 

Without any loss of generality we can assume that $\varphi(12)=2$. Then 
$$2\varphi(23)+\varphi(13)=2=2\varphi(13)+\varphi(23)\implies \varphi(23)-\varphi(13)=0$$
Now if $\varphi(23)=\varphi(13)=0$, then $\varphi(123)=\varphi(23)+\varphi(12)=2$. This condradicts to the fact that $ord(123)=3$:
$$\varphi((123)^3)=\varphi(e)=0\neq 2$$

Hence  $\varphi(23)=\varphi(13)=2$, and the only nontrivial homomorphism in $\Hom_{Ab}(S_3, \Z_4)$ is the  $\varphi:(ij)\mapsto 2\text{ for }\forall (ij)$. We denote it as $\varphi_1$.

\end{proof}


Now we can use $F_{e, (12), (123)}$  from the theorem \eqref{explicit_map} to map non-trivial homomorphisms contained in $\bigoplus_{[u]\in G^G}   \Hom_{Ab}(Z(u), A)$ to derivations.

1) We begin with the nontrivial homomorphism $\varphi_1$ from the first hom-set $\Hom_{Ab}(Z(e), \Z_4)\cong \Hom_{Ab}(S_3, \Z_4)$.
Denote the corresponding outer derivation as $$d_1+\Inn=F_{e, (12), (123)}(\varphi_1).$$
The theorem \eqref{explicit_map} gives us
\begin{multline*}
(d_1)_v^u=(F_{e, (12), (123)}(\varphi_1))_v^u=\\=
\begin{cases}
\varphi_1(gvg^{-1})&\text{ if }v^{-1}u=uv^{-1} =e\\
0,& \text{else }
\end{cases}.
\end{multline*}
So, nontrivial coefficients occur only when $v^{-1}u=uv^{-1}=e\implies u=v$,  Moreover, we see that
depending on the cycle type of $u$, by theorem \eqref{explicit_map} we have

\begin{equation*}
(d_1)_{u}^{u}=
\begin{cases}
0, &\text{ if } u=e,\\
\varphi_1(u)=2&\text{ if }u=(ij)\\
0,& \text{ if } u=(ijk).\\
\end{cases}
\end{equation*}
To conclude,
\begin{equation}\label{d1}
(d_1)_{v}^{u}=
\begin{cases}
2&\text{ if }u=v \text{ is a transposition,}\\
0,& \text{ else}\\
\end{cases}.
\end{equation}

2) The second nontrivial homomorphism is $\varphi_2\in \Hom_{Ab}(Z((12)), \Z_4)$
Denote the corresponding outer derivation as $$d_2+\Inn=F_{e, (12), (123)}(\varphi_2).$$
The theorem \eqref{explicit_map} gives us
\begin{multline}\label{jopa}
(d_2)_v^u=(F_{e, (12), (123)}(\varphi_2))_v^u=\\
=
\begin{cases}
\varphi_2(gvg^{-1})&\text{ if }v^{-1}u=uv^{-1}=g^{-1}(12) g \text{ for some }g\in G\\
0,& \text{else }
\end{cases}.
\end{multline}

On one hand,  if $v^{-1}u=uv^{-1}$, then there can be 3 cases: 
\begin{enumerate}
\item either $u=(ij),v=e$,
\item or $u=e, v=(ij)$,
\item or $u,v \text{ are cycles of length 3}$.
\end{enumerate}
Since a product of two cycles of length 3 cannot be a transposition, third case leads to a contradiction.
On the other hand, we know that for any derivation $d$, it holds true that $d_e^g=0$ for any $g$. Now we see that the first case leads to a contradiction.

It follows that only one case is possible: $u=e, v=(ij)$. Now since $g^{-1}(12) g$ can only be a transposition, it suffices to consider cases when $g$ equals to $e,(13), (23)$.  Therefore, equation \eqref{jopa} takes form:

\begin{multline}\label{d2}
(d_2)_v^u= \\=
\begin{cases}
\varphi_2(12)=2 &\text{ if }u=e, v=(12), g=e,\\
\varphi_2(gvg^{-1})=\varphi_2((13)(23)(13))=\varphi_2((12))=2&\text{ if }u=e, v=(23), g=(13),\\
\varphi_2(gvg^{-1})=\varphi_2((23)(13)(23))=\varphi_2((12))=2&\text{ if }u=e, v=(13), g=(23),\\
0,& \text{else }
\end{cases}=\\
=\begin{cases}
2, & u=e, v\in\{ (12),(23), (31)\},\\
0, & else
\end{cases}
\end{multline}

Finally, the theorem \eqref{explicit_map} tells us that the $\Z_4$-module of outer derivations admits the description in terms of generators and relations:
\begin{equation*}
\Out=<S|T>,
\end{equation*}
with the generating set equal to $$S=\{d_1+\Inn, d_2+\Inn\}$$ and the relations are 
$$T=\{2(d_1+\Inn)=\Inn=2(d_2+\Inn)\}.$$

\subsubsection*{Central derivations of $\Z_4[S_3]$}

Let $z\in Z(G)$, $\tau:G\to A$ - additive homomorphism.

Linear operator $d_\tau^z$ defined on generators as $d_\tau^z: x\mapsto \tau(x)xz$, is a central derivation (see \cite{Arutyunov_2}). Any linear combination of such operators is also a central derivation.

\begin{proposition}
Central derivations are not isomorphic to outer derivations.
\end{proposition}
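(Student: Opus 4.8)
The plan is to compute the $\Z_4$-module of central derivations of $\Z_4[S_3]$ explicitly and compare it with the module $\Out(\Z_4[S_3])$ already determined above. First I would use that the center of $S_3$ is trivial, $Z(S_3)=\{e\}$, so every central derivation $d_\tau^z$ is forced to have $z=e$ and is therefore of the form $d_\tau^e:x\mapsto \tau(x)x$ for some additive homomorphism $\tau\in\Hom_{Ab}(S_3,\Z_4)$. By the Proposition just proved, $\Hom_{Ab}(S_3,\Z_4)=\{0,\varphi_1\}$, so up to $\Z_4$-scaling the only nonzero central derivation is $d_{\varphi_1}^e$.

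Next I would pin down the module structure of the central derivations. A direct comparison of coefficients shows that $d_{\varphi_1}^e$ coincides with the derivation $d_1$ of \eqref{d1}: one has $(d_{\varphi_1}^e)_v^u=\varphi_1(v)$ when $u=v$ and $0$ otherwise, i.e. it sends each transposition $\sigma$ to $2\sigma$ and annihilates $e$ together with the $3$-cycles. Since $\varphi_1$ takes values in $\{0,2\}\subset\Z_4$, we have $2\,d_{\varphi_1}^e=0$, so the $\Z_4$-module of central derivations is generated by a single $2$-torsion element and hence consists of exactly the two operators $0$ and $d_1$; that is, it is isomorphic to $\Z_2$.

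On the other hand, the generators-and-relations description obtained above shows that $\Out(\Z_4[S_3])$ is presented by the generators $d_1+\Inn$ and $d_2+\Inn$ subject only to the relations $2(d_1+\Inn)=2(d_2+\Inn)=\Inn$, whence $\Out(\Z_4[S_3])\cong\Z_2\oplus\Z_2$ has four elements. A $\Z_4$-module with two elements cannot be isomorphic to one with four, which proves the claim. In fact the natural assignment sending each central derivation to its class modulo $\Inn$ identifies the central derivations with the \emph{proper} submodule $\langle d_1+\Inn\rangle$ of $\Out(\Z_4[S_3])$.

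The only delicate point, and the step I would treat most carefully, is confirming that the module of central derivations is genuinely smaller than $\Out$, i.e. that $d_2$ is not of central type. This is immediate from the coefficient formulas: by \eqref{d2} the derivation $d_2$ is nonzero only on morphisms $(u,v)$ with $u=e$ and $v$ a transposition, whereas any central derivation $d_\tau^e$ satisfies $(d_\tau^e)_v^u=\tau(v)$ and so is nonzero only when $u=v$. Hence $d_2+\Inn$ lies outside the image of the central derivations, the inclusion $\Z_2\hookrightarrow\Z_2\oplus\Z_2$ is strict, and the two modules are not isomorphic.
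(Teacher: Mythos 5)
Your main argument is correct, and it takes a genuinely different route from the paper's. You compute both $\Z_4$-modules explicitly --- the central derivations form $\{0,d_1\}\cong\Z_2$ (since $Z(S_3)=\{e\}$, $\Hom_{Ab}(S_3,\Z_4)=\{0,\varphi_1\}$, and $2\,d_{\varphi_1}^e=0$), while $\Out(\Z_4[S_3])\cong\Z_2\oplus\Z_2$ by Theorem \ref{finite_result} and the hom-set computations --- and then compare cardinalities: a module with two elements cannot be isomorphic to one with four. The paper never counts elements; instead it shows that the natural map sending a central derivation $d_\tau^e$ to its class $d_\tau^e+\Inn$ cannot hit $d_2+\Inn$, because the characters of $d_\tau^e$ and $d_2$ are nontrivial on loops but are supported in different subgroupoids ($\Gamma_{[e]}$ versus $\Gamma_{[(12)]}$). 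Your counting argument is cleaner for the literal non-isomorphism statement; the paper's argument yields the sharper structural fact that the image of the central derivations in $\Out$ is a proper submodule.

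However, your last paragraph --- precisely the step you flag as delicate --- has a real gap. Comparing coefficient formulas only shows that $d_2\neq d_\tau^e$ \emph{as operators}; it does not show $d_2+\Inn\neq d_\tau^e+\Inn$, since a priori the difference $d_2-d_\tau^e$ could be inner. (Note that this step is not needed for your cardinality comparison, which stands on its own; it is needed only for your final claim that the central derivations land exactly on the proper submodule $\langle d_1+\Inn\rangle$.) To close the gap you need the paper's loop argument: every inner derivation has a character that is trivial on loops, and for finite $G$ one has $\Inn^*=\Inn$ (Corollary \ref{corol-finitegroup}); on the other hand, $\chi_{d_2}$ is supported on loops lying in $\Gamma_{[(12)]}$ while $\chi_{d_\tau^e}$ is supported on loops lying in $\Gamma_{[e]}$, so no cancellation is possible and $\chi_{d_2-d_\tau^e}$ takes the value $2$ on the loop $(e,(12))\in\Hom((12),(12))$. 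Hence $d_2-d_\tau^e\notin\Inn$, which is what the coset statement actually requires.
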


\begin{proof}
In our case, $Z(S_3)=\{e\}\implies$ only operators such as $d_\tau^e : x\mapsto \tau(x)xe=\tau(x)x$ span $ZDer$. The corresponding outer derivation is $d_\tau^e+\Inn$. 

We can see that there is no such homomorphism $\tau$,that $d_\tau^e+\Inn =d_2+\Inn$, because characters of $d_\tau^e$ and $d_2$  are not trivial on loops, but have supports in different subgroupoids.
\end{proof}

\subsection{Example: derivations of $\F_{2^m}D_{2n}$}
\label{ex-F2m-D2n}
Here the $\F_{2^m}$ stands for a finite field of order $2^m$. 

The main goal of this section is to reproduce the result of paper \cite{Creedon-Hughes}. In order to calculate derivations of $\F_{2^m}D_{2n}$, we use several properties of dihedral groups, proofs of which can be found in expository papers by Keith Conrad \cite{Conrad3, Conrad4}.

To begin, the dihedral group of size $2n$ can be described by its presentation:  $$D_{2n}=\langle r,s|r^{2n}=s^2=(rs)^2=1 \rangle$$
The following theorem is proved in \cite{Conrad3}, (see Theorem 4.1).
\begin{theorem}
The group $D_{2n}$ has
  \begin{itemize}
\item two conjugacy classes of size 1:           $$\{1\},\{r^n\},$$
\item n-1 conjugacy classes of size 2: $$\{r,r^{-1}\},\{r^{2},r^{-2}\},\ldots,\{r^{ n-1},r^{- n+1}\},$$
\item two conjugacy classes of size n: $$\{r^{2i}s|0\leq i\leq n-1\},\{r^{2i+1}s|0\leq i\leq n-1\}.$$
  \end{itemize}
  
\end{theorem}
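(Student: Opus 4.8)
The plan is to work directly from the presentation. First I would extract the key relation: since $(rs)^2=1$ and $s^2=1$, one gets $srs^{-1}=r^{-1}$, so conjugation by $s$ inverts $r$. Because $r$ has order $2n$, every element of $D_{2n}$ is uniquely a rotation $r^k$ or a reflection $r^k s$ with $0\le k\le 2n-1$, and each reflection is an involution, since $(r^k s)^2 = r^k (s r^k s) = r^k r^{-k}=1$, so that $(r^k s)^{-1}=r^k s$.

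Next I would compute the conjugacy class of a rotation. Rotations commute, so conjugating $r^k$ by $r^j$ returns $r^k$, while conjugating by $r^j s$ gives $r^j (s r^k s^{-1}) r^{-j}=r^{-k}$. Hence the class of $r^k$ is $\{r^k,r^{-k}\}$. This is a singleton precisely when $r^{2k}=1$, i.e. $2k\equiv 0\pmod{2n}$, i.e. $k\equiv 0\pmod n$, which picks out the two central elements $1$ and $r^n$. For the remaining $k$ the set $\{r^k,r^{-k}\}$ has size $2$, and as $k$ runs over $1,\ldots,n-1$ these pairs are disjoint and exhaust the non-central rotations (the pairs for $k$ and $2n-k$ coincide), producing the $n-1$ classes of size $2$.

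Then I would treat the reflections. Using $s r^{-j}=r^j s$, conjugating $r^k s$ by $r^j$ yields $r^{k+2j}s$, and a direct computation gives $(r^j s)(r^k s)(r^j s)=r^{2j-k}s$. In both formulas the exponent stays in the same residue class modulo $2$. Since $\gcd(2,2n)=2$, the values $2j\bmod 2n$ range over all even residues, so the class of $r^k s$ is exactly the set of reflections $r^{k'}s$ with $k'\equiv k\pmod 2$. This partitions the $2n$ reflections into the even class $\{r^{2i}s\mid 0\le i\le n-1\}$ and the odd class $\{r^{2i+1}s\mid 0\le i\le n-1\}$, each of size $n$.

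Finally I would confirm completeness by the count $2\cdot 1+(n-1)\cdot 2+2\cdot n=4n=|D_{2n}|$, which certifies that no classes have been missed. I expect no genuine obstacle here; the only points demanding care are the bookkeeping ensuring each size-$2$ rotation pair is listed exactly once, and the parity argument showing that conjugation never changes the parity of a reflection's exponent while still reaching every reflection of the same parity.
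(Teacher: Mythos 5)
Your proof is correct and complete. Note that the paper does not actually prove this theorem itself: it cites Conrad's expository note \cite{Conrad3} (Theorem 4.1), and the argument there is essentially the same direct computation from the presentation that you carry out --- the relation $srs^{-1}=r^{-1}$, the conjugation formulas $r^j r^k r^{-j}=r^k$, $(r^js)r^k(r^js)^{-1}=r^{-k}$, $r^j(r^ks)r^{-j}=r^{k+2j}s$, $(r^js)(r^ks)(r^js)^{-1}=r^{2j-k}s$, and the parity argument for the reflections. So your write-up supplies in full the verification the paper delegates to a reference, and the closing count $2\cdot 1+(n-1)\cdot 2+2n=4n=|D_{2n}|$ is a clean way to certify that no classes are missing.
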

As a result, there are $n+3$ conjugacy classes.
  
  Let us choose and fix first element in any of conjugacy classes written above. Now we need to compute centralizers.
  
\begin{proposition} In $D_{2n}$ we get the following centralizers:
  \begin{enumerate}
\item  $Z(r^i)=\{1,r,\ldots, r^{2n-1}\}, 1\leq i\leq n,$
\item $Z(s)=\{1,s\},$
\item $Z(r^is)=\{1,r^i s\}.$
  \end{enumerate}
 \end{proposition}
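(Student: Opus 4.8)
The plan is to reduce every commuting relation to a single congruence modulo $2n$ using the two structural facts about $D_{2n}$. First, the normal form: since $r$ has order $2n$ (note $r^n\neq 1$, because the class theorem above lists $\{r^n\}$ as a nontrivial singleton, so $|D_{2n}|=4n$), every element is uniquely a rotation $r^{j}$ or a reflection $r^{j}s$ with $0\le j\le 2n-1$. Second, the dihedral relation $s r^{j} s^{-1}=r^{-j}$ (equivalently $s r^{j}=r^{-j}s$), which comes from $(rs)^2=1$ and $s^2=1$. These yield the multiplication rules $r^{a}\cdot r^{b}=r^{a+b}$, $\;r^{a}s\cdot r^{b}=r^{a-b}s$, and $r^{a}s\cdot r^{b}s=r^{a-b}$. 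For a fixed element $x$ and a candidate $g$ I would split into the four rotation/reflection cases and read off when $xg=gx$.

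For $Z(r^{i})$, since $\langle r\rangle$ is abelian we get $\langle r\rangle\subseteq Z(r^{i})$ at once, so only reflections are in question: $r^{j}s$ commutes with $r^{i}$ iff $r^{i+j}s=r^{j-i}s$, i.e.\ iff $2i\equiv 0\pmod{2n}$. For $1\le i\le n-1$ this fails for every $j$, giving exactly $Z(r^{i})=\langle r\rangle$. For the reflection centralizers I would run the same bookkeeping: a rotation $r^{j}$ commutes with $r^{i}s$ iff $2j\equiv 0\pmod{2n}$ (so $j\in\{0,n\}$), and a reflection $r^{j}s$ commutes with $r^{i}s$ iff $2(j-i)\equiv 0\pmod{2n}$ (so $j\in\{i,\,i+n\}$). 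A useful independent check is orbit--stabilizer against the class theorem: $|Z(u)|=|G|/|[u]|$ gives $|Z(r^{i})|=4n/2=2n$ for the size-$2$ classes and $|Z(s)|=|Z(r^{i}s)|=4n/n=4$ for the size-$n$ reflection classes.

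The step that deserves the most care, and which I expect to be the main obstacle, is the central element $r^{n}$: because $r^{2n}=1$ we have $s r^{n}s^{-1}=r^{-n}=r^{n}$, so $r^{n}$ commutes with both generators and $Z(G)=\{1,r^{n}\}$ sits inside \emph{every} centralizer. This forces two adjustments to the naive reading. First, $Z(r^{n})=G$ rather than $\langle r\rangle$, so item (1) should be stated for $1\le i\le n-1$ (with $r^{n}$ central). Second, the congruences above give $Z(s)=\{1,r^{n},s,r^{n}s\}$ and $Z(r^{i}s)=\{1,r^{n},r^{i}s,r^{i+n}s\}$, each of order $4$, matching the orbit--stabilizer count $|Z|=4$; thus each reflection centralizer is the order-$4$ subgroup generated by that reflection together with the central rotation $r^{n}$. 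I would therefore record the reflection centralizers as these order-$4$ groups and prove the proposition in that corrected form.
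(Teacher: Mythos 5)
Your computation is correct, and it is exactly the ``direct computation'' that the paper's one-line proof gestures at --- but carried out in full it shows that the proposition, as printed, is \emph{false} for the group the paper actually defines. With the presentation $D_{2n}=\langle r,s\mid r^{2n}=s^2=(rs)^2=1\rangle$ the rotation $r$ has order $2n$, so $sr^{n}s^{-1}=r^{-n}=r^{n}$: the element $r^{n}$ is central and therefore lies in \emph{every} centralizer. Your congruence bookkeeping then gives the correct answer: $Z(r^{i})=\langle r\rangle$ only for $1\le i\le n-1$, while $Z(r^{n})=D_{2n}$; and each reflection centralizer is the Klein four-group $Z(r^{i}s)=\{1,\,r^{n},\,r^{i}s,\,r^{n+i}s\}$, not the order-two group $\{1,r^{i}s\}$. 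Your orbit--stabilizer cross-check is the decisive point showing the error lies in the paper and not in your arithmetic: the theorem quoted from Conrad immediately above the proposition says the reflection classes have size $n$, so $|Z(s)|=4n/n=4$, whereas the printed claim $Z(s)=\{1,s\}$ would force $|[s]|=2n$, contradicting the paper's own class list. (The printed centralizers would be correct for a dihedral group whose rotation subgroup has \emph{odd} order, which is presumably the source of the slip; Conrad's notes, which the paper cites, state the order-four centralizers in the even case.)

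It is worth recording that the correction propagates into the rest of the example, since the proposition feeds directly into the hom-set computation: $\Hom_{Ab}(Z(s),\F_{2^m})\cong\Hom_{Ab}(\Z_2\times\Z_2,\F_{2^m})\cong\F_{2^m}^{2}$ rather than $\Hom_{Ab}(\Z_{2},\F_{2^m})\cong\F_{2^m}$, and the classes $[1]$ and $[r^{n}]$ have centralizer all of $D_{2n}$, so each contributes $\Hom_{Ab}(D_{2n}/[D_{2n},D_{2n}],\F_{2^m})\cong\Hom_{Ab}(\Z_2\times\Z_2,\F_{2^m})\cong\F_{2^m}^{2}$ rather than $\Hom_{Ab}(\Z_{2n},\F_{2^m})\cong\F_{2^m}$. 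Plugging your centralizers into the paper's own decomposition theorem gives $\Out(\F_{2^m}D_{2n})\cong\F_{2^m}^{\,n+7}$ in place of the $n+3$ copies implied by the displayed corollary. Your corrected statement, with item (1) restricted to $1\le i\le n-1$ and the reflection centralizers taken to be the order-four subgroups above, is the version that should stand, and your proof of it is complete.
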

  \begin{proof}



 The proposition is proved by direct computation
  \end{proof}
 
Now we are able to describe additive homomorphisms from centralizers to $\F_{2^m}$.

  \begin{enumerate}

\item  $\Hom_{Ab}(Z(r^i),\F_{2^m})=\Hom_{Ab}(\Z_{2n},\F_{2^m}), 1\leq i\leq n$;
\item $\Hom_{Ab}(Z(s))=\Hom_{Ab}(\Z_{2},\F_{2^m})$;
\item $\Hom_{Ab}(Z(r^is),\F_{2^m})=\Hom_{Ab}(\Z_{2},\F_{2^m})$.
  \end{enumerate}
  
 \begin{corollary}
 Using the decomposition theorems above,
\begin{enumerate}
\item  \begin{equation}
 \Out(\F_{2^m}D_{2n})\cong \bigoplus_{i=1}^{n+1} \Hom_{Ab}(\Z_{2n},\F_{2^m}) \oplus  \bigoplus_{i=1}^{2}\Hom_{Ab}(\Z_{2},\F_{2^m}),
 \end{equation}
\item  \begin{equation}
 \Inn(\F_{2^m}D_{2n})\cong \F_{2^m}^{4n-(n+3)}=\F_{2^m}^{3n-3}.
 \end{equation}
\end{enumerate}
\end{corollary}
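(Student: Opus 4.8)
The plan is to obtain both isomorphisms by feeding the conjugacy-class and centralizer data of $D_{2n}$ into Theorem~\ref{finite_result}, so that essentially all the real content lives in the preceding structural theorems. For the outer part I would start from Theorem~\ref{finite_result}(2), which already gives
\[
\Out(\F_{2^m}D_{2n})\cong\bigoplus_{[u]\in G^G}\Hom_{Ab}(Z(u),\F_{2^m}).
\]
By the dihedral conjugacy-class theorem there are $n+3$ classes, which split into $n+1$ rotation classes and $2$ reflection classes; by the centralizer proposition the rotation classes have centralizer $\langle r\rangle\cong\Z_{2n}$ and the two reflection classes have centralizer of order $2$, i.e. $\cong\Z_2$. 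Collecting the summands of the direct sum according to these two types is then pure bookkeeping and yields exactly the claimed decomposition into $n+1$ copies of $\Hom_{Ab}(\Z_{2n},\F_{2^m})$ and $2$ copies of $\Hom_{Ab}(\Z_2,\F_{2^m})$.

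For the inner part I would invoke the dimension formula $\dim_{\F_{2^m}}\Inn(A[G])=|G|-|G^G|$ established earlier for group rings over a field. Reading the group order off the class sizes gives $|G|=2\cdot 1+(n-1)\cdot 2+2\cdot n=4n$, while $|G^G|=n+3$, so the rank is $4n-(n+3)=3n-3$. Since $\F_{2^m}$ is a field, $\Inn$ is a free module by Corollary~\ref{inner_derivations}, and a free module of rank $3n-3$ over $\F_{2^m}$ is $\F_{2^m}^{3n-3}$, giving part~2.

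The step I would scrutinize most carefully, and which I regard as the main obstacle, is the centralizer computation for the two singleton rotation classes $\{1\}$ and $\{r^n\}$: both the identity and the half-turn $r^n$ are central in $D_{2n}$, so their centralizers are the whole group rather than $\langle r\rangle$, and a priori $\Hom_{Ab}(D_{2n},\F_{2^m})$ (which factors through the abelianization $D_{2n}/[D_{2n},D_{2n}]$) need not coincide with $\Hom_{Ab}(\Z_{2n},\F_{2^m})$. I would therefore re-examine the preceding proposition on these two classes before collapsing all $n+1$ rotation summands into copies of $\Hom_{Ab}(\Z_{2n},\F_{2^m})$. Once the centralizers are fixed, the rest is a characteristic-$2$ evaluation: the defining relation $2n\cdot\varphi(g)=0$ for an element of $\Hom_{Ab}(\Z_{2n},\F_{2^m})$ holds automatically over a field of characteristic $2$, so $\Hom_{Ab}(\Z_{2n},\F_{2^m})\cong\F_{2^m}$, and likewise $\Hom_{Ab}(\Z_2,\F_{2^m})\cong\F_{2^m}$, which is what ties the outer description back to the explicit inner rank.
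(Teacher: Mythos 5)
Your part~2 is correct and is exactly the paper's argument: $\Inn$ is free by Corollary~\ref{inner_derivations}, and its rank is $|G|-|G^G|=4n-(n+3)=3n-3$. For part~1 you follow the same route as the paper (feed the conjugacy-class and centralizer data of $D_{2n}$ into Theorem~\ref{finite_result}(2)), but your proposal stops at the point where you promise to ``re-examine'' the centralizers of the two singleton classes $\{1\}$ and $\{r^n\}$ --- and that re-examination cannot come out in favor of the displayed formula. Your suspicion is justified: since $r^n$ has order $2$ and $sr^ns^{-1}=r^{-n}=r^n$, both $1$ and $r^n$ are central, so $Z(1)=Z(r^n)=D_{2n}$, and $\Hom_{Ab}(D_{2n},\F_{2^m})$ factors through $D_{2n}/[D_{2n},D_{2n}]=D_{2n}/\langle r^2\rangle\cong\Z_2\times\Z_2$ (the index is $4n/n=4$ and every generator of the quotient has order $2$). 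In characteristic $2$ this gives $\Hom_{Ab}(\Z_2\times\Z_2,\F_{2^m})\cong\F_{2^m}^2$, not $\Hom_{Ab}(\Z_{2n},\F_{2^m})\cong\F_{2^m}$. So the step you flagged is not a gap you failed to close: it is an error in the paper's centralizer proposition and in the corollary itself, and a complete argument must overturn part~1 rather than reproduce it.

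What you did not catch is that the two reflection classes suffer from the same defect, and there you accepted the paper's value $Z(s)\cong\Z_2$ at face value. Since $r^n$ is central it lies in every centralizer, and orbit--stabilizer (using the paper's own class sizes) forces $|Z(s)|=|D_{2n}|/|[s]|=4n/n=4$; indeed $Z(s)=\{1,s,r^n,r^ns\}\cong\Z_2\times\Z_2$, and similarly for $Z(rs)$, so each reflection class also contributes $\F_{2^m}^2$. The corrected decomposition reads
\begin{equation*}
\Out(\F_{2^m}D_{2n})\;\cong\;\F_{2^m}^{2}\oplus\F_{2^m}^{2}\oplus\Bigl(\bigoplus_{i=1}^{n-1}\F_{2^m}\Bigr)\oplus\F_{2^m}^{2}\oplus\F_{2^m}^{2}\;\cong\;\F_{2^m}^{\,n+7},
\end{equation*}
whereas the stated corollary (and your bookkeeping, once all rotation summands are collapsed to $\Z_{2n}$ and reflection summands to $\Z_2$) gives only $\F_{2^m}^{\,n+3}$. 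A sanity check: for $n=1$ we have $D_2\cong\Z_2\times\Z_2$ and $\F_{2^m}[D_2]\cong\F_{2^m}[x,y]/(x^2,y^2)$, whose derivations are freely determined by the pair $(d(x),d(y))$ in characteristic $2$, so $\dim\Der=8=n+7$ and $\Inn=0$, while the stated formula predicts $4$. In short: your instinct about where the danger lies was exactly right, but the proof as written does not resolve it, misses its second occurrence, and the isomorphism in part~1 is false as stated; only part~2 survives.
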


 The lase formula can be explained in terms of number of conjugacy classes: the dimension of $\Inn$ is equal to number of elements in $D_{2n}$ minus number of conjugacy classes.






\begin{thebibliography}{50}

\bibitem{Artemovych_2}
O.~D. Artemovych.
\newblock Derivation rings of {L}ie rings.
\newblock {\em S\~{a}o Paulo J. Math. Sci.}, 13(2):597--614, 2019.


\bibitem{Artemovych_5}
O. D.~Artemovych, Victor A.~Bovdi, Mohamed A.~Salim
\newblock Derivations of group rings
\newblock {\em Acta Sci. Math. (Szeged) 86:1-2(2020), 51-72
664/2019}

\bibitem{Arutyunov_1} {\sf A. A. Arutyunov , A. S. Mishchenko,  A. I. Shtern:  }
{\it Derivations of group algebras.}
{\rm  Fundam. Prikl. Mat., \textbf{21}(6), 65-78 (2016)}{\it  arxiv: 1708.05005}

\bibitem{Arutyunov_2} {\sf A. A. Arutyunov : }
{\it Derivation Algebra in Noncommutative Group Algebras}.{\rm  Proc. Steklov Inst. Math. \textbf{308}, 22-34 (2020). (Engl. Tran. from Russian)}

\bibitem{Arutyunov_3}   
 {\sf  A. A.Arutyunov,A. S. Mishchenko : }   {\it A smooth version of Johnson's problem on derivations of group algebras.,}{\it  arxiv : 1801.03480 }
       {\rm Sb. Math. \textbf{210}(6), 756-782  (2019) }
\bibitem{Arutyunov_4}
 {\sf   A. A. Arutyunov, A. V. Alekseev : }  
 {\it  Complex of $n$-categories and derivations in group algebras.,}
{\rm  Topology and its Applications, \textbf{275}, 107002 (2020),} {\it  arxiv: 1808.07828}

\bibitem{Arutyunov_5}
 {\sf  Aleksandr Alekseev, Andronick Arutyunov, Sergei Silvestrov : }  
 {\it  On $(\sigma,\tau)$-derivations of group algebra as
category characters,}
{\rm } {\it  arxiv: 2008.00390}

\bibitem{Bai_Meng_He}
C.~Bai, D.~Meng, and S.~He.
\newblock Derivations on {N}ovikov algebras.
\newblock {\em Internat. J. Theoret. Phys.}, 42(3):507--521, 2003.

\bibitem{Bavula_1}
V.~V. Bavula.
\newblock Derivations and skew derivations of the {G}rassmann algebras.
\newblock {\em J. Algebra Appl.}, 8(6):805--827, 2009.

\bibitem{Bavula_2}
V.~V. Bavula.
\newblock The group of automorphisms of the {L}ie algebra of derivations of a
  polynomial algebra.
\newblock {\em J. Algebra Appl.}, 16(5):1750088, 8, 2017.

\bibitem{Bovdi_book}
A.~A. Bovdi.
\newblock Group rings. ({R}ussian).
\newblock {\em Kiev.UMK VO}, page 155, 1988.

\bibitem{Boucher-Ulmer}
{\sf Delphine Boucher, Félix Ulmer}
{\it Linear codes using skew polynomials with automorphisms and derivations,}
{\rm Designs, Codes and Cryptography, Springer Verlag, Volume 70, Number 3, 2014, Pages 405-431}

\bibitem{Brakenhof}   by
{\sf Jos Brakenhof, }   {\it The representation ring and the center of the group ring} 
{\rm  Master’s thesis}
\url{https://www.math.leidenuniv.nl/scripties/Brakenhoff.pdf}


\bibitem{Burkov}
V.~D. Burkov.
\newblock Derivations of group rings.
\newblock In {\em Abelian groups and modules}, pages 46--55, 246--247. Tomsk.
  Gos. Univ., Tomsk, 1981.
  
  \bibitem{Burkov_2}
V.~D. Burkov.
\newblock Derivations of polynomial rings.
\newblock {\em Vestnik Moskov. Univ. Ser. I Mat. Mekh.}, (2):51--55, 87, 1981.

\bibitem{Conrad1}   by
    {\sf Keith Conrad, }   {\it Modules over PID,} 
       {\rm expository note,  }
	\url{https://kconrad.math.uconn.edu/blurbs/linmultialg/modulesoverPID.pdf}


\bibitem{Conrad3}   by
    {\sf Keith Conrad, }   {\it Dihedral groups,} 
    {\rm  expository note, }
	\url{https://kconrad.math.uconn.edu/blurbs/grouptheory/dihedral.pdf}
\bibitem{Conrad4}   by
    {\sf Keith Conrad, }   {\it Dihedral groups II,} 
    {\rm  expository note, }
	\url{https://kconrad.math.uconn.edu/blurbs/grouptheory/dihedral2.pdf}

\bibitem{Creedon-Hughes}
{\sf Leo Creedon, Kieran Hughes,}
{\it Derivations on Group Algebras with Coding Theory Applications,}
{\rm Finite Fields and Their Applications, Volume 56, March 2019, Pages 247-265}


\bibitem{Dales} {Dales, H. G.: Banach algebras and automatic continuity. Clarendon Press, Oxford University Press (2000)}

\bibitem{Dewan}
    {\sf Ian Dewan, }   {\it Graph homology and cohomology} 
       {\rm 2016,  }

\bibitem{Ferrero_Giambruno_Milies}
M.~Ferrero, A.~Giambruno, and C.~Milies.
\newblock A note on derivations of group rings.
\newblock {\em Canad. Math. Bull.}, 38(4):434--437, 1995.

\bibitem{Fuchs_book}
L.~Fuchs.
\newblock {\em Infinite abelian groups. {V}ol. {II}}.
\newblock Academic Press, New York-London, 1973.
\newblock Pure and Applied Mathematics. Vol. 36-II.

\bibitem{Herrmanns}   by
    {\sf Christian Herrmanns Heimseite, }   {\it Modules and Presentations,} 
    {\rm  }
	\url{http://www2.mathematik.tu-darmstadt.de/~herrmann/alg2b.pdf}

\bibitem{Herstein_book}
I.~N. Herstein.
\newblock {\em Noncommutative rings}.
\newblock The Carus Mathematical Monographs, No. 15. Published by The
  Mathematical Association of America; distributed by John Wiley \& Sons, Inc.,
  New York, 1968.
   
\bibitem{Jacobson} 
    {\sf Nathan Jacobson,}   {\it Lie Algebras,}
       {\rm Dover Publications, Sep 16, 2013, ISBN: 9780486136790, 516 pages }

\bibitem{Johnson-2001}
    {\sf B. E. Johnson,}
    {\it The Derivation Problem for Group Algebras of Connected Locally Compact Groups,}
    {\rm J. London Math. Soc., V. 63, Issue 2, (2001), p. 441--452}

\bibitem{Kaygorodov_Popov_1}
I.~Kaygorodov and Y.~Popov.
\newblock A characterization of nilpotent nonassociative algebras by invertible
  {L}eibniz-derivations.
\newblock {\em J. Algebra}, 456:323--347, 2016.


\bibitem{Artemovych_4}
A.~Al~Khalaf, O.~D. Artemovych, and I.~Taha.
\newblock Rings with simple {L}ie rings of {L}ie and {J}ordan derivations.
\newblock {\em J. Algebra Appl.}, 17(4):1850078, 11, 2018.

\bibitem{Kharchenko_1}
V.~K. Kharchenko.
\newblock {\em Automorphisms and derivations of associative rings}, volume~69
  of {\em Mathematics and its Applications (Soviet Series)}.


\bibitem{Kiss_Laczkovich}
G.~Kiss and M.~Laczkovich.
\newblock Derivations and differential operators on rings and fields.
\newblock {\em Ann. Univ. Sci. Budapest. Sect. Comput.}, 48:31--43, 2018.

\bibitem{Lanski}
C.~Lanski.
\newblock Subgroups and conjugates in semi-prime rings.
\newblock {\em Math. Ann.}, 192:313--327, 1971.

\bibitem{Lanski_Montgomery}
C.~Lanski and S.~Montgomery.
\newblock Lie structure of prime rings of characteristic {$2$}.
\newblock {\em Pacific J. Math.}, 42:117--136, 1972.

\bibitem{Losert-2008}
  {\sf V.~Losert,}
  {\it The derivation problem for group algebras,}
  {\rm Annals of Mathematics , 168 (2008), 221-246}

\bibitem{lyndon-schupp-1980}
  {\sf R.~S.~Lyndon and P.~E.~Schupp,}
  {\it Combinatorial group theory.}
  {\rm Springer-Verlag, Berlin, 2001.- 339 pp.}

\bibitem{Mis} Mishchenko, A.S. Correlation between the Hochschild Cohomology and the Eilenberg--MacLane Cohomology of Group Algebras from a Geometric Point of View. Russ. J. Math. Phys. 27, 236--250 (2020).

\bibitem{Robinson_book}
D.~J.~S. Robinson.
\newblock {\em A course in the theory of groups}, volume~80 of {\em Graduate
  Texts in Mathematics}.
\newblock Springer-Verlag, New York, second edition, 1996.

\bibitem{Spiegel}
E.~Spiegel.
\newblock Derivations of integral group rings.
\newblock {\em Comm. Algebra}, 22(8):2955--2959, 1994.

\bibitem{Smith}
M.~K. Smith.
\newblock Derivations of group algebras of finitely-generated, torsion-free,
  nilpotent groups.
\newblock {\em Houston J. Math.}, 4(2):277--288, 1978.



\end{thebibliography}
\end{document}